\theoremstyle{plain}
\newtheorem{theorem}{Theorem}[section]
\newtheorem{proposition}[theorem]{Proposition}
\newtheorem{lemma}[theorem]{Lemma}
\theoremstyle{definition}
\newtheorem{definition}[theorem]{Definition}
\newtheorem{assumption}[theorem]{Assumption}
\theoremstyle{remark}
\newtheorem{remark}[theorem]{Remark}
\icmltitlerunning{Minimizing Upper Confidence Bounds}
\newcommand{\bD}{\mathbb{D}}
\newcommand{\bE}{\mathbb{E}}
\newcommand{\bN}{\mathbb{N}}
\newcommand{\bQ}{\mathbb{Q}}
\newcommand{\bR}{\mathbb{R}}
\newcommand{\bU}{\mathbb{U}}
\newcommand{\bV}{\mathbb{V}}
\newcommand{\bZ}{\mathbb{Z}}
\newcommand{\wpo}{\text{w.p.1}}
\newcommand{\cC}{\mathcal{C}}
\newcommand{\cD}{\mathcal{D}}
\newcommand{\cE}{\mathcal{E}}
\newcommand{\cF}{\mathcal{F}}
\newcommand{\cI}{\mathcal{I}}
\newcommand{\cJ}{\mathcal{J}}
\newcommand{\cK}{\mathcal{K}}
\newcommand{\cN}{\mathcal{N}}
\newcommand{\cO}{\mathcal{O}}
\newcommand{\cS}{\mathcal{S}}
\newcommand{\cU}{\mathcal{U}}
\newcommand{\cX}{\mathcal{X}}
\newcommand{\fB}{\mathfrak{B}}
\newcommand{\fL}{\mathfrak{L}}
\newcommand{\fM}{\mathfrak{M}}
\newcommand{\fS}{\mathfrak{S}}
\newcommand{\fV}{\mathfrak{V}}
\newcommand{\dP}{\mathds{P}}
\newcommand{\dQ}{\mathds{Q}}
\begin{document}

\twocolumn[
\icmltitle{Minimizing Upper Confidence Bounds: A Data-Driven Framework for Stochastic Programming}



\icmlsetsymbol{equal}{*}

\begin{icmlauthorlist}
\icmlauthor{Shixin Liu}{yyy}
\icmlauthor{Ming Gao}{yyy}
\icmlauthor{Jian Hu}{yyy}
\end{icmlauthorlist}

\icmlaffiliation{yyy}{Department of Industrial \& Manufacturing System Engineering, University of Michigan-Dearborn, 4901 Evergreen Rd.,Dearborn, 48128, MI, USA}

\icmlcorrespondingauthor{Jian Hu}{jianhu@umich.edu}

\icmlkeywords{Stochastic Programming, Epistemic Uncertainty, UCB Optimization, Asymptotic Convergence, Bootstrap Sampling Approximation, Two-Stage Stochastic Programming with Random Recourse}

\vskip 0.3in
]



\printAffiliationsAndNotice{}  

\begin{abstract}
Stochastic programming is often challenged by epistemic uncertainty, where critical probability distributions are poorly characterized or unknown due to a lack of data. To address this, we pioneer a novel framework for stochastic programming that minimizes an upper confidence bound (UCB) on the expected random cost, acting as a robustness-seeking strategy. Our central contribution is the Average Percentile Upper Bound (APUB), a new statistical construct that serves as both a statistically rigorous upper bound for population means and an approximate risk metric for sample means. We rigorously prove the asymptotic correctness and consistency of APUB, establishing a reliable foundation for data-driven decision-making. We also develop practical solution methods, including a bootstrap sampling approximation method and an L-shaped method, to solve APUB optimization problems, with a specific focus on two-stage linear stochastic optimization with random recourse. Empirical demonstrations on a two-stage product mix problem reveal the significant benefits of our APUB optimization framework, which fortifies the process against epistemic uncertainty while reinforcing key decision-making attributes like reliability and consistency. The implementation and source code are available at \url{https://github.com/8Wings/APUB-Optimization}.
\end{abstract}

\section{Introduction}
\label{sec:intro}

Optimizing decisions under uncertainty is central to modern decision-making. Classical stochastic programming provides a principled framework, but it typically assumes that the probability distribution governing uncertain parameters is known. In practice, the distribution is rarely available with full accuracy, and ignoring this \emph{epistemic uncertainty}---uncertainty due to limited data or imperfect modeling---can yield decisions that are overly optimistic and fragile when deployed.

To mitigate epistemic uncertainty, the optimization literature has developed robust and distributionally robust optimization (DRO) frameworks that seek solutions performing well under distributional ambiguity. In this paper, we take a complementary route: rather than optimizing under an explicit ambiguity set, we integrate \emph{statistical inference} directly into stochastic optimization through \emph{upper confidence bound (UCB) minimization}. The key idea is to replace the unknown objective value with an upper confidence bound constructed from data, thereby selecting decisions that remain reliable under estimation error.

Consider the expectation minimization problem
\begin{align} \label{mod:orginal_SP}
\min_{x \in \cX} \bE [F(x, \xi) ], \tag{EM-M}
\end{align}
where $\cX$ is the feasible set, $\xi$ follows an unknown distribution $\dP$, and $F$ is a cost function. Given an i.i.d.\ sample of size $N$ and its empirical distribution $\widehat{\dP}_N$, the Sample Average Approximation (SAA) model
\begin{align} \label{mod:SAA}
\min_{x \in \cX} \bE \left[F(x, \xi) \,\left|\, \widehat{\dP}_N \right. \right], \tag{SAA-M}
\end{align}
is a natural estimator of \eqref{mod:orginal_SP}. While effective for large $N$, SAA can be biased and unstable in small-sample regimes. We therefore propose to optimize a $100(1-\alpha)\%$ UCB for $\bE[F(x,\xi)]$ constructed from $\widehat{\dP}_N$. Denoting this bound by
$\bU^{\alpha}\!\left[\bE[F(x,\xi)] \mid \widehat{\dP}_N\right]$,
we study the UCB optimization model
\begin{align} \label{mod:CUB-SP}
\min_{x \in \cX} \bU^{\alpha} \left[ \bE [F(x, \xi) ] \left| \widehat{\dP}_N \right. \right]. \tag{UCB-M}
\end{align}
Model \eqref{mod:CUB-SP} seeks decisions that are not only good under the point estimate but also robust to sampling variability by controlling an upper confidence guarantee on the expected cost.

\subsection{Related Work}

\textbf{DRO and UCB optimization.}
Epistemic uncertainty in stochastic optimization is commonly addressed via distributionally robust optimization (DRO) \cite{rahimian_2019_distributionally, lin_2022_distributionally}. Unlike Bayesian methods that typically assume a parametric model, DRO seeks decisions that perform well over a set of plausible data-generating distributions rather than a single estimated distribution. Two canonical ambiguity-set constructions are: (a) moment-based sets that constrain low-order moments \cite{calafiore_distributionally_2006, delage_distributionally_2010, wiesemann_distributionally_2014}, and (b) discrepancy-based sets defined by a statistical distance to a nominal distribution, e.g., $\phi$-divergences \cite{Read1988GoodnessOfFitSF, ben-tal_robust_2013, bayraksan_data-driven_2015} and Wasserstein balls \cite{mohajerin_esfahani_data-driven_2018, blanchet_quantifying_2019, xie_tractable_2020, duque_2022_distributionally, gao_distributionally_2023}. Wasserstein DRO is especially popular due to its finite-sample guarantees and asymptotic consistency.

In parallel, UCB-style methods incorporate inference directly into the optimization objective. UCB maximization is a standard tool for exploration in reinforcement learning, including multi-armed bandits \cite{Slivkins_Introduction_2019} and sample-efficient variants of Q-learning \cite{Wang_Q-learning_2020}, and is also used in Bayesian optimization as a principled acquisition rule \cite{fan_minimizing_2024}. Our setting differs: rather than using UCBs to select informative actions, we use \emph{UCB minimization} as a robustness-seeking criterion in stochastic programming. Specifically, the model in \ref{mod:CUB-SP} selects decisions by minimizing an upper confidence bound of the objective, targeting solutions that are less sensitive to distributional/estimation uncertainty and thus better aligned with worst-case performance under limited data.


\textbf{Statistical confidence interval.}
Classical one- and two-sided confidence intervals have well-understood large-sample properties. In particular, first-order accuracy corresponds to coverage error $O(N^{-1/2})$, while second-order accuracy corresponds to $O(N^{-1})$ \cite{vaart1998}. This asymptotic calibration, together with a standardized interpretation of the nominal level, makes statistical upper bounds attractive building blocks for robust-yet-interpretable optimization objectives.
A standard frequentist construction uses the sample mean with a normal approximation (often justified for moderate-to-large $N$ via the CLT) \cite{devore_probability_2009, hazra_using_2017}. Bootstrap methods provide nonparametric alternatives: Efron’s percentile interval uses quantiles of the bootstrap sampling distribution \cite{efron_nonparametric_1981} and is typically first-order accurate, while the BC\textsubscript{a} interval can achieve second-order accuracy through bias and acceleration corrections \cite{efron_better_1987}. In machine learning, upper confidence bounds are frequently derived from concentration inequalities, including Hoeffding-type bounds \cite{auer_finite-time_2002}, empirical Bernstein bounds that adapt to sample variance \cite{mnih_empirical_2008}, and self-normalized bounds designed for sequential/auto-correlated data \cite{abbasi-yadkori_improved_2011}. While broadly applicable, such bounds can be difficult to optimize (e.g., non-smooth/non-convex) or overly conservative, motivating data-driven UCB constructions that better balance statistical validity and computational tractability in optimization.

\subsection{Major Contributions} 
(i) \textit{UCB minimization for stochastic programming.}
    We propose a novel UCB minimization framework, \ref{mod:CUB-SP}, to systematically address epistemic uncertainty in stochastic programming. Unlike the UCB maximization typically employed for exploration in reinforcement learning, our criterion is robustness-seeking. By minimizing a UCB on the expected cost, the framework favors decisions that are resilient to estimation errors and aligned with reliable performance under data scarcity.
    (ii) \textit{Average Percentile Upper Bound (APUB).}
    We introduce the APUB, a novel statistic that serves a dual role as both a UCB for the population mean and an approximate risk measure for the sample mean. After establishing its asymptotic correctness and consistency, we embed it into \ref{mod:CUB-SP} to create a data-driven model with parameter transparency. There are two major advantages as follows. a) Statistical Interpretation: Unlike DRO, where selecting a statistically meaningful uncertainty radius remains a significant challenge, APUB’s uncertainty knob is directly tied to a frequentist confidence level $1-\alpha$ (e.g., 95\%). b) Automatic Convergence: APUB achieves asymptotic convergence for any fixed $\alpha$, bypassing the manual, case-specific tuning required in DRO---where radii must often decay at specific, non-trivial rates to avoid extreme over-conservatism or loss of consistency. APUB instead leverages bootstrap distribution contraction to naturally adapt as the sample size increases.  
    (iii) \textit{Efficient algorithms via bootstrap sampling approximation.}
    We develop a practical solution method for APUB optimization using bootstrap sampling, specifically tailored for two-stage linear models with random recourse. In this challenging setting, DRO often lacks a tractable dual reformulation, leading to NP-hard inner maximization. Our approach extends the L-shaped method and leverages a unique structural property of the APUB to accelerate second-stage computations via sorting. This enables the framework to scale efficiently even with large bootstrap ensembles.

\section{Average Percentile Upper Bound} \label{sec:UpperBound}
We establish formal definitions for the key concepts employed in this paper. Consider an induced probability space \( (\Xi, \fB, \dP) \), where $\Xi \subseteq \bR^{d_\xi}$ is the support of a random vector, $\fB$ is the Borel \(\sigma\)-algebra, and $\dP$ is a probability measure. Let \( (\xi_1,\dots,\xi_N) \sim \dP \) indicate an independent and identically distributed (i.i.d) random sample with a size of $N$ generated from \( (\Xi, \fB, \dP) \). The empirical distribution associated with the random sample is represented as
\(
    \widehat{\dP}_N := \frac{1}{N} \sum_{n=1}^N \delta_{\xi_n},
\)
where \( \delta_{\xi_n} \) is the Dirac delta function at \( \xi_n \). As $N$ increases to infinity, we have a sample path \( (\xi_1, \xi_2, \dots) \). 
Without loss of generality, we ignore the decision variable $x$ and focus our discussion on a measurable cost function $F: \Xi \to \bR$ in this section. Denote by $\mu := \bE[F(\xi)]$ the population mean and by $\sigma^2 := \bE [(F(\xi) - \mu)^2 ]$ the population variance. We assume $\mu$ and $\sigma$ to be finite in the subsequent analysis. Let
$\widehat \mu_N := \bE [F(\zeta)|{\widehat \dP_N}]$ be the sample mean and $\widehat \sigma_N^2 := \bE [(F(\zeta) - \widehat \mu_N)^2|{\widehat \dP_N}]$ be the asymptotic sample variance, where  $\zeta \sim \widehat \dP_N$.

Using the bootstrap percentile method, \citet{efron_nonparametric_1981} presents a $100(1-\alpha)\%$ bootstrap-based UCB for $\mu$ as 
\begin{align*}
& \bU_{\text{\tiny Efron}}^\alpha [ \mu | \widehat \dP_N ] := \\
& \inf \left\{ t \in \mathbb{R} : \ \Pr\left( \left. \frac{1}{N} \sum_{n=1}^N F(\zeta_n) \le t \; \right|\;  \widehat \dP_N  \right) \ge 1-\alpha \right\}.
\end{align*}
While this percentile-based UCB is satisfactory in many statistical analyses, it is non-convex and difficult to control/optimize for highly skewed distributions, which are regarded as inferior properties in the realm of optimization. Therefore, we extend Efron's UCB by averaging over the values to the right of the $100(1-\alpha)$-th percentile.

\begin{definition} \label{defn:APUB}
The \textit{average percentile} upper bound for $\mu$ with a nominal level $(1- \alpha)$ is denoted as
\begin{align}
     \bU_{\text{\tiny APUB}}^\alpha [ \mu |\widehat \dP_N ]  :=  \frac{1}{\alpha} \int_0^\alpha  \bU_{\text{\tiny Efron}}^\tau [ \mu |\widehat \dP_N ] d \tau.  \tag{APUB} \label{APUB} 
\end{align}
\end{definition}
We can interpret Efron's UCB, alternatively in the realm of risk management and decision making, as an approximation of the Value at Risk (VaR) of $\widehat \mu_N$ by substituting $\widehat \dP_N$ for the true distribution $\dP$ of the random variable $\xi_n$
in   
$
    \text{VaR}_\alpha (\widehat \mu_N) = \inf \left\{ t \in \mathbb{R}  : \Pr \left( \frac{1}{N} \sum_{n =1}^N F(\xi_n) \le t \right) \ge 1-\alpha \right\}.
$
Analogously, \ref{APUB} approximates the  Conditional Value at Risk (CVaR) of $\widehat \mu_N$,
$
    \text{CVaR}_\alpha (\widehat{\mu}_N) = \frac{1}{\alpha} \int_0^\alpha  \text{VaR}_\tau (\widehat \mu_N) d \tau.
$
\ref{APUB} serves a dual purpose: as an upper bound for the population mean in statistics and as an approximate risk measure for the sample mean in risk assessment. As a risk measure, it primarily focuses on approximating the tail distribution of the potential estimation error of the population mean, which could result from an inadequacy of sample points. Furthermore, \ref{APUB} complies with fundamental properties of a coherent risk measure, such as sub-additivity, homogeneity, convexity, translational invariance, and monotonicity. These characteristics make \ref{APUB} a good candidate to be applied to stochastic optimization under epistemic uncertainty, particularly in scenarios requiring solvability, such as two-stage stochastic optimization with random recourse. Analogous to Theorem 1 in \cite{rockafellar_optimization_2000}, 
the following proposition provides an alternative representation for \ref{APUB}.

\begin{proposition} \label{prop:APUB-DEF-OPT}
    \begin{align*} 
      &\bU_{\text{\tiny APUB}}^\alpha [ \mu |\widehat \dP_N ] \\& = \inf_{t\in \bR} \  \left\{  t + \frac{1}{\alpha}  \bE \left[  \left[ \left. \frac{1}{N} \sum_{n = 1}^N F({\zeta}_n) - t \right]_+ \ \right| \ \widehat \dP_N \right]\right\}  \\
      & =  
     \inf_{t\in \bR} \ \left\{ t + \frac{1}{\alpha} \bigintssss  \left[ \frac{1}{N} \sum_{n = 1}^N F({\zeta}_n) - t \right]_+ \prod_{n=1}^N \widehat \dP_N(d {\zeta}_n) \right\}, 
    \end{align*}
    where the bold integral symbol means an $N$-fold integral over the $N$-fold product of $\widehat \dP_N$.
\end{proposition}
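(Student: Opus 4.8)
The plan is to recognize \ref{APUB} as the Conditional Value at Risk of the bootstrapped sample mean and then invoke the Rockafellar--Uryasev minimization formula. Throughout, I would fix the empirical distribution $\widehat \dP_N$ (equivalently, condition on the sample path) and set $Z := \frac{1}{N} \sum_{n=1}^N F(\zeta_n)$ with $\zeta_1,\dots,\zeta_N \sim \widehat \dP_N$ i.i.d. Since $\widehat \dP_N$ is supported on the finite set $\{\xi_1,\dots,\xi_N\}$, the variable $Z$ takes only finitely many values and is in particular bounded, so every expectation below is finite and the integral defining \ref{APUB} is well posed.

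First I would observe that Efron's bound is exactly a quantile of $Z$. Comparing the definition of $\bU_{\text{\tiny Efron}}^\tau[\mu|\widehat\dP_N]$ in Proposition \ref{prop:Efron} with the VaR equation stated before Definition \ref{defn:APUB}, we get, for every $\tau \in (0,\alpha]$,
\begin{align*}
    \bU_{\text{\tiny Efron}}^\tau[\mu|\widehat\dP_N]
    = \inf\left\{ t \in \bR : \ \Pr\left(Z \le t \mid \widehat\dP_N\right) \ge 1-\tau \right\}
    = \text{VaR}_\tau\left(Z \mid \widehat\dP_N\right).
\end{align*}
Plugging this into Definition \ref{defn:APUB} and using the integral definition of CVaR recalled in this section gives
\begin{align*}
    \bU_{\text{\tiny APUB}}^\alpha[\mu|\widehat\dP_N]
    = \frac{1}{\alpha}\int_0^\alpha \text{VaR}_\tau\left(Z \mid \widehat\dP_N\right) d\tau
    = \text{CVaR}_\alpha\left(Z \mid \widehat\dP_N\right).
\end{align*}

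It then remains to apply the variational characterization of CVaR. Conditioning on $\widehat\dP_N$ and applying the Rockafellar--Uryasev formula (Theorem~1 of \cite{rockafellar_optimization_2000}, restated as \Cref{thm:CVaR-defn}) to the bounded variable $Z$ yields
\begin{align*}
    \text{CVaR}_\alpha\left(Z \mid \widehat\dP_N\right)
    = \inf_{t \in \bR} \left\{ t + \frac{1}{\alpha}\, \bE\left[ \left(Z - t\right)_+ \mid \widehat\dP_N \right] \right\},
\end{align*}
which, after writing $Z$ back out, is the first displayed equality of the proposition. The second equality is then only a matter of unfolding the conditional expectation: because $\zeta_1,\dots,\zeta_N$ are i.i.d.\ draws from $\widehat\dP_N$, their joint law is the product measure, so $\bE[(Z-t)_+\mid\widehat\dP_N]$ is by definition the $N$-fold integral of $\left(\frac{1}{N}\sum_n F(\zeta_n) - t\right)_+$ against $\prod_{n=1}^N \widehat\dP_N(d\zeta_n)$.

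The main obstacle is purely technical: checking that the hypotheses of the cited CVaR theorem hold for $Z$. Because $\widehat\dP_N$ is finitely supported, $Z$ is a discrete, atomic random variable, whereas the classical Rockafellar--Uryasev statement is frequently phrased for continuous losses. I would therefore invoke the general-distribution version of the minimization identity, which holds for any integrable loss, or verify it directly: the map $t \mapsto t + \frac{1}{\alpha}\bE[(Z-t)_+\mid\widehat\dP_N]$ is convex, with right derivative $1 - \frac{1}{\alpha}\Pr(Z > t \mid \widehat\dP_N)$, so its minimizers are exactly the $(1-\alpha)$-quantiles of $Z$ and the minimum value equals $\frac{1}{\alpha}\int_0^\alpha \text{VaR}_\tau(Z\mid\widehat\dP_N)\,d\tau$. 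The only subtlety is at the quantile jumps of the discrete $Z$, which are dispatched by the standard atom-splitting argument.
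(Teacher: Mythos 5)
Your proof is correct and follows essentially the same route as the paper: the paper gives no standalone proof of Proposition \ref{prop:APUB-DEF-OPT}, instead declaring it ``analogous to'' Theorem 1 of \cite{rockafellar_optimization_2000} (restated as Theorem \ref{thm:CVaR-defn}), which is exactly your argument of identifying $\bU_{\text{\tiny Efron}}^\tau[\mu|\widehat\dP_N]$ with $\text{VaR}_\tau$ of the bootstrap sample mean on the $N$-fold product space and then invoking the Rockafellar--Uryasev minimization identity. Your closing verification that the identity survives the atomicity of the bootstrap distribution (via the convexity and right-derivative argument at quantile jumps) addresses a subtlety the paper glosses over, since the cited theorem is classically stated under distributional continuity assumptions, and is worth retaining.
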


\begin{remark} \label{rem:APUB-DEF-OPT}
    By \cref{prop:APUB-DEF-OPT}, we have that $\bU_{\text{\tiny APUB}}^\alpha [ \mu |\widehat \dP_N ]$ monotonically decreases in $\alpha \in (0, 1]$ w.p.1. This implies that, for $\alpha \in (0, 1]$,
    $$
    \begin{aligned}
        \bU_{\text{\tiny APUB}}^\alpha [ \mu |\widehat \dP_N ] &\ge \bU_{\text{\tiny APUB}}^1 [ \mu |\widehat \dP_N ] =  \bE \left[ \left. \frac{1}{N} \sum_{n = 1}^N F({\zeta}_n) \ \right| \ \widehat \dP_N \right]  \\
        & =  \frac{1}{N} \sum_{n = 1}^N \bE \left[ F({\zeta}_n) \ \left| \ \widehat \dP_N \right. \right] = \widehat \mu_N, \quad \wpo. 
    \end{aligned}
    $$
\end{remark}



   \begin{proposition} \label{prop:APUB-correct}
         Suppose that the skewness $\bE[F(\xi)-\mu]^3/\sigma^3 < \infty$. Then, for a fixed nominal level $1-\alpha$, 
       $\bU_{\text{\tiny APUB}}^\alpha [ \mu |\widehat \dP_N ] $ is first-order asymptotically correct, i.e.,
    \begin{equation*}
        \Pr \left( \mu \le \bU_{\text{\tiny APUB}}^\alpha [ \mu |\widehat \dP_N ]  \right) \ge (1-\alpha) + O(N^{-1/2}).
    \end{equation*}
   \end{proposition}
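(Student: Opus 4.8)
The plan is to reduce the asymptotic correctness of \ref{APUB} to the (already-invoked) first-order accuracy of Efron's percentile bound by means of a \emph{pathwise domination} argument, so that essentially none of the delicate Edgeworth-expansion analysis has to be redone for the averaged bound. Concretely, I would first prove the pointwise inequality $\bU_{\text{\tiny APUB}}^\alpha [ \mu |\widehat \dP_N ] \ge \bU_{\text{\tiny Efron}}^\alpha [ \mu |\widehat \dP_N ]$, valid w.p.1 for every sample path, and then combine monotonicity of probability with the coverage expansion of the Efron bound to conclude.

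The domination step rests on the monotonicity of $\tau \mapsto \bU_{\text{\tiny Efron}}^\tau [ \mu |\widehat \dP_N ]$. Writing $G_N(t) := \Pr\!\left(\frac1N\sum_{n=1}^N F(\zeta_n) \le t \,\middle|\, \widehat\dP_N\right)$ for the bootstrap distribution of the resampled mean, Efron's bound is exactly the generalized quantile $\bU_{\text{\tiny Efron}}^\tau = \inf\{t : G_N(t) \ge 1-\tau\} = G_N^{-1}(1-\tau)$. Since the quantile function is nondecreasing in its argument while $1-\tau$ decreases as $\tau$ grows, $\bU_{\text{\tiny Efron}}^\tau$ is nonincreasing in $\tau$, so $\bU_{\text{\tiny Efron}}^\tau \ge \bU_{\text{\tiny Efron}}^\alpha$ for all $\tau \in (0,\alpha]$. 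Averaging this inequality over $\tau$ and invoking \cref{defn:APUB} gives, w.p.1,
\[
\bU_{\text{\tiny APUB}}^\alpha [ \mu |\widehat \dP_N ] = \frac1\alpha \int_0^\alpha \bU_{\text{\tiny Efron}}^\tau [ \mu |\widehat \dP_N ]\, d\tau \ge \frac1\alpha \int_0^\alpha \bU_{\text{\tiny Efron}}^\alpha [ \mu |\widehat \dP_N ]\, d\tau = \bU_{\text{\tiny Efron}}^\alpha [ \mu |\widehat \dP_N ].
\]
Consequently the event $\{\mu \le \bU_{\text{\tiny Efron}}^\alpha\}$ is contained in $\{\mu \le \bU_{\text{\tiny APUB}}^\alpha\}$, whence
\[
\Pr\!\left(\mu \le \bU_{\text{\tiny APUB}}^\alpha [ \mu |\widehat \dP_N ]\right) \ge \Pr\!\left(\mu \le \bU_{\text{\tiny Efron}}^\alpha [ \mu |\widehat \dP_N ]\right).
\]

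It then remains to quote the first-order accuracy of Efron's percentile bound. Under the finite-skewness hypothesis $\bE[F(\xi)-\mu]^3/\sigma^3 < \infty$, the one-term Edgeworth expansion of the bootstrap distribution $G_N$ of the resampled mean (valid precisely because a finite third moment controls the leading skewness-correction term) yields the classical percentile-method coverage expansion $\Pr(\mu \le \bU_{\text{\tiny Efron}}^\alpha) = (1-\alpha) + O(N^{-1/2})$; this is the sense in which Efron's bound is first-order accurate. Combining this with the displayed probability inequality gives $\Pr(\mu \le \bU_{\text{\tiny APUB}}^\alpha) \ge (1-\alpha) + O(N^{-1/2})$, which is exactly first-order correctness.

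The substantive obstacle is the honest justification of the Edgeworth step, not the domination: one must verify that the idealized ($M\to\infty$) bound from Proposition~\ref{prop:Efron} genuinely inherits the classical percentile-bootstrap coverage expansion, together with mild care that the almost-sure quantile identity $\bU_{\text{\tiny Efron}}^\tau = G_N^{-1}(1-\tau)$ and the measurability of the coverage event are clean. By contrast, the domination inequality is elementary, and it is where the averaging in \ref{APUB} buys correctness essentially for free: integrating only to the right of the $(1-\alpha)$ percentile can only inflate the bound relative to Efron's, so it can never destroy coverage. This is the conceptual crux that renders the otherwise conservative averaging asymptotically harmless, and it also explains why we obtain the one-sided \emph{correctness} ($\ge$) rather than two-sided accuracy.
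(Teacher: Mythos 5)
Your proposal is correct and takes essentially the same route as the paper: the pathwise domination $\bU_{\text{\tiny APUB}}^\alpha [ \mu |\widehat \dP_N ] \ge \bU_{\text{\tiny Efron}}^\alpha [ \mu |\widehat \dP_N ]$ (which the paper asserts directly from Definition~\ref{defn:APUB}) combined with the first-order accuracy of Efron's percentile bound (which the paper delegates to Section~4.2 of Shao and Tu's book on the jackknife and bootstrap). Your write-up simply makes explicit the quantile-monotonicity argument behind the domination and the Edgeworth-expansion provenance of the Efron coverage expansion, both of which the paper leaves to a one-line assertion and a citation, respectively.
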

    Proposition \ref{prop:APUB-correct} establishes the asymptotic correctness of \ref{APUB}, which guarantees that the nominal level is a conservative boundary for the actual coverage probability. This attribute confirms that \ref{APUB} is an effective upper-bound statistic, especially valuable for its robust response to epistemic uncertainty encountered with limited sample data. 
	\begin{theorem}\label{thm:APUB-Consistent}
	For any $\alpha\in (0,1]$, as $N \to \infty$, $\bU_{\text{\tiny APUB}}^\alpha [ \mu |\widehat \dP_N ] \to \mu$, w.p.1.
	\end{theorem}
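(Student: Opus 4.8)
The plan is to sandwich $\bU_{\text{\tiny APUB}}^\alpha[\mu|\widehat\dP_N]$ between two quantities that both converge to $\mu$ almost surely, and then apply a squeeze argument. The lower bound is already in hand: Remark \ref{rem:APUB-DEF-OPT} gives $\bU_{\text{\tiny APUB}}^\alpha[\mu|\widehat\dP_N] \ge \widehat\mu_N$ w.p.1, and the strong law of large numbers yields $\widehat\mu_N \to \mu$ w.p.1. The only genuine task is therefore to construct a matching upper bound that also collapses to $\mu$.

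For the upper bound I would work from the CVaR-type representation in Proposition \ref{prop:APUB-DEF-OPT}. Writing $S_N := \frac1N\sum_{n=1}^N F(\zeta_n)$ for the bootstrap sample mean, the infimum over $t$ is dominated by the feasible choice $t = \widehat\mu_N$, so that
\[
\bU_{\text{\tiny APUB}}^\alpha[\mu|\widehat\dP_N] \le \widehat\mu_N + \frac1\alpha\, \bE\!\left[\left(S_N - \widehat\mu_N\right)_+ \,\middle|\, \widehat\dP_N\right].
\]
Using the identity $(a)_+ = \tfrac12(a+|a|)$ together with $\bE[S_N \mid \widehat\dP_N] = \widehat\mu_N$ and the conditional variance $\text{Var}(S_N \mid \widehat\dP_N) = \widehat\sigma_N^2/N$, a single application of Jensen's inequality bounds the expected positive part by $\widehat\sigma_N/(2\sqrt N)$. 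This produces the clean two-sided estimate
\[
\widehat\mu_N \;\le\; \bU_{\text{\tiny APUB}}^\alpha[\mu|\widehat\dP_N] \;\le\; \widehat\mu_N + \frac{\widehat\sigma_N}{2\alpha\sqrt N}.
\]

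To conclude, I would invoke the strong law a second time: because $\mu$ and $\sigma$ are finite, $\widehat\mu_N \to \mu$ and $\widehat\sigma_N^2 \to \sigma^2$ w.p.1, whence $\widehat\sigma_N/(2\alpha\sqrt N) \to 0$ w.p.1. On the intersection of these two probability-one events both bounds tend to $\mu$, and the squeeze theorem delivers $\bU_{\text{\tiny APUB}}^\alpha[\mu|\widehat\dP_N] \to \mu$ w.p.1 for every fixed $\alpha\in(0,1]$.

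I do not expect any step to be genuinely hard; the part demanding the most care is the upper bound, specifically confirming that the conditional variance of the bootstrap mean is exactly $\widehat\sigma_N^2/N$ and that the Jensen step is legitimate under only a finite-variance hypothesis, so that the residual term is honestly $O(N^{-1/2})$ after conditioning on the sample path. Coordinating the two almost-sure events arising from the separate applications of the strong law is a routine intersection and poses no real obstacle.
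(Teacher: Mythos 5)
Your proof is correct, but it takes a genuinely different and more elementary route than the paper's. The paper fixes a sample path in a probability-one set where both the classical strong law and a bootstrap law of large numbers (Athreya's theorem) hold, then works conditionally on that path: it writes $\bU_{\text{\tiny APUB}}^\alpha[\mu|\widehat\dP_N]$ as $\mathrm{CVaR}_\alpha$ of the bootstrap mean, decomposes it as a convex combination of $\mathrm{VaR}_\alpha$ and $\mathrm{CVaR}_\alpha^+$, handles the $\mathrm{VaR}$ term via convergence of quantile functions (van der Vaart, Lemma 21.2) and the $\mathrm{CVaR}^+$ term via Mallows' inequality, using that the bootstrap mean's standard deviation vanishes. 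Your sandwich argument bypasses all of this machinery: the lower bound $\widehat\mu_N \le \bU_{\text{\tiny APUB}}^\alpha[\mu|\widehat\dP_N]$ comes from Remark \ref{rem:APUB-DEF-OPT}, and the upper bound from the suboptimal choice $t=\widehat\mu_N$ in Proposition \ref{prop:APUB-DEF-OPT}, the identity $(a)_+=\tfrac12(a+|a|)$, and Cauchy--Schwarz/Jensen applied to the conditional law (which is a finitely supported discrete distribution, so all conditional moments trivially exist and the step is legitimate with no extra hypotheses); this yields
\[
\widehat\mu_N \;\le\; \bU_{\text{\tiny APUB}}^\alpha[\mu|\widehat\dP_N] \;\le\; \widehat\mu_N + \frac{\widehat\sigma_N}{2\alpha\sqrt N},
\]
after which two applications of the classical SLLN (valid since $\mu,\sigma<\infty$ gives $\bE[F(\xi)^2]<\infty$, hence $\widehat\sigma_N^2\to\sigma^2$ w.p.1) and a squeeze finish the proof. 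What your route buys is self-containedness and a quantitative bonus: the gap between \ref{APUB} and the sample mean is almost surely $O\bigl(N^{-1/2}/\alpha\bigr)$, a rate the paper's qualitative argument does not exhibit. What the paper's route buys is structural information aligned with its other results: it tracks the bootstrap distribution's tail directly, showing in passing that the Efron/$\mathrm{VaR}$ component itself converges to $\mu$, which dovetails with the asymptotic-correctness analysis in Section \ref{subsec:Asymptotic Characteristics of APUB} and Appendix \ref{section:Proof}.
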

    Theorem \ref{thm:APUB-Consistent} shows the asymptotic consistency, i.e. \ref{APUB} is a consistent estimator for the population mean, given that the uncertainty diminishes along with an increase in the sample size,

\section{\ref{APUB} Optimization}\label{sec:SP-APUB}
    We apply \ref{APUB} to stochastic optimization problems. In the context of optimization, we have a decision region $\cX \subseteq \bR^{d_x}$ and let the cost function $F(x, \xi) : \cX \times \Xi \mapsto \bR $ be $\fB$-measurable for all $x\in \cX$. Denote the mean and standard deviation of $F(x,\xi)$ by $\mu(x)$ and $\sigma(x)$ respectively. The \ref{mod:CUB-SP} framework using \ref{APUB} is written as
    \begin{equation}
		 \min_{x \in \cX} \bU_{\text{\tiny APUB}}^\alpha [ \mu(x) |\widehat \dP_N ]. \tag{APUB-M} \label{mod:APUB-SP}
    \end{equation}
    Let $\widehat{\vartheta}_N^\alpha$ be the optimal value of \ref{mod:APUB-SP} and \( \widehat{\cS}_N^\alpha \) denote the set of its optimal solutions. Also, denote by $\vartheta^*$ the optimal value of \ref{mod:orginal_SP}  and by $\cS$ the set of its optimal solutions. By Remark \ref{rem:APUB-DEF-OPT}, we know that $\widehat{\vartheta}_N^\alpha$ decreases in $\alpha \in (0, 1]$ w.p.1 and $\widehat{\vartheta}_N^1$ is the optimal value of \ref{mod:SAA}. 
    
    By applying the statistical technique, we expect to significantly reduce the impact of epistemic uncertainty. We now present some mild assumptions as follows.
    \begin{assumption} \label{asp:compact}
		There exists a compact set \( \mathcal{K} \subseteq \mathcal{X} \) such that: (i)  \( \mathcal{S} \subseteq \mathcal{K} \); (ii) \( \widehat{\mathcal{S}}_N^\alpha \subseteq \mathcal{K} \ \wpo \)   for sufficiently large \( N \) and $\alpha\in (0,1]$. 
	\end{assumption}
    Assumption \ref{asp:compact} is frequently encountered in the literature pertaining to the asymptotic analysis of the SAA method \cite{birge_introduction_2011, shapiro_lectures_2021}. This assumption posits that it is adequate to confine the examination of decision properties to the compact set $\cK$. For the purposes of the discussion in the remainder of Section \ref{sec:SP-APUB}, we proceed under the premise that the decision space is indeed $\cK$, a simplification that does not limit the generality of our analysis.

    \begin{assumption} \label{asp:ContinuousConvex}
		There exists an open convex hull \( \mathcal{N} \) containing \( \mathcal{K} \) such that: (i) \( F(x,\xi) \) is convex on \( \mathcal{N} \) for each $\xi \in \Xi$; (ii)$\mu(x)$ and $\sigma(x)$ are finite for all $x \in \cN$.
        
	\end{assumption}

    \textbf{Asymptotic Correctness:}
    \citet{mohajerin_esfahani_data-driven_2018} introduce the concept of reliability for a certain optimal solution in DRO approaches. The reliability refers to the probability that the optimal value of a DRO model exceeds the expected cost of the system at its optimal solution in true scenarios. We extend this concept to the entire optimal solution set, which in our case is called the coverage probability of the general \ref{mod:CUB-SP} framework. Denote a probability function of a given subset $S \subseteq \cX$ as
    \begin{equation} \label{eq:beta}
        \beta(\vartheta, S) :=  \Pr \left( \vartheta  \ge \max_{ x \in S} \mu( x) \right).
    \end{equation}
    Let $\overline{\vartheta}_N^\alpha$ and \( \overline {\cS}_N^\alpha \) be the optimal value and optimal solution set of \ref{mod:CUB-SP}, respectively.
    The coverage probability of \ref{mod:CUB-SP} is $\beta(\overline{\vartheta}_N^\alpha, \overline {\cS}_N^\alpha)$, which measures the chance that $\overline {\vartheta}_N^\alpha$ can serve as an upper bound of the actual performance of \ref{mod:CUB-SP} across all optimal solutions. In the following, we define the asymptotic correctness of \ref{mod:CUB-SP}.
    \begin{definition}
    \ref{mod:CUB-SP} is asymptotically correct if the limit of its coverage probability serves as a lower bound for the nominal level as
    \(
        \lim_{N \to \infty} \beta(\overline{\vartheta}_N^\alpha, \overline {\cS}_N^\alpha) \ge (1-\alpha). 
    \)
    \end{definition}
    Defined on the entire optimal solution set, the concept of asymptotic correctness is stricter than the reliability concerning a certain optimal solution. If a \ref{mod:CUB-SP} framework is asymptotically correct, we have that, for any $\bar x_N \in \overline {\cS}_N^\alpha$,
    \begin{align*}
       \lim_{N \to \infty} \beta(\overline{\vartheta}_N^\alpha, \{\bar x_N\}) \ge \lim_{N \to \infty} \beta(\overline{\vartheta}_N^\alpha, \overline {\cS}_N^\alpha) \ge (1-\alpha).
    \end{align*}
   In the subsequent statement, we refer to $\beta(\overline{\vartheta}_N^\alpha, \{\bar x_N\})$ as the coverage probability of \ref{mod:CUB-SP} concerning $\bar x$, or simply the coverage probability at $\bar x_N$. The coverage probability quantifies the likelihood that $\overline{\vartheta}_N^\alpha$ is an upper bound for the true expected cost at $\bar x_N$, thereby indicating the reliability of the cost estimation for an obtained solution. Thus, we can say that the asymptotic correctness of \ref{mod:CUB-SP} guarantees the asymptotic correctness at any optimal solution. The following theorem shows  the asymptotic correctness of \ref{mod:APUB-SP}. 
    \begin{theorem}\label{thm:APUB-SP-Correct}
    Suppose that Assumption \ref{asp:compact} and \ref{asp:ContinuousConvex} hold. Assume that (i) there exists $x_0\in \cK$ such that $\bE \left[ |F(x_0,\xi)|^3 \right] <\infty;$ (ii) for any $x,y \in \cK$, $|F(x,\xi)-F(y,\xi)| < L(\xi)\|x-y\|$, where $\bE \left[ |L(\xi)|^3 \right] < \infty$; and (iii)  $\sigma(x)$ and $\sigma^{-1}(x)$ are bounded on $x\in \cK$. 
    Then, \ref{mod:APUB-SP} is asymptotically correct for $\alpha\in (0,1]$, i.e.,
      \begin{equation*} 
       \lim_{N \to \infty}  \beta(\widehat {\vartheta}_N^\alpha, \widehat \cS_N^\alpha)
         \ge (1-\alpha).
    \end{equation*}
    \end{theorem}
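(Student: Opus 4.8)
The plan is to collapse the set-level coverage event into a single pointwise correctness statement evaluated at a data-dependent solution, and then to tame the randomness of that solution through uniform convergence together with an empirical-process argument. Let $\widehat x_N \in \argmax_{x\in\widehat\cS_N^\alpha}\mu(x)$, a maximizer that exists because $\widehat\cS_N^\alpha$ is a compact subset of $\cK$ and $\mu$ is continuous. Since $\widehat x_N$ is itself optimal for \ref{mod:APUB-SP}, its objective value equals the optimal value, i.e. $\bU_{\text{\tiny APUB}}^\alpha [ \mu(\widehat x_N) | \widehat \dP_N ]=\widehat\vartheta_N^\alpha$, while $\mu(\widehat x_N)=\max_{x\in\widehat\cS_N^\alpha}\mu(x)$. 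Hence, by the definition of $\beta$,
\[
\big\{\widehat\vartheta_N^\alpha \ge \max_{x\in\widehat\cS_N^\alpha}\mu(x)\big\} = \big\{\bU_{\text{\tiny APUB}}^\alpha [ \mu(\widehat x_N) | \widehat \dP_N ] \ge \mu(\widehat x_N)\big\}.
\]
So the task reduces exactly to the pointwise correctness of \ref{APUB} (Proposition \ref{prop:APUB-correct}), but evaluated at the random point $\widehat x_N$ rather than a fixed one; the whole difficulty is that $\widehat x_N$ is built from the same sample that defines the bound.

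First I would assemble the almost-sure scaffolding. By the CVaR-type representation in Proposition \ref{prop:APUB-DEF-OPT} and convexity of $F(\cdot,\xi)$ (Assumption \ref{asp:ContinuousConvex}), the map $x\mapsto \bU_{\text{\tiny APUB}}^\alpha [ \mu(x) | \widehat \dP_N ]$ is convex on $\cN$ for every realization, because $(\cdot-t)_+$ is convex nondecreasing and the infimum over $t$ of a jointly convex integrand is convex. Theorem \ref{thm:APUB-Consistent} gives pointwise $\bU_{\text{\tiny APUB}}^\alpha [ \mu(x) | \widehat \dP_N ]\to\mu(x)$ \wpo; since finite convex functions converging pointwise on an open convex set converge uniformly on compacta, this upgrades to $\sup_{x\in\cK}\big|\bU_{\text{\tiny APUB}}^\alpha [ \mu(x) | \widehat \dP_N ]-\mu(x)\big|\to 0$ \wpo. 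Standard SAA arguments then yield $\widehat\vartheta_N^\alpha\to\vartheta^*$ and convergence of $\widehat\cS_N^\alpha$ to $\cS$ (its distance to $\cS$ vanishes), so $\mu(\widehat x_N)\to\vartheta^*$ \wpo; Assumption \ref{asp:compact} keeps every minimizer inside the fixed compact $\cK$ so these uniform statements apply.

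The core is a $\sqrt N$-expansion. Conditional on $\widehat\dP_N$, a bootstrap CLT shows the resampled mean $\tfrac1N\sum_n F(x,\zeta_n)$ is approximately $\mathrm{N}\big(\widehat\mu_N(x),\widehat\sigma_N^2(x)/N\big)$, and taking CVaR at level $\alpha$ gives, uniformly in $x\in\cK$,
\[
\sqrt N\big(\bU_{\text{\tiny APUB}}^\alpha [ \mu(x) | \widehat \dP_N ]-\mu(x)\big) = \sqrt N\big(\widehat\mu_N(x)-\mu(x)\big) + c_\alpha\,\sigma(x) + o_p(1),
\]
where $c_\alpha := \tfrac1\alpha\int_0^\alpha \Phi^{-1}(1-\tau)\,d\tau$ is the standard-normal CVaR multiplier. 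Because $\Phi^{-1}(1-\tau)$ decreases in $\tau$, averaging over $(0,\alpha)$ gives $c_\alpha\ge \Phi^{-1}(1-\alpha)=z_\alpha$, which is precisely the slack producing coverage at least $(1-\alpha)$. The Lipschitz bound (ii) with $\bE|L(\xi)|^3<\infty$, together with (i), makes $\{F(x,\cdot):x\in\cK\}$ a Donsker class, so $\sqrt N(\widehat\mu_N(\cdot)-\mu(\cdot))$ is stochastically equicontinuous and converges weakly to a tight Gaussian process, while (iii) keeps $\sigma$ and $\sigma^{-1}$ bounded so the remainder is uniform and the limit nondegenerate. Equicontinuity lets me replace $\widehat x_N$ by its limit $x^*\in\cS$: evaluating the expansion at $\widehat x_N$, using $\widehat\sigma_N(\widehat x_N)\to\sigma(x^*)=:\sigma_*$ and the CLT, I obtain $\sqrt N(\widehat\vartheta_N^\alpha-\mu(\widehat x_N))\Rightarrow Z+c_\alpha\sigma_*$ with $Z\sim\mathrm{N}(0,\sigma_*^2)$, whence $\liminf_N\Pr(\widehat\vartheta_N^\alpha\ge\mu(\widehat x_N))\ge\Phi(c_\alpha)\ge 1-\alpha$.

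The main obstacle is exactly this last transfer: $\widehat x_N$ is correlated with the empirical process through the shared sample, so one cannot simply invoke the fixed-point Proposition \ref{prop:APUB-correct}. Controlling $\sqrt N(\widehat\mu_N(\widehat x_N)-\mu(\widehat x_N))$ needs the Donsker/stochastic-equicontinuity property and a uniform-in-$x$ bound on the remainder of the bootstrap CVaR approximation, the latter being the most delicate estimate since it fuses a uniform Berry–Esseen-type control of the bootstrap normal approximation with the third-moment hypotheses. A secondary difficulty is non-unique optima: when $\cS$ is not a singleton the worst-case selection $\widehat x_N=\argmax_{x\in\widehat\cS_N^\alpha}\mu(x)$ forces the argument to hold uniformly over a neighborhood of $\cS$, and here I would rely on the convexity of $\cS$ and on the strict over-coverage $\Phi(c_\alpha)>1-\alpha$ to absorb the loss incurred by taking the maximum.
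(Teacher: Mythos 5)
Your proposal opens exactly as the paper does (reduce the set-level event to the worst optimizer $\widehat x_N\in\argmax_{x\in\widehat\cS_N^\alpha}\mu(x)$), but after that the routes genuinely diverge. The paper throws away the CVaR structure immediately: it bounds $\bU_{\text{\tiny APUB}}^\alpha\ge\bU_{\text{\tiny Efron}}^\alpha$ and then proves only the fixed-$x$ uniform statement $\sup_{x\in\cK}\big|\Pr\big(\bU_{\text{\tiny Efron}}^\alpha[\mu(x)|\widehat\dP_N]\ge\mu(x)\big)-(1-\alpha)\big|\to 0$, via a uniform Berry--Esseen inequality, its bootstrap analogue (Lemma \ref{lemma:BE-Boot}), and uniform a.s.\ convergence of the bootstrap threshold $T_N^\alpha(x)$ to $\Phi^{-1}(\alpha)$ (Lemma \ref{lemma:TN}); it uses no empirical-process theory and no expansion of the CVaR itself. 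You instead keep APUB, expand it as $\widehat\mu_N(x)+c_\alpha\widehat\sigma_N(x)/\sqrt N+o_p(N^{-1/2})$ with $c_\alpha=\frac1\alpha\int_0^\alpha\Phi^{-1}(1-\tau)\,d\tau\ge z_\alpha$, and use a Donsker/stochastic-equicontinuity argument to carry the limit to the sample-dependent point $\widehat x_N$. Your instinct about where the difficulty lies is better than the paper's: the paper's assertion that the fixed-$x$ uniform convergence ``suffices'' for the random $\tilde x_N$ is precisely the step it never justifies (the supremum there sits \emph{outside} the probability, which by itself does not control an event evaluated at a point chosen using the same sample), and your equicontinuity transfer is the standard correct tool for exactly that step.

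The fatal gap is your treatment of non-unique optima. The equicontinuity transfer needs $\widehat x_N$ to converge to a \emph{single} point $x^*$, which follows from $\bD(\widehat\cS_N^\alpha,\cS)\to0$ only when $\cS$ is a singleton. Your proposed patch --- convexity of $\cS$ plus the strict over-coverage $\Phi(c_\alpha)>1-\alpha$ ``absorbing the loss'' --- cannot work, because the loss is not commensurate with that gap. Concretely: if $\mu$ is constant on a flat piece of $\cS$, convexity of $F(\cdot,\xi)$ forces $F(\cdot,\xi)$ to be a.s.\ affine there (the chord-minus-midpoint variable is nonnegative with zero mean), so on $\cS$ your own expansion makes the coverage event asymptotically contained in $\{\inf_{x\in\cS}[\bG(x)+c_\alpha\sigma(x)]\ge0\}$, where $\bG$ is the limiting Gaussian process of $\sqrt N(\widehat\mu_N-\mu)$. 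Each extreme point of $\cS$ fails, i.e.\ $\bG(x)<-c_\alpha\sigma(x)$, with probability exactly $\Phi(-c_\alpha)\approx\alpha/e$, and a two-dimensional flat $\cS$ can have three nearly decorrelated extreme points (e.g.\ $F(x,\xi)=a_\xi+b_\xi^\intercal x+g(x)$ with $g$ deterministic convex vanishing on a large triangle, $a,b_1,b_2$ independent); then the limiting coverage is at most $(1-\Phi(-c_\alpha))^3$, which for $\alpha=0.05$ is about $0.942<0.95$ since $3\Phi(-c_{0.05})\approx0.059>0.05$. So over-coverage at one point ($\approx\alpha/e$ of slack) loses to a union over three or more effectively independent failure modes ($3/e>1$). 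This means your proof only closes for singleton $\cS$ --- and, notably, the same regime is exactly where the paper's unjustified reduction is also in doubt, so you cannot repair your argument by appealing to the paper's proof.

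A second, more repairable gap: the uniform $o_p(N^{-1/2})$ remainder in your CVaR expansion is asserted, not proved. The bootstrap Berry--Esseen bound controls the conditional law of $S_N^*(x)$ in Kolmogorov distance, which yields quantiles (VaR) but not tail expectations; for conditional CVaR you additionally need uniform integrability, to be extracted from the conditional third moments (Lemma \ref{lemma:C1*-C3*}). The cleanest fix is a hybrid of the two proofs in the singleton case: use the paper's inequality $\bU_{\text{\tiny APUB}}^\alpha\ge\bU_{\text{\tiny Efron}}^\alpha$ (the same slack you exploit via $c_\alpha\ge z_\alpha$, but requiring only quantile control), note the coverage event then reads $S_N(\widehat x_N)\ge T_N^\alpha(\widehat x_N)$, handle the threshold uniformly by the paper's Lemma \ref{lemma:TN}, and handle $S_N(\widehat x_N)$ by your Donsker/equicontinuity argument with $\widehat x_N\to x^*$; this gives $\liminf_N\Pr\ge 1-\alpha$ with no CVaR expansion at all.
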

    
    \begin{remark}
    The attribute of asymptotic correctness lends \ref{mod:APUB-SP} interpretability in the context of statistics. This means that the decision-maker can intuitively set the desired reliability level of \ref{mod:APUB-SP} by selecting an appropriate nominal level $(1 - \alpha)$. This direct statistical
    interpretation does not require detailed physical insights into the problems being addressed. Section \ref{sec:NumericalStudy} provides a numerical demonstration of how this model interpretability confers an advantage.
    \end{remark}

    \begin{remark}
     Recall $\vartheta^*$ is the optimal value of \ref{mod:orginal_SP}. Since  
     $\vartheta^* \le \mu(x)$ for all $x \in \widehat {\cS}_N^\alpha$, we obtain that 
    \begin{equation*}
        \lim_{N \to \infty} \Pr \left(  \widehat{\vartheta}_N^\alpha  \ge \vartheta^* \right) \ge  \lim_{N \to \infty}  \beta(\widehat {\vartheta}_N^\alpha, \widehat \cS_N^\alpha) \ge (1-\alpha).
    \end{equation*}
    This implies that the nominal level approximately represents the lower bound of the probability that $\widehat \vartheta_N^\alpha$ serves as an upper bound for $\vartheta^*$. 
    \end{remark}

    \textbf{Asymptotic Consistency:} 
    In optimization, asymptotic consistency refers to the convergence of the optimal value and optimal solution set of \ref{mod:APUB-SP} with their counterparts in \ref{mod:orginal_SP} w.p.1 as the sample size increases. The following theorem exhibits the asymptotic consistency of \ref{mod:APUB-SP}.

	\begin{theorem} \label{thm:ConsistentAPUB-SP}
		Suppose Assumptions \ref{asp:compact} and \ref{asp:ContinuousConvex} hold. Then, for any given $\alpha\in (0,1]$,  as $N\to \infty$, we have $\widehat{\vartheta}_N^\alpha \to  \vartheta^*$ and 
        \(
        \bD(\widehat \cS_N^\alpha, \cS):= \sup_{y \in \widehat \cS_N^\alpha} \inf_{z \in \cS} \|y - z\| \to 0 \ \wpo.
        \)
	\end{theorem}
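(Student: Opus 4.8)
The plan is to reduce both claims to a single uniform convergence statement on the compact feasible region $\cK$ (to which we may restrict by Assumption \ref{asp:compact}), namely
\[
\Delta_N := \sup_{x\in\cK}\bigl| \bU_{\text{\tiny APUB}}^\alpha [ \mu(x) |\widehat \dP_N ] - \mu(x)\bigr| \to 0 \qquad \wpo,
\]
and then to invoke the classical argmin-stability machinery for uniform convergence. Once $\Delta_N\to0$ is in hand, the value convergence is immediate: since $\widehat\vartheta_N^\alpha=\min_{x\in\cK}\bU_{\text{\tiny APUB}}^\alpha[\mu(x)|\widehat\dP_N]$ and $\vartheta^*=\min_{x\in\cK}\mu(x)$ (both minima attained on $\cK$ by Assumption \ref{asp:compact}), we get $|\widehat\vartheta_N^\alpha-\vartheta^*|\le\Delta_N$.

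The crux is establishing $\Delta_N \to 0$, and here the key idea is to exploit convexity so that pointwise almost-sure convergence automatically upgrades to uniform convergence. First, for each fixed $x\in\cN$ the map $F(x,\cdot)$ is a measurable cost with finite mean and variance by \ref{asp:ContinuousConvex:A2-3}, so Theorem \ref{thm:APUB-Consistent} gives $\bU_{\text{\tiny APUB}}^\alpha[\mu(x)|\widehat\dP_N]\to\mu(x)$ w.p.1. Taking a countable dense subset $D\subseteq\cN$ and intersecting the countably many probability-one events yields a single event of probability one on which this convergence holds simultaneously for every $x\in D$. Next I would verify that $x\mapsto \bU_{\text{\tiny APUB}}^\alpha[\mu(x)|\widehat\dP_N]$ is a finite convex function on $\cN$ for each realization: using the variational representation in Proposition \ref{prop:APUB-DEF-OPT}, the integrand $[\frac1N\sum_n F(x,\zeta_n)-t]_+$ is jointly convex in $(x,t)$ (composition of the nondecreasing convex $[\cdot]_+$ with the convex average of the $F(\cdot,\zeta_n)$ from \ref{asp:ContinuousConvex:A2-1}), the expectation over the finitely supported $\widehat\dP_N$ preserves convexity, and partial minimization over $t$ keeps it convex; finiteness holds because $F$ is real-valued on $\cN$. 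The limit $\mu(\cdot)=\bE[F(\cdot,\xi)]$ is likewise finite and convex on $\cN$. I would then apply the standard convex-convergence theorem (e.g., Rockafellar's \emph{Convex Analysis}, Theorem~10.8): a sequence of finite convex functions on an open convex set converging pointwise on a dense subset to a finite limit converges uniformly on every compact subset. Applied to $\cK\subseteq\cN$, this delivers $\Delta_N\to0$ on the probability-one event.

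Finally, for the solution-set deviation, suppose toward a contradiction that $\bD(\widehat\cS_N^\alpha,\cS)\not\to0$ along some sample path in the good event; then there exist $\varepsilon>0$, a subsequence, and $y_{N}\in\widehat\cS_N^\alpha\subseteq\cK$ with $\inf_{z\in\cS}\|y_N-z\|\ge\varepsilon$. Compactness of $\cK$ extracts a further subsequence with $y_N\to\bar y\in\cK$; uniform convergence gives $|\mu(y_N)-\widehat\vartheta_N^\alpha|\le\Delta_N\to0$, while $\widehat\vartheta_N^\alpha\to\vartheta^*$, so $\mu(y_N)\to\vartheta^*$. Since $\mu$ is finite and convex on the open set $\cN$ it is continuous on $\cK$, whence $\mu(\bar y)=\vartheta^*$ and $\bar y\in\cS$, contradicting $\inf_{z\in\cS}\|y_N-z\|\ge\varepsilon$. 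The main obstacle is precisely this pointwise-to-uniform upgrade: a direct uniform law of large numbers for the APUB functional over $\cK$ would require equicontinuity or bracketing estimates, but convexity lets us sidestep this entirely, reducing the requirement to almost-sure convergence on a countable dense set. The only points demanding care are the joint-convexity check for the representation in Proposition \ref{prop:APUB-DEF-OPT} and the common-null-set bookkeeping when passing from fixed-$x$ to dense-set convergence.
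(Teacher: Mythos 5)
Your proposal is correct and follows essentially the same route as the paper: pointwise almost-sure convergence from Theorem \ref{thm:APUB-Consistent} on a countable dense subset of $\cN$, upgraded to uniform convergence on the compact set $\cK$ via convexity of the APUB functional (from Proposition \ref{prop:APUB-DEF-OPT}) and Rockafellar's Theorem 10.8, then the value bound $|\widehat{\vartheta}_N^\alpha - \vartheta^*|\le \sup_{x\in\cK}|\bU_{\text{\tiny APUB}}^\alpha[\mu(x)|\widehat\dP_N]-\mu(x)|$ and a compactness-plus-contradiction argument for $\bD(\widehat\cS_N^\alpha,\cS)\to 0$, exactly as in the paper's Lemmas \ref{lemma:mean-func-continuity} and \ref{thm: uniformCVaR} and its two-part proof. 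Your joint-convexity verification of the variational representation is in fact spelled out in more detail than the paper's corresponding remark.
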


 \begin{remark}
A key advantage of \ref{mod:APUB-SP} is its consistency and data-driven nature, as sample size is the sole factor that determines both convergence and conservatism. As epistemic uncertainty diminishes with a larger sample size, \ref{APUB} naturally contracts. This makes \ref{mod:APUB-SP} less conservative, all without requiring any additional tuning or parameter rescaling. Our approach calibrates reliability with the nominal level $(1 - \alpha)$ but does not need to specify or rescale it to avoid over-conservatism as more data become available.
 \end{remark}

\section{Solution Method Based on Bootstrap Sampling Approximation}
\label{sec:The Solution Method}

By Proposition~\ref{prop:APUB-DEF-OPT}, problem~\eqref{mod:APUB-SP} can be written as
\begin{equation}
\label{eq:APUB-SP-with-t}
    \min_{(x,t)\in\cX\times\bR}
    t + \frac{1}{\alpha}
    \int \Bigl[ \frac{1}{N}\sum_{n=1}^N F(x,\zeta_n) - t \Bigr]_+
    \prod_{n=1}^N \widehat\dP_N(d\zeta_n).
\end{equation}
Recall that $\widehat\dP_N$ is the empirical distribution associated with the
original sample $(\xi_1,\ldots,\xi_N)$.
A bootstrap sample $(\zeta_1,\ldots,\zeta_N)$ drawn from $\widehat\dP_N$ can be
represented by the multiplicities
\(
    V_n := \bigl|\{j : \zeta_j = \xi_n\}\bigr|.
\)
By construction, $V_n \ge 0$ and $\sum_{n=1}^N V_n = N$, and
$V:=(V_1,\ldots,V_N)$ follows a multinomial distribution with index $N$ and
parameter vector $(1/N,\ldots,1/N)$.
Let $\fM_N(v)$ denote its probability mass function and define
\(
    \fV := \Bigl\{ v \in\bZ_+^N : \sum_{n=1}^N v_n = N \Bigr\}.
\)
Then~\eqref{eq:APUB-SP-with-t} is equivalent to
\begin{equation}
\label{eq:APUB-SP-reform}
    \min_{(x,t)\in\cX\times\bR}
    t + \frac{1}{\alpha}
    \sum_{v\in\fV}
    \Bigl[ \frac{1}{N}\sum_{n=1}^N v_n F(x,\xi_n) - t \Bigr]_+ \fM_N(v).
\end{equation}
The support $\fV$ contains $|\fV|=\binom{2N-1}{N}$ scenarios, so directly
solving~\eqref{eq:APUB-SP-reform} is intractable for moderate~$N$.

To mitigate this combinatorial growth, we approximate the expectation in
\eqref{eq:APUB-SP-reform} via Monte Carlo sampling.
Let $(V_{m,1},\ldots,V_{m,N}) \sim \fM_N(v)$ for $m=1,\ldots,M$.
Replacing the expectation by a sample average yields the bootstrap SAA problem
\begin{equation}
\label{mod:APUB_approx}
    \min_{(x,t)\in\cX\times\bR}
    t + \frac{1}{\alpha M}\sum_{m=1}^M
    \Bigl[ \frac{1}{N}\sum_{n=1}^N V_{m,n} F(x,\xi_n) - t \Bigr]_+.
\end{equation}
In practice, a much smaller number of bootstrap scenarios,
$M \ll \binom{2N-1}{N}$, already provides an accurate approximation.
The asymptotic properties of such SAA schemes, including convergence and rates
under mild assumptions, are well documented;
see, e.g.,~\citet{shapiro_lectures_2021}.

\subsection{Two-Stage Linear Stochastic Programs with Random Recourse}

We now specialize APUB to a two-stage linear stochastic program with random
recourse. A bootstrap approximation \eqref{mod:APUB_approx} of the first stage is formulated as 
\begin{equation}
\label{mod:first-bs}
\begin{aligned}
    \min_{x,t}\quad &
        c^\top x + t + \frac{1}{\alpha M} \sum_{m=1}^M
        \Bigl[ \frac{1}{N}\sum_{n=1}^N V_{m,n} Q(x,\xi_n) - t \Bigr]_+ \\
    \text{s.t.}\quad & Ax = b, \\
                     & x \ge 0,
\end{aligned}
\end{equation}
where $\xi_n=(q_n,h_n,T_n,W_n)$, $n=1,\ldots,N$, are the observations, and $Q$ is a recourse cost function as 
\begin{equation}
\label{mod:second}
\begin{aligned}
    Q(x,\xi_n)
    := \min_{y}\quad & q^\top_n y \\
    \text{s.t.}\quad & W_n y = h_n - T_n x, \\
                     & y \ge 0.
\end{aligned}
\end{equation}

\subsection{Adapted L-Shaped Method (see Appendix \ref{section:aglorithm})}

We adapt the L-shaped method~\citep{van_slyke_l-shaped_1969,birge_introduction_2011} to solve
solve~\eqref{mod:first-bs}.

\textbf{Master problem.}
At iteration $(\ell,k)$ we consider the master problem
\begin{subequations}
\label{alg:master}
\begin{align}
    \min_{x,\eta}\quad & c^\top x + \eta \\
    \text{s.t.}\quad &
        Ax = b, \label{alg:master_Ax=b}\\
        & D_j x \ge d_j, && j=1,\ldots,\ell, \label{alg:master_feasible_cuts}\\
        & E_j x + \eta \ge e_j, && j=1,\ldots,k, \label{alg:master_optimal_cuts}\\
        & x \ge 0, \label{alg:master_x>=0}
\end{align}
\end{subequations}
where~\eqref{alg:master_feasible_cuts} and~\eqref{alg:master_optimal_cuts}
collect feasibility and optimality cuts, respectively.
Let $(\hat x,\hat\eta)$ be an optimal master solution.
If $k=0$ (no optimality cuts), we set $\hat\eta=-\infty$ and ignore it when
solving~\eqref{alg:master}.

\textbf{Feasibility cuts.}
For a given $\hat x$, the second-stage problem~\eqref{mod:second} with data
$\xi_n$ may be infeasible, in which case $Q(\hat x,\xi_n)=+\infty$.
Define
\begin{equation}
\label{eq:varphi}
    \gamma(x)
    := \min_{t\in\bR}
    \Bigl\{ t + \frac{1}{\alpha M}\sum_{m=1}^M
        \bigl[ r_m(x)-t \bigr]_+ \Bigr\},
\end{equation}
where
\begin{equation}
\label{alg:r_m}
    r_m(x) := \frac{1}{N}\sum_{n=1}^N V_{m,n} Q(x,\xi_n),\quad
    m=1,\ldots,M.
\end{equation}
Feasibility is checked by solving, for each $n=1,\ldots,N$,
\begin{align}
\label{alg:feasibility}
    u_n(\hat x)
    := \min_{y,v^+,v^-}\quad &
        \mathbf{1}^\top v^+ + \mathbf{1}^\top v^- \\
    \text{s.t.}\quad &
        W_n y + I v^+ - I v^- = h_n - T_n \hat x, \nonumber\\
    & y \ge 0,\ v^+ \ge 0,\ v^- \ge 0, \nonumber
\end{align}
where $\mathbf{1}$ is the all-ones vector and $I$ is the identity matrix.
If $u_n(\hat x)>0$ for some $n$, then
$Q(\hat x,\xi_n)=+\infty$.
Let $\phi_n$ be the simplex multipliers associated with~\eqref{alg:feasibility};
we add the feasibility cut
\[
    D_{\ell+1} x \ge d_{\ell+1},\quad
    D_{\ell+1} := \phi_n^\top T_n,\quad
    d_{\ell+1} := \phi_n^\top h_n,
\]
increment $\ell$, and resolve the master problem.

\textbf{Optimality cuts.}
If $u_n(\hat x)=0$ for all $n$, the second-stage problems are feasible and we
generate an optimality cut. We solve $Q(\hat x,\xi_n)$ for each $n$. 
let $\psi_n$ be the simplex multipliers associated with \eqref{mod:second}.
We compute $r_m(\hat x)$ via~\eqref{alg:r_m} and sort the values in ascending
order: $r_{(1)}(\hat x) \le \cdots \le r_{(M)}(\hat x)$,
where the index $(\cdot)$ denotes the permutation induced
by the sort.
We use the same indexing for the bootstrap weights, i.e., $V_{(m),n}$ is the
$n$-th component of the bootstrap vector associated with $r_{(m)}(\hat x)$.
Let $J := \lceil (1-\alpha) M \rceil$ and define
\begin{equation}
\label{alg:w(hat x)}
    \hat\lambda
    := \left(1 - \frac{M-J}{\alpha M}\right) r_{(J)}(\hat x)
       + \frac{1}{\alpha M}\sum_{m=J+1}^M r_{(m)}(\hat x).
\end{equation}
If $\hat\eta \ge \hat\lambda$, then $\hat x$ is optimal for
\eqref{mod:first-bs} and the algorithm terminates.
Otherwise, we form the coefficients of a new optimality cut as
\begin{subequations}
\label{eq:apub-optcut-coef}
\begin{align}
    E_{k+1} &:=
        \left(1 - \frac{M-J}{\alpha M}\right)
        \left( \frac{1}{N}\sum_{n=1}^N V_{(J),n}\, \psi_n^\top T_n \right) \nonumber \\
        & \quad + \frac{1}{\alpha M}\sum_{m=J+1}^M
        \left( \frac{1}{N}\sum_{n=1}^N V_{(m),n}\, \psi_n^\top T_n \right), \\
    e_{k+1} &:=
        \left(1 - \frac{M-J}{\alpha M}\right)
        \left( \frac{1}{N}\sum_{n=1}^N V_{(J),n}\, \psi_n^\top h_n \right) \nonumber \\
        & \quad + \frac{1}{\alpha M}\sum_{m=J+1}^M
        \left( \frac{1}{N}\sum_{n=1}^N V_{(m),n}\, \psi_n^\top h_n \right).
\end{align}
\end{subequations}
We then add the cut $E_{k+1} x + \eta \ge e_{k+1}$ to
\eqref{alg:master_optimal_cuts}, set $k\gets k+1$, and resolve the master
problem. The following theorem shows that the adapted L-shaped algorithm converges.
\begin{theorem}
\label{thm:algo-convergence-2}
The adapted L-shaped algorithm terminates in finitely many iterations and returns
an optimal solution of~\eqref{mod:first-bs}.
\end{theorem}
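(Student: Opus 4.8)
The plan is to follow the classical finite-convergence argument for the L-shaped / Benders cutting-plane method (as in \cite{van_slyke_l-shaped_1969, birge_introduction_2011}), adapting the optimality-cut step to the APUB recourse functional $\gamma$ defined in \eqref{eq:varphi}. Throughout I would use that \eqref{mod:first-bs} is equivalent to the linear program \eqref{mod:LP_2-stage}, and that $\gamma$ is a convex, piecewise-linear, extended-real-valued function of $x$: each second-stage value $Q(\cdot,\xi_n)$ is convex and piecewise linear by linear-programming duality (finite where \eqref{mod:second} is feasible, $+\infty$ otherwise), each $r_m$ in \eqref{alg:r_m} is a nonnegative combination of these, and by \eqref{alg:vartheta-2} the functional $\gamma=\max_\ell \tau_\ell$ is a finite maximum of the convex functions $\tau_\ell$ in \eqref{eq:tau_ell}. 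I would also record that the weights defining each $\tau_\ell$ are nonnegative, since $J=\lceil(1-\alpha)M\rceil \ge (1-\alpha)M$ forces $1-\tfrac{M-J}{\alpha M}\ge 0$; this sign information is exactly what makes the minorization in the optimality cut go through.

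The first substantive step is to verify that every cut the algorithm adds is a \emph{valid} inequality for \eqref{mod:LP_2-stage}, so that the master program \eqref{alg:master} is a relaxation whose optimal value $c^\intercal\hat x + \hat\eta$ lower-bounds the true optimal value $v^\ast$. For a feasibility cut generated in Step 3, the simplex multipliers $\phi_n$ of \eqref{alg:feasibility} certify infeasibility of the second stage at $\hat x$, so $D_{\ell+1}\hat x < d_{\ell+1}$ while $D_{\ell+1} x \ge d_{\ell+1}$ holds at every $x$ with finite recourse; the cut thus removes the current point but no feasible one. For an optimality cut generated in Step 4, I would invoke weak duality: since $\psi_n$ is dual-feasible for \eqref{alg:subproblem}, $Q(x,\xi_n) \ge \psi_n^\intercal(h_n - T_n x)$ for all $x$, with equality at $\hat x$. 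Substituting this bound into the sorted tail $\Gamma_{\ell^\ast}$ that realizes the maximum in \eqref{alg:vartheta-2} at $\hat x$ yields $\gamma(x) \ge \tau_{\ell^\ast}(x) \ge e_{k+1} - E_{k+1}x$, and at $x=\hat x$ all inequalities are tight, so $e_{k+1}-E_{k+1}\hat x = \hat\lambda = \gamma(\hat x)$ as in \eqref{alg:w(hat x)}. Hence each optimality cut is a supporting hyperplane of $\gamma$, tight at the point that generated it.

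Granting validity, termination implies optimality by a one-line squeeze: the algorithm stops in Step 4 only when $\hat\eta \ge \hat\lambda = \gamma(\hat x)$, and since $(\hat x, \gamma(\hat x))$ is feasible for \eqref{mod:LP_2-stage} while $c^\intercal\hat x + \hat\eta$ is a lower bound on $v^\ast$, we obtain $v^\ast \ge c^\intercal\hat x + \hat\eta \ge c^\intercal\hat x + \gamma(\hat x) \ge v^\ast$, forcing equalities and optimality of $\hat x$. For finiteness I would argue that only finitely many distinct cuts exist and none is generated twice. The feasibility cuts are indexed by the finitely many dual bases of \eqref{alg:feasibility} over $n=1,\dots,N$; the optimality cuts are determined by the selected tail combination $\Gamma_{\ell^\ast}$ (of which there are $\binom{M}{1}\binom{M-1}{M-J}$) together with the tuple of basic dual solutions $(\psi_1,\dots,\psi_N)$ of \eqref{alg:subproblem}, again a finite list. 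No cut recurs: once added, a cut is enforced in every later master, so regenerating the identical optimality cut $(E,e)$ at a later solution $\hat x'$ would require simultaneously $\hat\eta' \ge e - E\hat x'$ (master feasibility) and $\hat\eta' < \hat\lambda' = e - E\hat x'$ (the non-stopping condition), a contradiction; feasibility cuts repeat analogously. Since every non-terminal iteration appends exactly one new cut drawn from a finite pool, the algorithm halts after finitely many iterations.

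I expect the main obstacle to be the optimality-cut validity argument, because here $\gamma$ is \emph{not} a fixed expectation of the $Q(x,\xi_n)$ as in the textbook L-shaped method: the averaging weights change with $x$ through the sort order. The crux is to recognize that freezing the permutation at its value at $\hat x$ produces a single convex minorant $\tau_{\ell^\ast}$ of $\gamma=\max_\ell\tau_\ell$, that this minorant is exact at $\hat x$, and that it can in turn be minorized affinely by the LP duals with matching value at $\hat x$; the nonnegativity of the $\tau_\ell$-weights is precisely what keeps these two minorizations consistent in sign. Once this is in place, the remaining relaxation, squeeze, and finiteness steps are the standard Van Slyke--Wets reasoning.
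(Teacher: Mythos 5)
Your proposal is correct and takes essentially the same approach as the paper's proof: both hinge on the identity $\gamma(x)=\max_{\ell}\tau_\ell(x)$ from \eqref{alg:vartheta-2} with the nonnegativity of the weight $1-\tfrac{M-J}{\alpha M}$, LP duality for the subproblems, validity and tightness of the generated feasibility/optimality cuts, and finiteness of the cut pool (dual extreme points times tail combinations) combined with the observation that no cut can be generated twice. The only cosmetic differences are that the paper first writes out the full cutting-plane LP \eqref{eq:cut_EF_2S_Full} over all pairs $(\ell,\pi)$ and views the algorithm as generating its constraints (whereas you argue cut validity and the termination squeeze directly), and that the paper additionally records the degenerate case $\cK_2=\varnothing$.
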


Our adapted procedure extends the classical L-shaped method by
exploiting the specific structure of APUB.
The sorting-based computation of $\gamma(x)$ allows us to efficiently handle a
large number of bootstrap samples in the second stage.

\section{Numerical Analyses}\label{sec:NumericalStudy}

We consider a two-stage product-mix problem with uncertain labor, adapted from the benchmark in \citet{king1988stochastic}. The goal is to choose a product mix that minimizes contracting cost and expected labor cost under uncertainty. Let $\cI$ be the set of products and $\cJ$ the set of departments. The APUB-based two-stage model specializes to
\begin{align*} 
    \min_{x \ge 0} \ \sum_{i\in\cI} c_i x_i
    + \bU_{\text{\tiny APUB}}^\alpha \bigl[ \bE [Q(x, \xi)] \,\big|\, \widehat \dP_N \bigr],
\end{align*}
where the recourse problem is
\begin{equation*} 
\begin{aligned}
    Q(x, \xi) =  \min_{y, z}\quad & \sum_{j \in \cJ} q_j y_j \\
    \text{s.t.}\quad
    & w_{j} y_j + z_j \ge \sum_{i \in \cI} t_{ij} x_i, && j \in \cJ, \\
    & \sum_{j  \in \cJ} z_j = h_1, \\
    & \sum_{j \in \cJ} y_j \le h_2, \\
    & y_j \ge 0,\ z_j \ge 0, && j \in \cJ .
\end{aligned}    
\end{equation*}
In the first stage, $x_i$ denotes the contracted quantity of product $i$ with unit cost $c_i$.
In the second stage, $\xi=(h,q,w)$ collects the random labor parameters:
$h=(h_1,h_2)$ is the number of available permanent employees and temporary workers,
$q_j$ is the cost of a temporary worker in department $j$, and $w_j$ is the efficiency of such a worker.
Decisions $y_j$ and $z_j$ represent the numbers of temporary and permanent workers allocated to department $j$, and $t_{ij}$ is the labor requirement of product $i$ in department $j$.

\textbf{Data-generating process.}
We model two regimes: a regular period with probability $p$ and a worst-case period with probability $1-p$, representing low-probability, high-impact events (e.g., a pandemic).
In regime $r\in\{r,w\}$ (regular / worst-case), the marginals follow uniform distributions
\[
h_k \sim \cU[\underline h_k^r,\overline h_k^r],\quad
q_j \sim \cU[\underline q_j^r,\overline q_j^r],\quad
w_j \sim \cU[\underline w_j^r,\overline w_j^r],
\]
for $k\in\cK:=\{1,2\}$ and $j\in\cJ$, and their dependence is modeled by a Gumbel copula with parameter $\lambda^r$ (regular) or $\lambda^w$ (worst-case). The exact parameter values for all instances are reported in Appendix \ref{sec:Experiment}. We do not include a classical DRO baseline because random recourse makes such formulations computationally prohibitive in this setting.

Our baseline instance has $|\cI|=20$ products and $|\cJ|=8$ departments. We have validated $M=3000$ is sufficient for the convergence of the bootstrap sampling method in our case (see Appendix \ref{apx:M-convergence}) and use $M = 5000$ for all subsequent experiments in this study.
Unless stated otherwise, we use Python~3.8 and Gurobi~11.0.3 on an
i7-11700@2.50GHz machine.
Throughout we denote by \ref{mod:SAA} the classical SAA model, which is a special case of \ref{mod:APUB-SP} with $(1-\alpha)=0$ (Remark~\ref{rem:APUB-DEF-OPT}).

\begin{figure*}[h!]
\begin{center}
\subfloat[$N = 120$  \label{fig:RR-N120}]{
\includegraphics[width=0.39\textwidth]{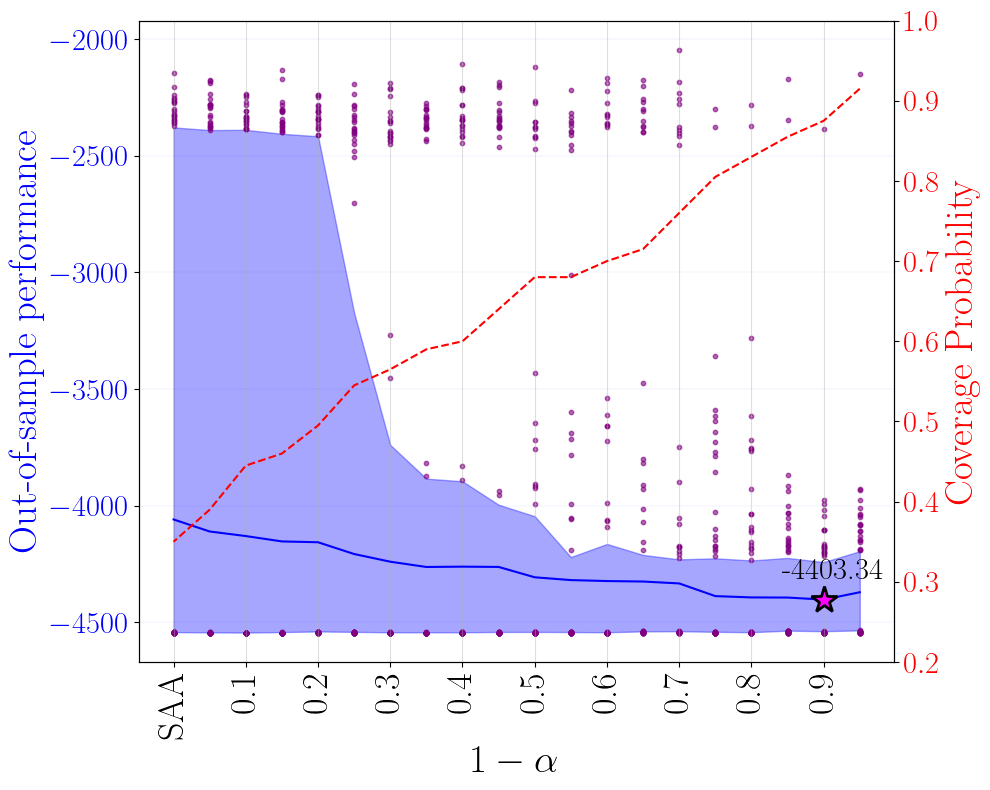}
}
\subfloat[$N = 240$ \label{fig:RR-N240}]{
\includegraphics[width=0.39\textwidth]{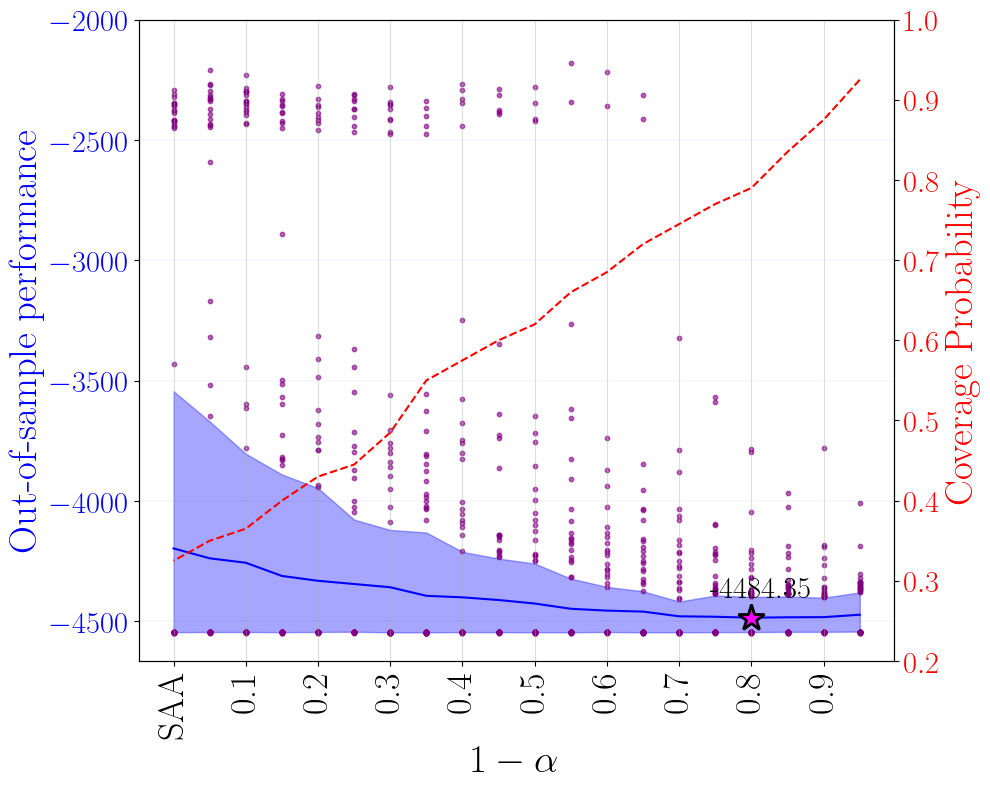}
}

\subfloat[$N = 480$ \label{fig:RR-N480}]{
\includegraphics[width=0.39\textwidth]{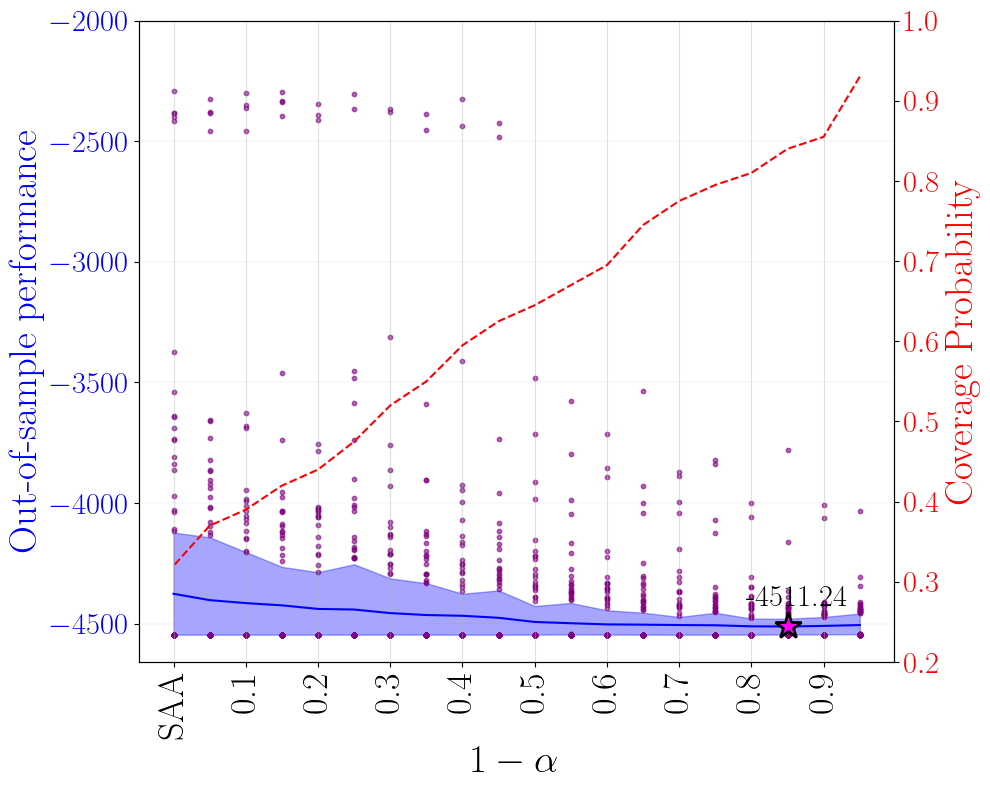}
}
\subfloat[$N = 960$ \label{fig:RR-N960}]{
\includegraphics[width=0.39\textwidth]{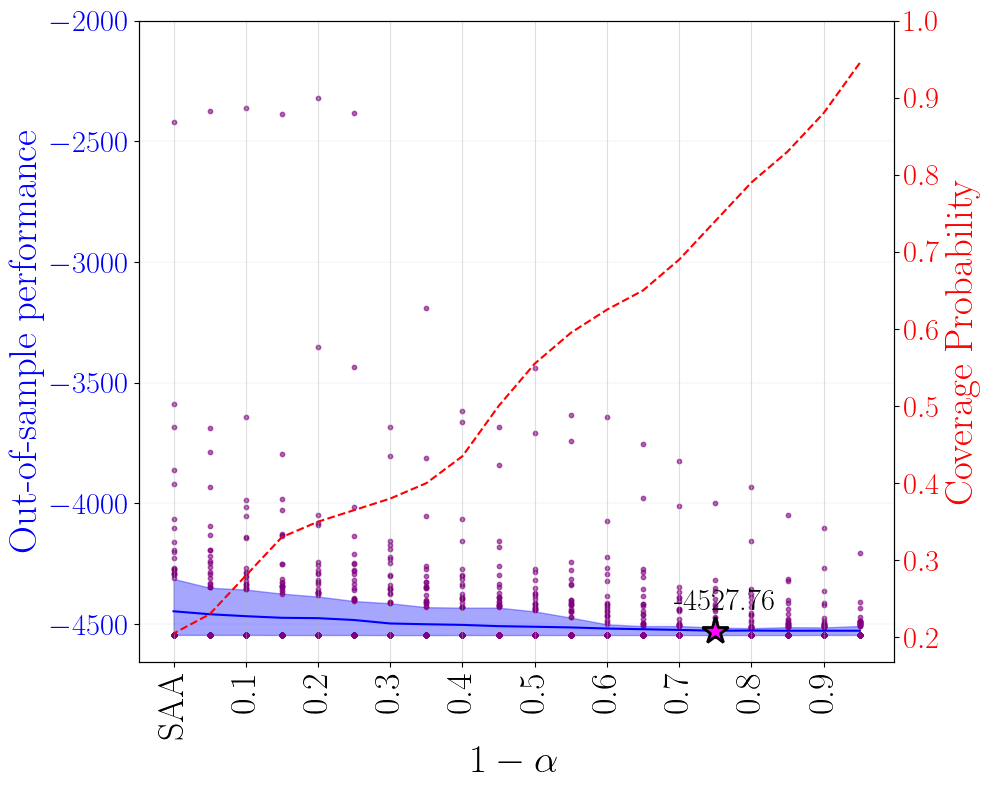}
}
\caption{Out-of-sample performance and coverage probability of \ref{mod:SAA} and \ref{mod:APUB-SP} for different sample sizes $N$. Solid lines: mean out-of-sample cost; shaded area: 10th–90th percentiles; dots: extremes; dashed lines: coverage probability. The star marks the minimum mean value.}
\label{fig:RR-APUB} 
\end{center}
\vspace{-5mm}
\end{figure*}
\begin{table*}[htbp]
\caption{Computational Analysis of the Baseline Problem (fixed $|\cI|$ = 20, $|\cJ|$ = 8, $(1 - \alpha)  = 0.9$). Mean $\pm$ std over 30 runs.}
\centering
\tiny
\setlength{\tabcolsep}{1.5pt}
\renewcommand{\arraystretch}{1.1}
\resizebox{\textwidth}{!}{%
\begin{tabular}{c 
                S S 
                S S 
                S S 
                S S}
\toprule
\multirow{2}{*}{\diagbox{M}{N}} & \multicolumn{2}{c}{120} & \multicolumn{2}{c}{240} & \multicolumn{2}{c}{480} & \multicolumn{2}{c}{960} \\ 
\cmidrule(lr){2-3} \cmidrule(lr){4-5} \cmidrule(lr){6-7} \cmidrule(lr){8-9}
 & {Time(s)} & {Iteration} & {Time(s)} & {Iteration} & {Time(s)} & {Iteration} & {Time(s)} & {Iteration} \\ 
\midrule
\multicolumn{9}{l}{\textbf{The standard L-shaped method solving \ref{mod:SAA}:}}  \\ 
  & 5.7\pm1.0 & 8.0\pm1.4 & 12.8\pm1.5& 8.9\pm0.6 & 25.3\pm4.5 & 9.3\pm0.6 & 47.2\pm6.6 & 10.0\pm0.3 \\ 
\midrule
\multicolumn{9}{l}{\textbf{Adapted L-shaped for \ref{mod:APUB-SP}:}}  \\ 
1000 & 7.5\pm1.2 & 9.9\pm1.5 & 17.6\pm2.6 & 10.0\pm1.4 & 35.8\pm5.3 & 10.8\pm1.3 & 73.8\pm7.5 & 11.6\pm1.4 \\  
3000 & 7.4\pm1.3 & 9.6\pm1.5 & 18.0\pm2.6 & 10.2\pm1.3 & 36.7\pm7.3 & 11.0\pm1.6 & 75.3\pm10.0 & 11.8\pm1.3 \\ 
5000 & 7.2\pm1.2 & 9.2\pm1.4 & 18.1\pm2.4 & 10.3\pm1.3 & 36.4\pm5.6 & 11.2\pm1.5 & 74.1\pm7.8 & 11.5\pm1.1 \\ 
\bottomrule
\end{tabular}
}
\label{tab: al1_time_base}
\end{table*}

\begin{table*}[htbp]
\caption{Computational results for varying problem dimensions ($N=120$, $M=5000$, $(1-\alpha)=0.9$). Mean $\pm$ std over 30 runs.}
\centering
\tiny
\setlength{\tabcolsep}{0.2pt}
\renewcommand{\arraystretch}{1.2}
\resizebox{0.9\textwidth}{!}{%
\begin{tabular}{
                S S 
                S S 
                S S 
                S S}
\toprule
 \multicolumn{2}{c}{$|\cI|=10,|\cJ|=4$} &
 \multicolumn{2}{c}{$|\cI|=20,|\cJ|=8$} &
 \multicolumn{2}{c}{$|\cI|=40,|\cJ|=16$} &
 \multicolumn{2}{c}{$|\cI|=80,|\cJ|=32$} \\ 
\cmidrule(lr){1-2} \cmidrule(lr){3-4} \cmidrule(lr){5-6} \cmidrule(lr){7-8}
 {Time(s)} & {Iteration} & {Time(s)} & {Iteration} &
 {Time(s)} & {Iteration} & {Time(s)} & {Iteration} \\ 
\midrule
\multicolumn{8}{l}{\textbf{Standard L-shaped for \ref{mod:SAA}:}}  \\ 
1.3\pm0.1 & 5.0\pm0.9 & 5.7\pm1.0 & 8.0\pm1.4 &
25.3\pm4.5 & 8.3\pm0.6 & 58.1\pm7.0 & 8.2\pm0.5 \\ 
\midrule
\multicolumn{8}{l}{\textbf{Adapted L-shaped for \ref{mod:APUB-SP}:}}  \\ 
2.1\pm0.4 & 6.8\pm0.9 & 7.2\pm1.2 & 9.2\pm1.4 &
36.7\pm7.3 & 8.7\pm1.6 & 65.3\pm6.6 & 7.8\pm0.6 \\ 
\bottomrule
\end{tabular}
}
\label{tab: al1_time_dimension}
\end{table*}

\subsection{Out-of-Sample Performance and Coverage Probability}
\label{subsec:Out-of-Sample}

We now compare the out-of-sample behavior of \ref{mod:SAA} and \ref{mod:APUB-SP}.
The experiment runs 200 Monte Carlo replications.
In each replication we draw an i.i.d.\ training sample of size $N\in\{120,240,480,960\}$, solve both models at various nominal levels to obtain the optimal value $\vartheta_N(\alpha)$ and first-stage solution $x_N(\alpha)$, and then estimate the true expected cost
$c^\intercal x_N(\alpha) + \bE[Q(x_N(\alpha),\xi)]$ on an independent test set of $5000$ points.

Figure~\ref{fig:RR-APUB} summarizes the results.
For each $N$, the solid line shows the mean out-of-sample cost, the shaded region the 10th–90th percentiles, dots indicate extreme realizations outside this band, and the dashed line reports the empirical coverage probability $\beta(\vartheta_N(\alpha),\{x_N(\alpha)\})$ in~\eqref{eq:beta}.

\textbf{Small samples and robustness.}
For small $N$ (120 and 240), epistemic uncertainty is high and \ref{mod:SAA} is unstable: the mean cost is large, the percentile band is wide, and many extreme points appear.
As $(1-\alpha)$ increases, \ref{mod:APUB-SP} markedly improves both mean and worst-case performance and reduces dispersion.
Notably, an appropriately chosen nominal level with $N=120$ already outperforms \ref{mod:SAA} trained with $N=240$, illustrating the robustness gains from APUB under limited data.

\textbf{Large samples and asymptotic behavior.}
When $N$ grows to 480 and 960, both methods improve: mean costs decrease, bands shrink, and worst-case performance is much better than in the small-$N$ cases.
The performance curves flatten as $N$ increases, and the gap between \ref{mod:APUB-SP} and \ref{mod:SAA} narrows, in line with the asymptotic consistency of \ref{mod:APUB-SP} in Theorem~\ref{thm:ConsistentAPUB-SP}.
This behavior also shows that APUB naturally avoids excessive conservatism when more data are available; there is no need to manually rescale the nominal level.

\textbf{Coverage probability and correctness.}
The empirical coverage probability tracks the nominal level closely: for \ref{mod:APUB-SP}, it increases approximately linearly with $(1-\alpha)$ and, for large $N$, exceeds the nominal level, confirming the asymptotic correctness of Theorem~\ref{thm:APUB-SP-Correct}.
Interpreting $(1-\alpha)$ as the desired confidence level therefore provides a simple, problem-agnostic guideline for tuning.
In contrast, \ref{mod:SAA} (corresponding to $(1-\alpha)=0$) maintains a very low coverage probability even as $N$ grows, offering little practical value despite being technically covered by the same theorem.

Overall, \ref{mod:APUB-SP} delivers significantly better out-of-sample performance and more reliable bounds than \ref{mod:SAA} in the data-scarce regime, while smoothly converging to similar performance as $N$ increases.
Choosing the nominal level that minimizes the mean cost (the star in Figure~\ref{fig:RR-APUB}) is nontrivial; in practice, standard cross-validation is a natural approach, and developing principled selection rules is an interesting direction for future work.

\subsection{Computational Analysis}
We next assess the computational performance of our adpated L-shaped Algorithm and compare it to the standard one applied to \ref{mod:SAA}. We consider two sets of experiments.
In the first, we fix the baseline dimensions $|\cI|=20$, $|\cJ|=8$ and $(1-\alpha)=0.9$, and vary $N\in\{120,240,480,960\}$ and $M\in\{1000,\dots,5000\}$.
In the second, we fix $N=120$, $M=5000$, $(1-\alpha)=0.9$ and vary $(|\cI|,|\cJ|)\in\{(10,4),(20,8),(40,16),(80,32)\}$.
For each configuration we use 30 randomly selected instances from those generated in Section~\ref{subsec:Out-of-Sample}.

Table~\ref{tab: al1_time_base} reports the mean runtime and iteration count.
For both the standard L-shaped method and our adapted one, runtime scales roughly linearly with $N$, reflecting the number of second-stage subproblems, while the number of iterations grows only mildly and remains around 10.
In contrast, increasing $M$ has little impact on the runtime, confirming the effectiveness of the sorting-based evaluation of the APUB term.

Table~\ref{tab: al1_time_dimension} shows the effect of scaling the number of products and departments.
The both methods become more expensive as $|\cI|$ and $|\cJ|$ increase, due to larger second-stage subproblems, but the number of iterations remains small and fairly insensitive to the dimension.
Across all settings, our adapted L-shaped method is only moderately slower than the standard one for \ref{mod:SAA}.

In summary, the adapted L-shaped method offers a favorable trade-off between robustness and computational effort.
It scales well with both the number of samples and problem dimensions, making APUB-based two-stage models practical for large-scale stochastic optimization problems.

\subsection{Comparison with Wasserstein DRO}\label{subsec:2SFR}
\begin{figure*}[htbp]
\centering

\def\figw{0.33\textwidth}
\def\figgap{0\textwidth}

\makebox[\textwidth][c]{%
\subfloat[\footnotesize{$N = 30$, \ref{mod:APUB-SP}}\label{fig:FR-APUB-N30}]{%
\includegraphics[width=\figw]{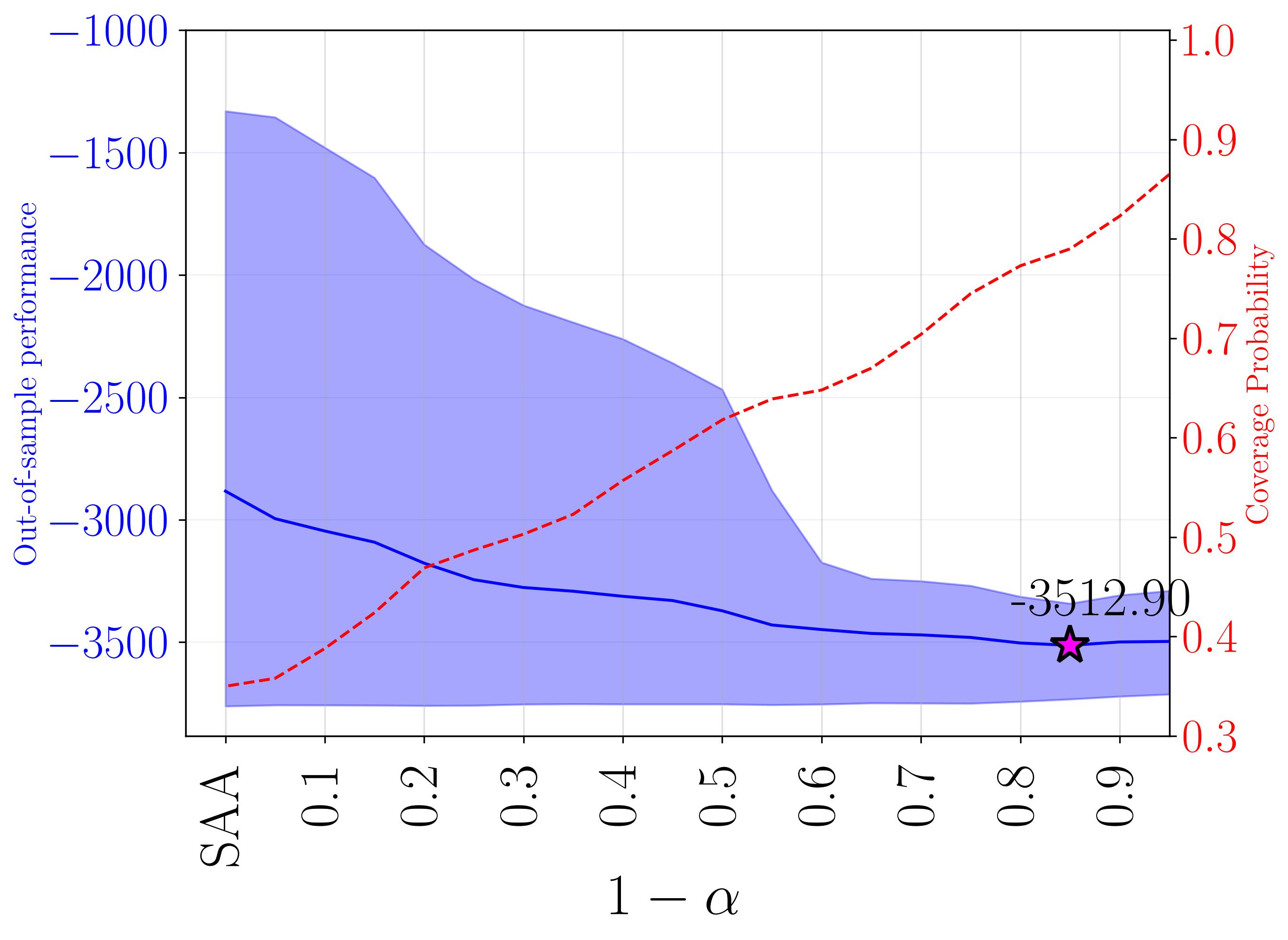}%
}%
\hspace{\figgap}%
\subfloat[\footnotesize{$N = 120$, \ref{mod:APUB-SP}}\label{fig:FR-APUB-N120}]{%
\includegraphics[width=\figw]{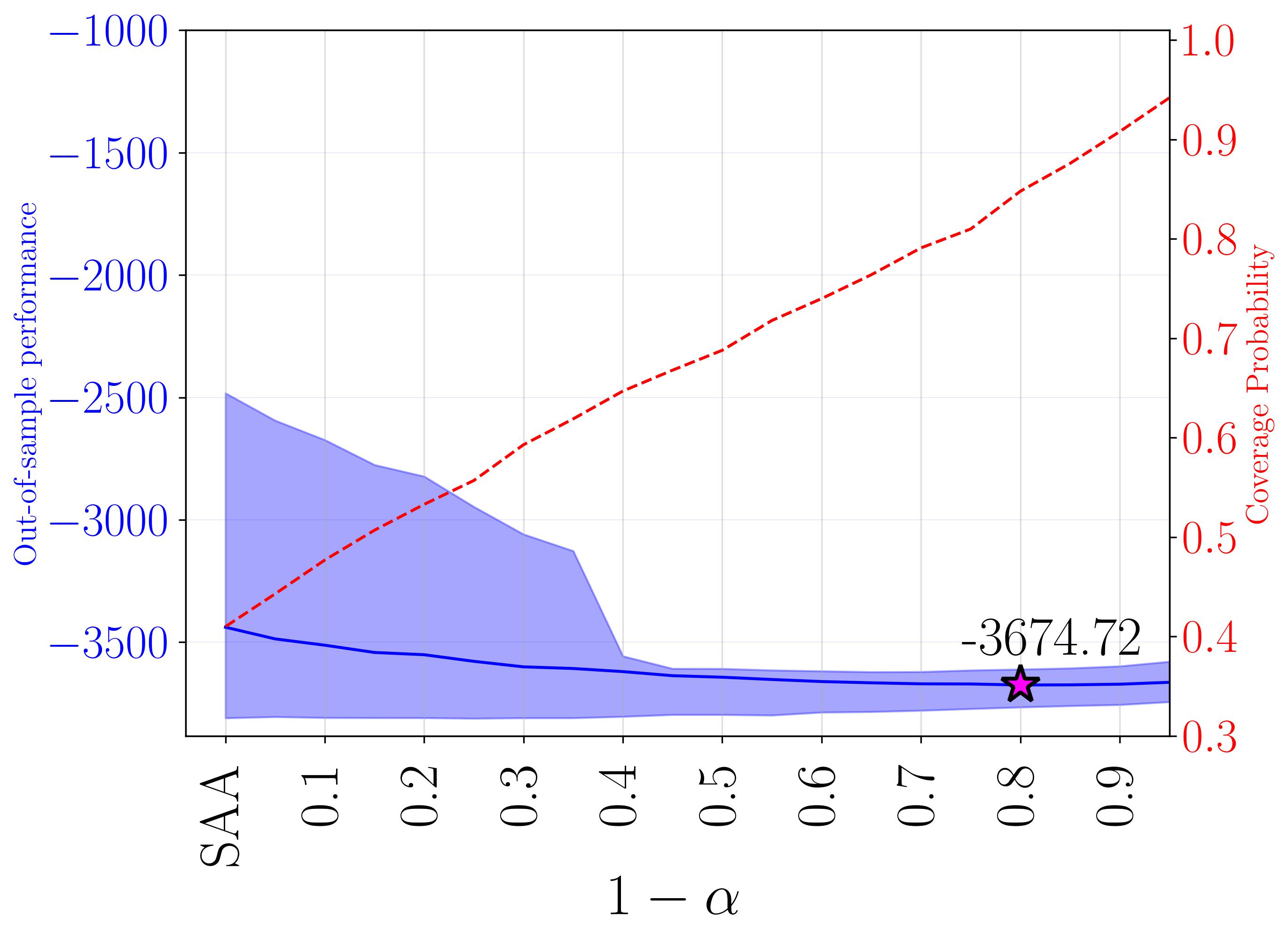}%
}%
\hspace{\figgap}%
\subfloat[\footnotesize{$N = 480$, \ref{mod:APUB-SP}}\label{fig:FR-APUB-N480}]{%
\includegraphics[width=\figw]{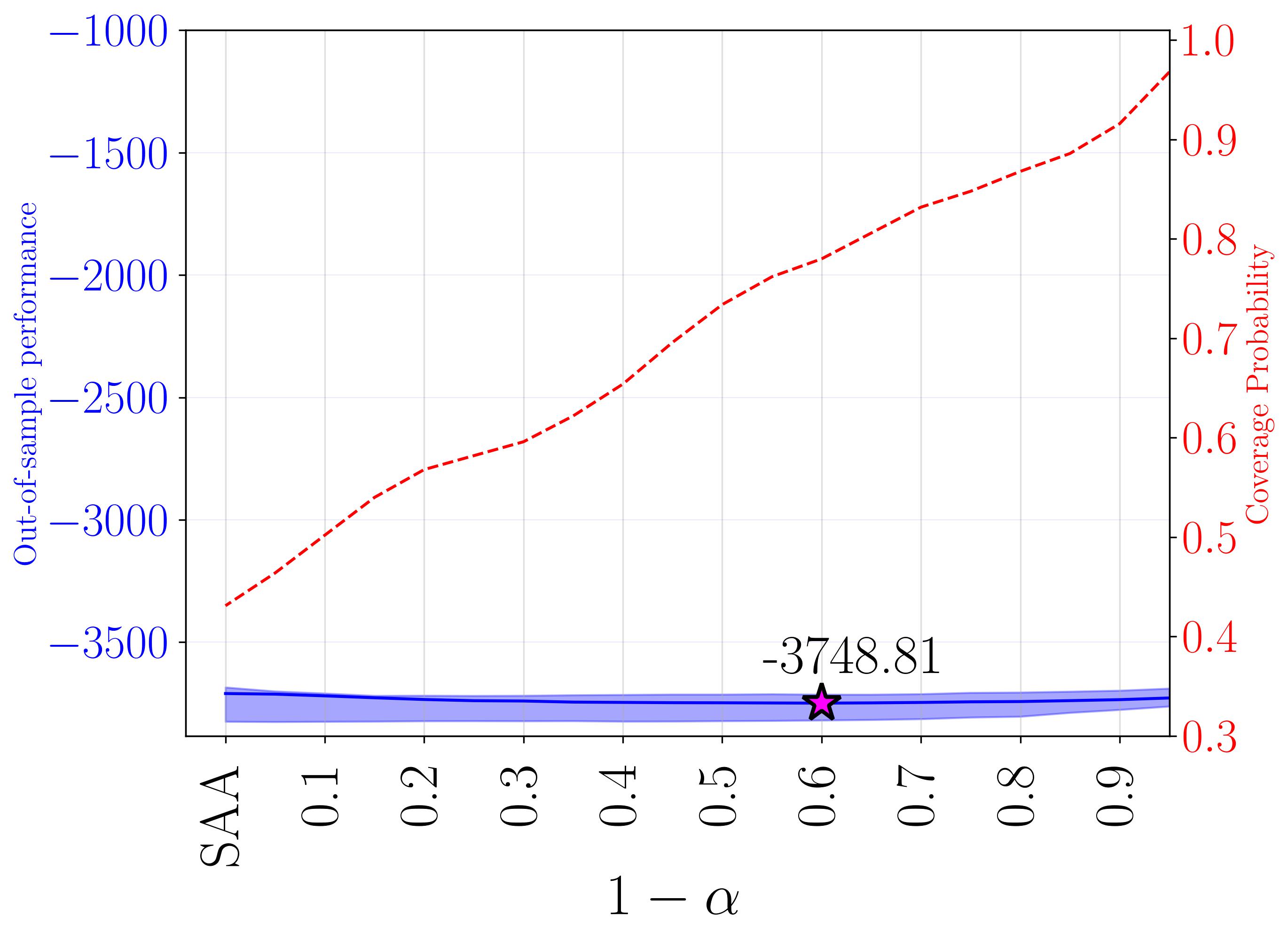}%
}%
}

\vspace{0.35em}

\makebox[\textwidth][c]{%
\subfloat[\footnotesize{$N = 30$, WassDRO}\label{fig:FR-Wass-N30}]{%
\includegraphics[width=\figw]{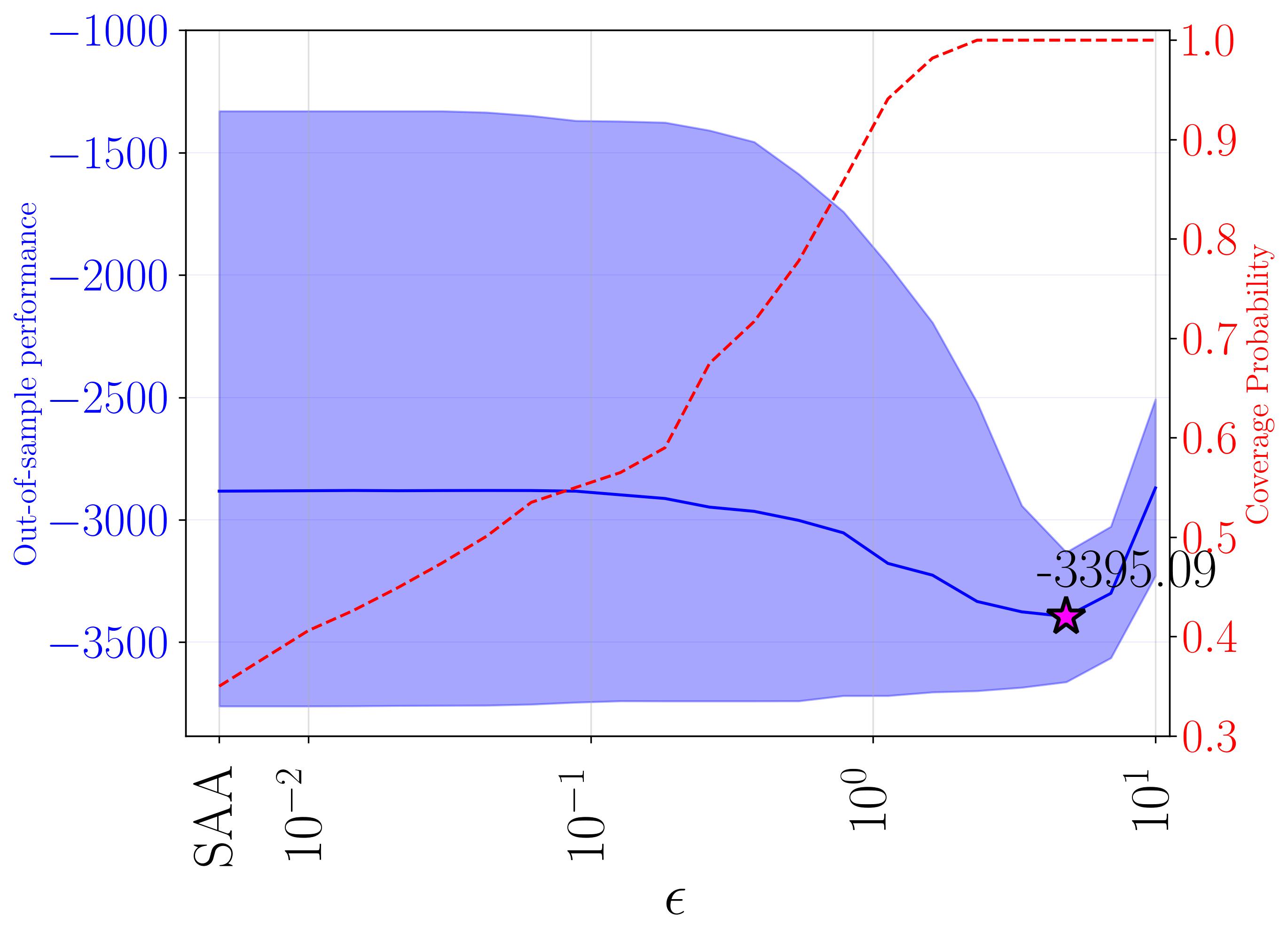}%
}%
\hspace{\figgap}%
\subfloat[\footnotesize{$N = 120$, WassDRO}\label{fig:FR-Wass-N120}]{%
\includegraphics[width=\figw]{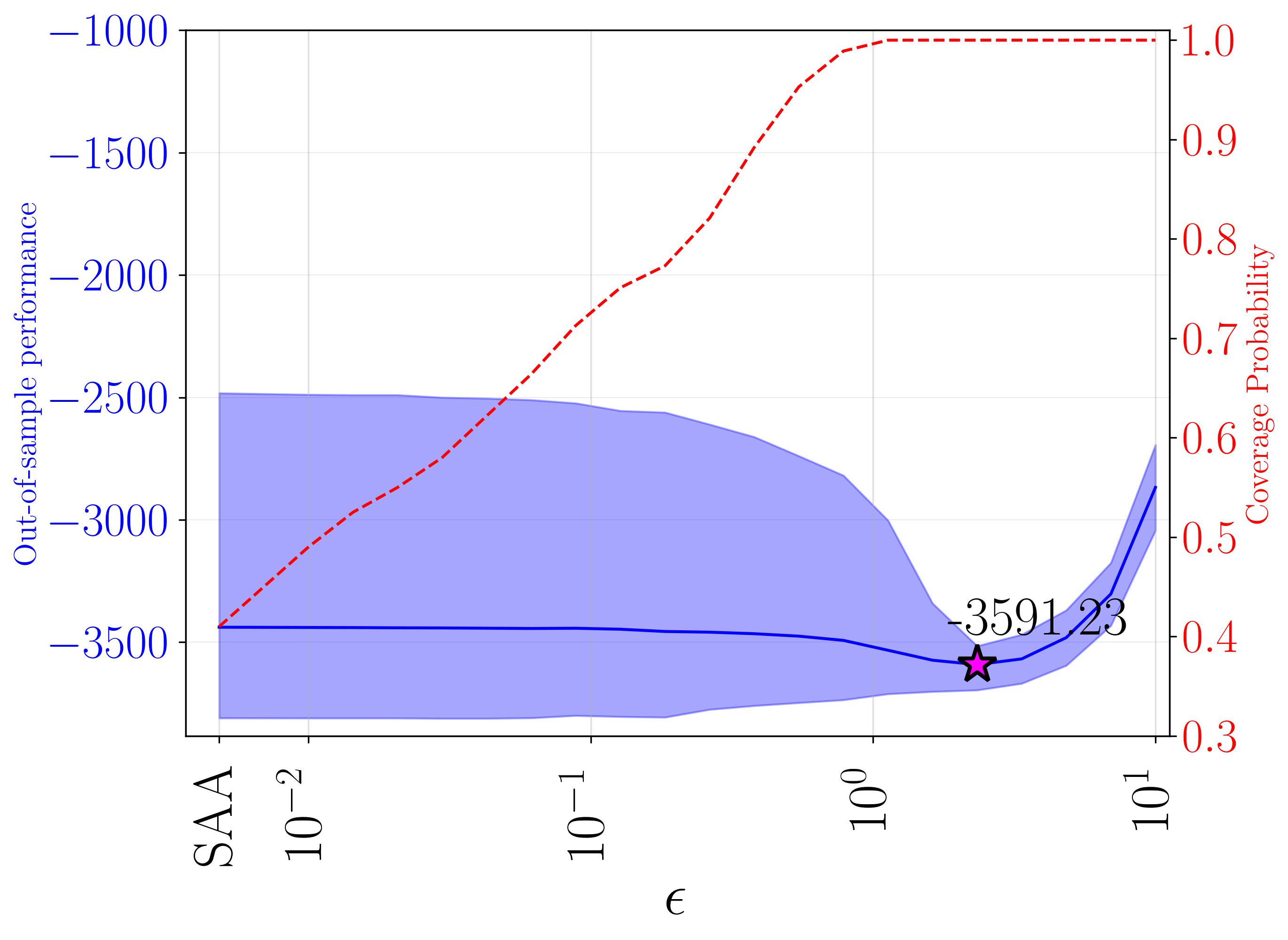}%
}%
\hspace{\figgap}%
\subfloat[\footnotesize{$N = 480$, WassDRO}\label{fig:FR-Wass-N480}]{%
\includegraphics[width=\figw]{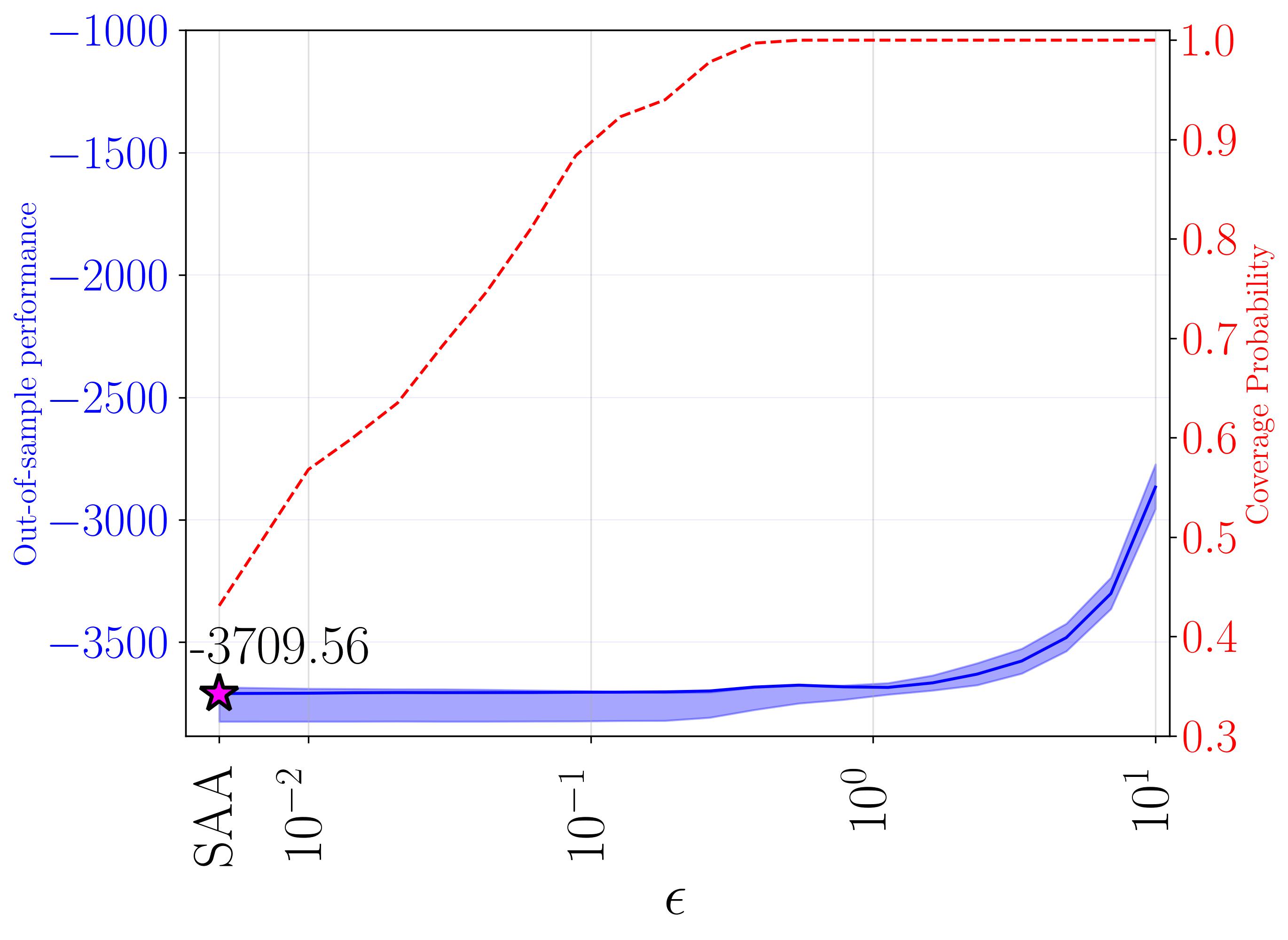}%
}%
}
\caption{Mean out-of-sample cost (solid line), 10th--90th percentile band (shaded area), and empirical coverage probability (dashed line) for \ref{mod:APUB-SP} and WassDRO. Stars mark the minimum mean cost.}
\label{fig:FR-APUB}
\end{figure*}

To compare \ref{mod:APUB-SP} with Wasserstein DRO (WassDRO), we fix the recourse in the second stage of the product-mix problem. Our key observations in Figure~\ref{fig:FR-APUB} are highlighted as follows:

\textbf{Statistical Interpretability vs. Geometric Tuning:}
\ref{mod:APUB-SP} utilizes a nominal confidence level $(1-\alpha)$ with a rigorous frequentist interpretation. In contrast, selecting a statistically meaningful radius $\epsilon$ for WassDRO remains a significant challenge, often requiring heuristic-based calibration. As shown in Figures \ref{fig:FR-APUB-N30}--\ref{fig:FR-APUB-N480}, \ref{mod:APUB-SP} demonstrates asymptotic correctness, where the empirical coverage probability (dashed line) consistently converges toward the nominal confidence level $(1-\alpha)$ as $N$ increases. This provides a transparent uncertainty knob that is missing in geometric DRO approaches.

\textbf{Consistent Data-Driven Convergence:} 
\ref{mod:APUB-SP} is inherently data-driven, achieving consistency for any fixed $(1-\alpha)$ as the bootstrap distribution naturally contracts with $N$. Conversely, WassDRO relies on a radius $\epsilon$ that typically requires manual $N$-dependent scaling to prevent vanishing consistency or extreme over-conservatism. Figures \ref{fig:FR-APUB-N30}--\ref{fig:FR-APUB-N480} illustrate that \ref{mod:APUB-SP} automatically modulates its robustness: as epistemic uncertainty diminishes with larger $N$, the \ref{mod:APUB-SP} objective naturally converges toward the true mean without manual parameter decay.

\textbf{Controlled Conservatism and Reliability:} 
In low-data regimes ($N=30$), worst-case scenarios are often under-represented. \ref{mod:APUB-SP} explicitly accounts for the epistemic uncertainty of these tail events, ensuring reliability without the structural over-conservatism seen in WassDRO. As shown in Figure \ref{fig:FR-Wass-N480}, an inappropriate choice of $\epsilon$ in WassDRO leads to a sharp increase in mean cost (over-conservatism). \ref{mod:APUB-SP} maintains a more stable performance profile by anchoring its robustness to the statistical variance of the mean rather than an arbitrary geometric ball.

\section{Conclusions}\label{sec:Conclusion}
In this work, we introduced APUB, a novel framework for quantifying epistemic uncertainty in data-driven optimization. By leveraging a bootstrap-based CVaR representation, we established that \ref{mod:APUB-SP} provides a statistically interpretable uncertainty knob that avoids the over-conservatism and manual tuning inherent in geometric DRO approaches. Our theoretical analysis proves the asymptotic correctness and consistency of the framework, while our adapted L-shaped algorithm ensures computational tractability for complex two-stage problems with random recourse. Empirical results confirm that \ref{mod:APUB-SP} significantly enhances prescriptive stability and out-of-sample reliability in low-data regimes. Future research will explore the extension of the APUB framework to multi-stage stochastic programs and non-convex decision spaces.

\section*{Acknowledgements}
This work was supported by the National Science Foundation (NSF) under Award No. 2432256. The authors would like to express their sincere gratitude to the four anonymous reviewers for their insightful comments and constructive feedback.

\newpage
\section*{Impact Statement}
This research contributes to the broader field of reliable AI by providing a mathematically grounded tool for decision-making under data scarcity. 
From a \textbf{societal perspective}, APUB-M enhances the resilience of critical infrastructure, such as energy grids and supply chains, where optimizer's curse failures can have significant economic or safety consequences. 
Regarding \textbf{accessibility}, the framework democratizes robust optimization by replacing opaque geometric parameters with transparent, frequentist confidence levels, allowing domain experts in fields like healthcare and public policy to calibrate risk without deep expertise in optimization theory. 
Finally, we acknowledge \textbf{ethical considerations}: while APUB-M provides safety against sampling error, it does not inherently correct for systemic biases present in historical data. We advocate for the use of this framework in conjunction with rigorous data auditing to ensure that robust prescriptions do not inadvertently perpetuate existing inequities.

\bibliography{APUB}
\bibliographystyle{icml2026}

\newpage
\appendix
\renewcommand{\theequation}{A-\arabic{equation}}
\setcounter{equation}{0}
\onecolumn


\section{L-shaped Algorithm for the Two-Stage APUB-M}
\label{section:aglorithm}

We restate the Adapted L-Shaped Algorithm in Section~\ref{sec:The Solution Method} as Algorithm \ref{alg:L-shaped}. 

\vspace{-2mm}

\begin{algorithm}[ht]
\caption{L-shaped algorithm for the two-stage APUB-M problem~\eqref{mod:first-bs}.}
\label{alg:L-shaped}
\begin{algorithmic}[1]
\REQUIRE Data $\{(q_n,W_n,T_n,h_n)\}_{n=1}^N$, bootstrap weights $V\in\bR^{M\times N}$, level $\alpha\in(0,1)$.
\ENSURE Optimal first-stage solution $x^\star$.
\STATE Initialize $\ell\gets 0$, $k\gets 0$.
\WHILE{true}
    \STATE Solve the master problem~\eqref{alg:master} to obtain $(\hat x,\hat\eta)$.
    \STATE Set \texttt{infeas} $\gets$ false.
    \FOR{$n=1,\ldots,N$}
        \STATE Solve the feasibility problem~\eqref{alg:feasibility} and compute $u_n(\hat x)$.
        \IF{$u_n(\hat x) > 0$}
            \STATE Extract multipliers $\phi_n$ and add the feasibility cut,
                   $D_{\ell+1}x \ge d_{\ell+1}$, with
                   $D_{\ell+1}=\phi_n^\top T_n$, $d_{\ell+1}=\phi_n^\top h_n$.
            \STATE Update $\ell\gets \ell+1$; \texttt{infeas} $\gets$ true.
            \STATE \textbf{break}
        \ENDIF
    \ENDFOR
    \IF{\texttt{infeas}}
        \STATE \textbf{continue} \COMMENT{Resolve the updated master problem.}
    \ENDIF
    \FOR{$n=1,\ldots,N$}
        \STATE Solve for $Q(\hat x,\xi_n)$ and multipliers $\psi_n$.
    \ENDFOR
    \STATE Compute $r_m(\hat x)$ via~\eqref{alg:r_m} and sort them to obtain
           $r_{(1)}(\hat x)\le\cdots\le r_{(M)}(\hat x)$.
    \STATE Set $J\gets \lceil (1-\alpha)M\rceil$ and compute $\hat\lambda$ from~\eqref{alg:w(hat x)}.
    \IF{$\hat\eta \ge \hat\lambda$}
        \STATE \textbf{return} $\hat x$ as $x^\star$.
    \ENDIF
    \STATE Compute $(E_{k+1},e_{k+1})$ using~\eqref{eq:apub-optcut-coef} and add the optimality cut,
           $E_{k+1}x + \eta \ge e_{k+1}$, to~\eqref{alg:master_optimal_cuts}.
    \STATE Update $k\gets k+1$.
\ENDWHILE
\end{algorithmic}
\end{algorithm}

\section{Comparison between the Standard Large-Sample UCB, Efron's Percentile UCB and APUB} \label{eg:APUB-Efron}
    Let  $\xi \sim \text{Gamma}(2, 1)$ and $F(\xi) = \xi$. So the population mean $\mu =2$. We compare APUB with Efron's percentile-based upper bound and the standard large-sample upper bound given as $\widehat{\mu}_N + z_\alpha \widehat{\sigma}_N / \sqrt{N}$, where $z_\alpha$ denotes $z$ critical value. In order to estimate the probability density functions (pdf) of three upper bounds, we performed a Monte Carlo simulation with $\alpha = 0.05$ while allowing the sample sizes, $N$, to vary from 80 to 10,000. 
    
\begin{figure}[htbp] 
\centering
\begin{adjustbox}{center}
{\resizebox{\linewidth}{!}{
\subfloat[Standard Upper Bound]{ 
\includegraphics[width=0.33\textwidth]{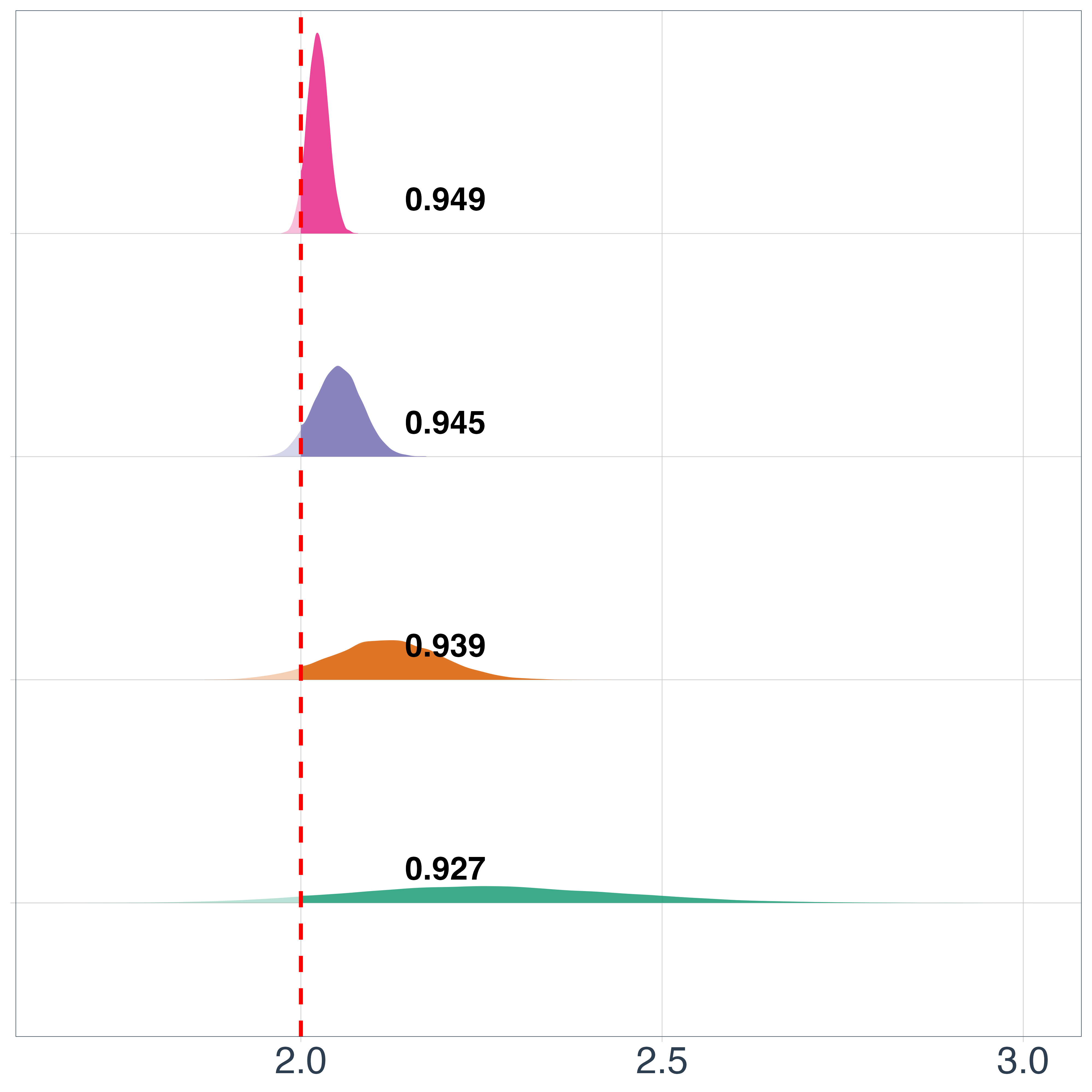} \label{fig:example-CLT}
}
\subfloat[Efron's Upper Bound]{
\includegraphics[width=0.33\textwidth]{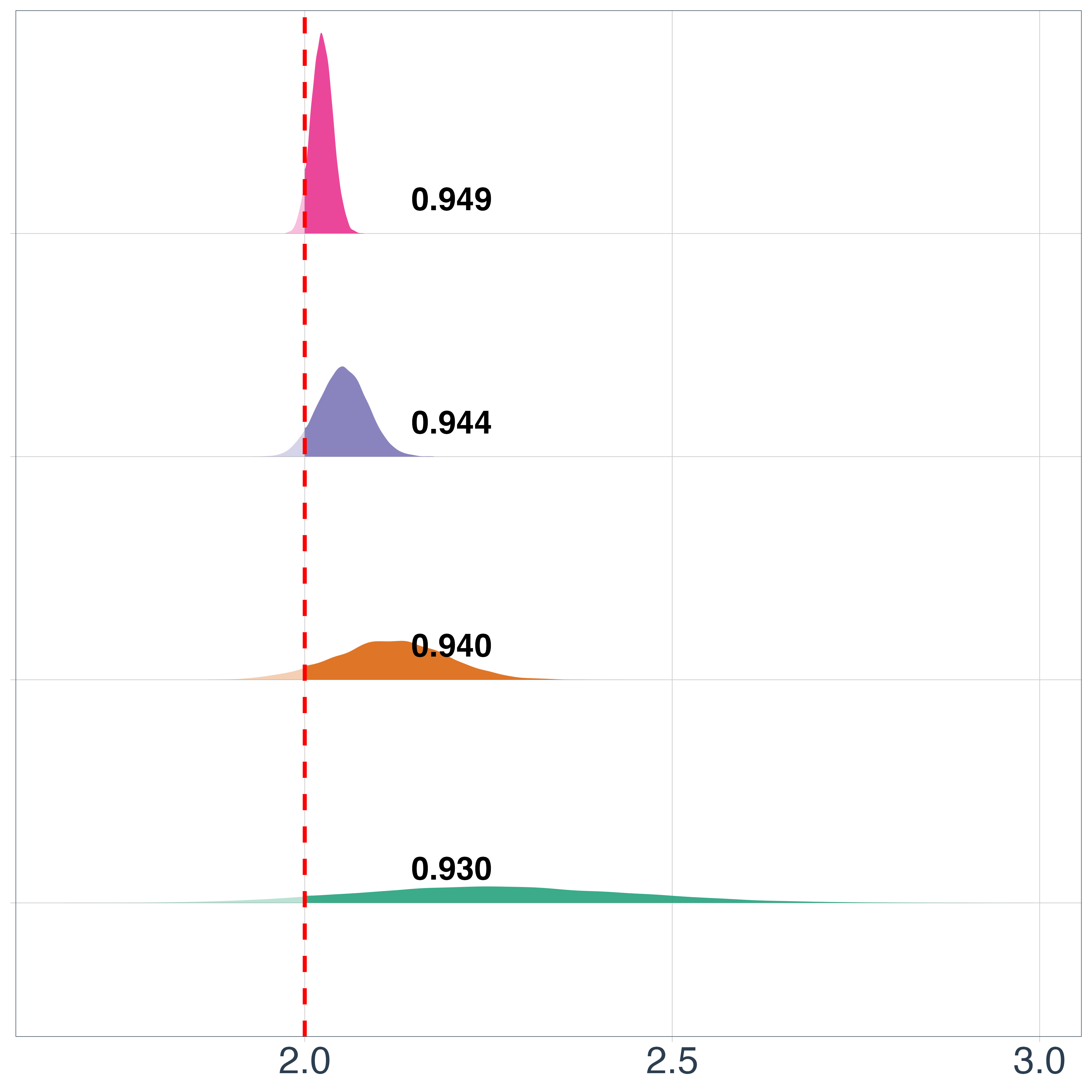}\label{fig:example-VaR}
}
\subfloat[APUB]{
\includegraphics[width=0.33\textwidth]{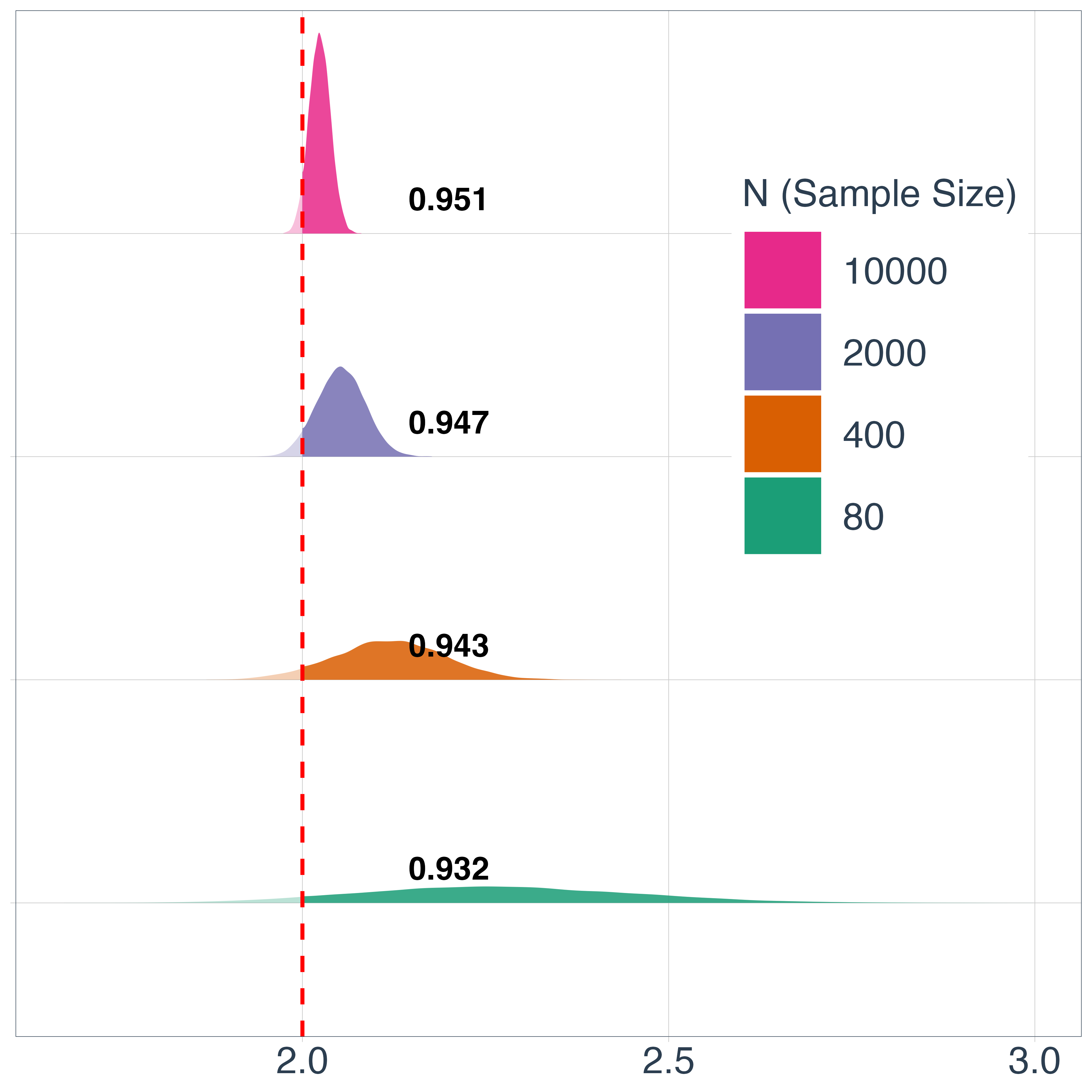}\label{fig:example-CVaR}
}
}}
\end{adjustbox}
\caption{The comparison between APUB, Efron's upper bound, and the standard large-sample upper bound.} \label{fig:example}
\end{figure}
We make two observations:
\begin{itemize}[leftmargin=1.5em, topsep=1pt, noitemsep]
    \item \textbf{APUB exhibits asymptotic correctness as an upper confidence bound.} As the sample size increases, the coverage probability of APUB remains above the nominal level, rather than converging exactly to it. By contrast, Efron's bound and the standard upper bound become closer to the nominal level. This highlights that APUB is asymptotically correct, but not asymptotically exact.

    \item \textbf{APUB is asymptotically consistent.} As the sample size increases, their sampling distributions become more concentrated around the true mean. In particular, APUB becomes increasingly tight and converges to the correct limit, consistent with the asymptotic consistency established in the main paper.
\end{itemize}
\section{Convergence of Bootstrap Sampling Approximation.}\label{apx:M-convergence}

We now assess the convergence of the bootstrap sampling method applied to \ref{mod:APUB-SP}. Our evaluation encompasses 10 independent simulations, each producing $N = 120$ sample data points. Throughout these tests, we maintain a consistent nominal level of $(1 - \alpha) = 0.9$. Figure \ref{fig:select-M} illustrates the relationship between the number $M$ of bootstrap samples and the optimal values of the bootstrap approximation, with $M$ reaching up to 8000. The data shows a clear stabilization trend: as $M$ increases, the variability in the optimal values visibly decreases. The convergence of the approximation becomes evident for $M \ge 3000$, where the fluctuation in the optimal values lessens significantly. This consistency supports our decision to use $M = 5000$ for all subsequent experiments in this section.

\begin{figure}[htbp]
    \centering
    \includegraphics[width=0.4\linewidth]{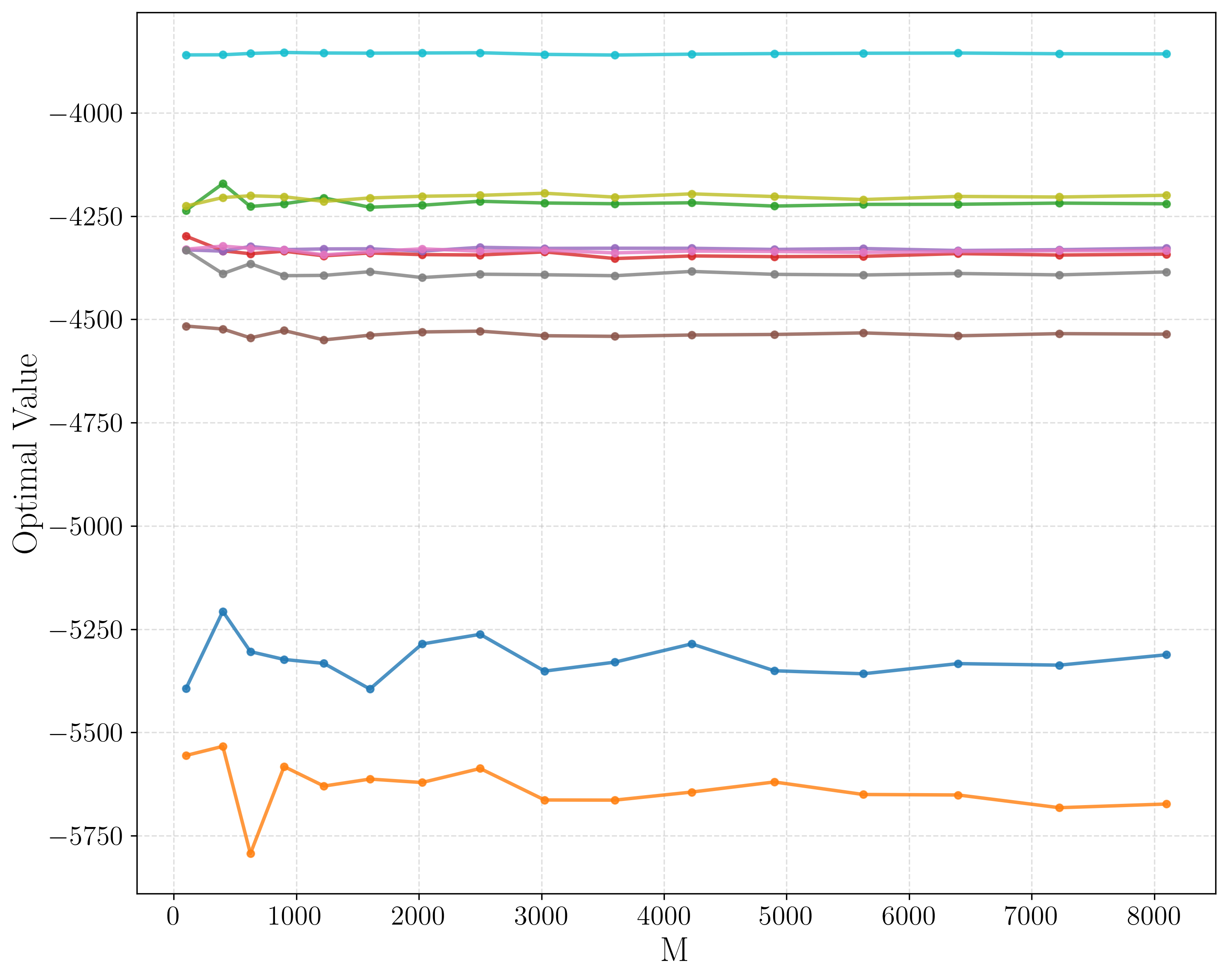}
    \caption{Convergence of the bootstrap sampling approximation.}
    \label{fig:select-M}
\end{figure}

\section{Sensitivity Analysis of \ref{mod:APUB-SP} on Multi-Product Newsvendor Problem}\label{subsec:newsvendor}

\begin{figure}[htbp]
\centering
\subfloat[\footnotesize{Case I, $N = 30$}\label{fig:News-Small-N30}]{
\includegraphics[width=0.45\textwidth]{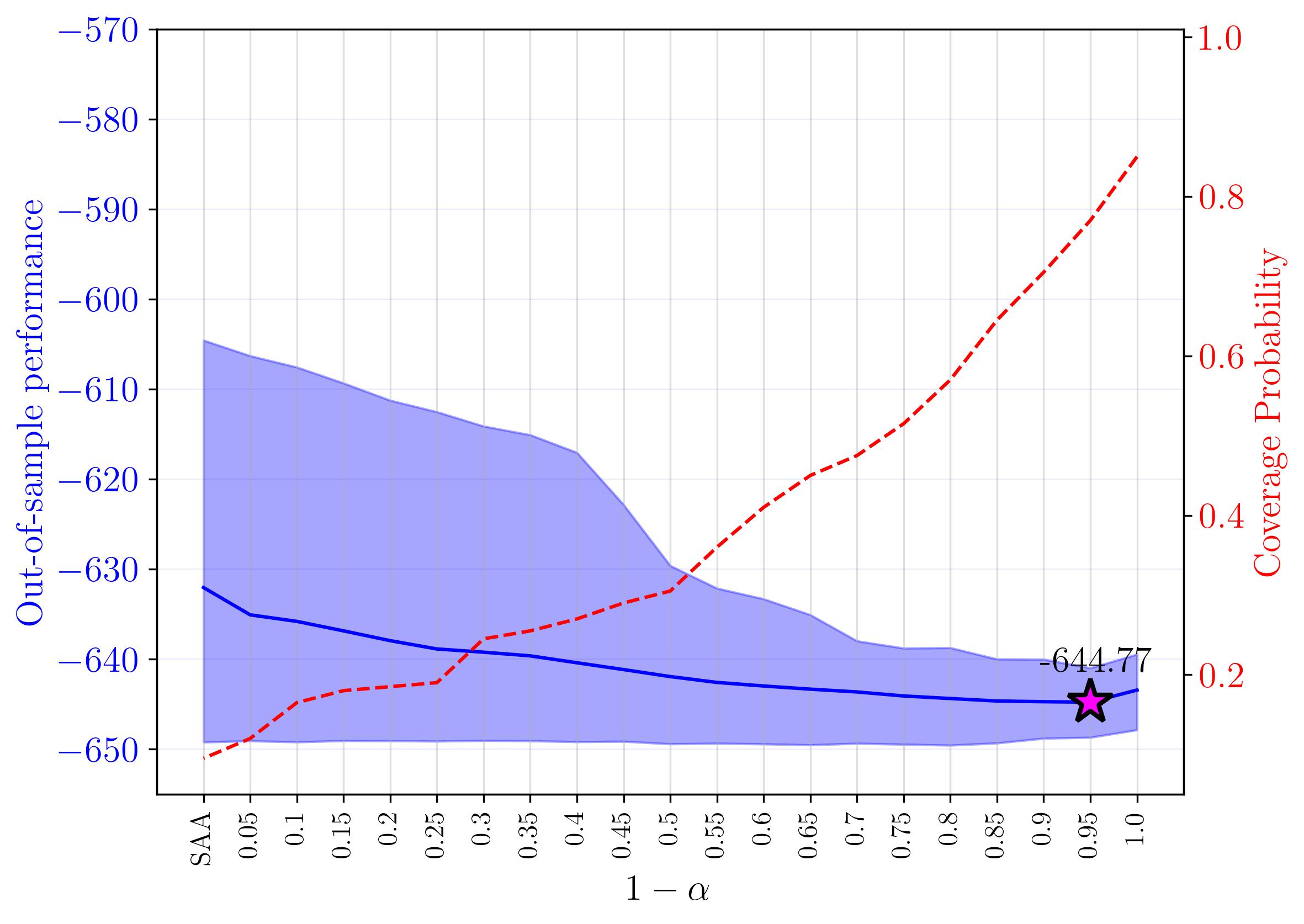}
}
\subfloat[\footnotesize{Case II, $N = 30$}\label{fig:News-Large-N30}]{
\includegraphics[width=0.45\textwidth]{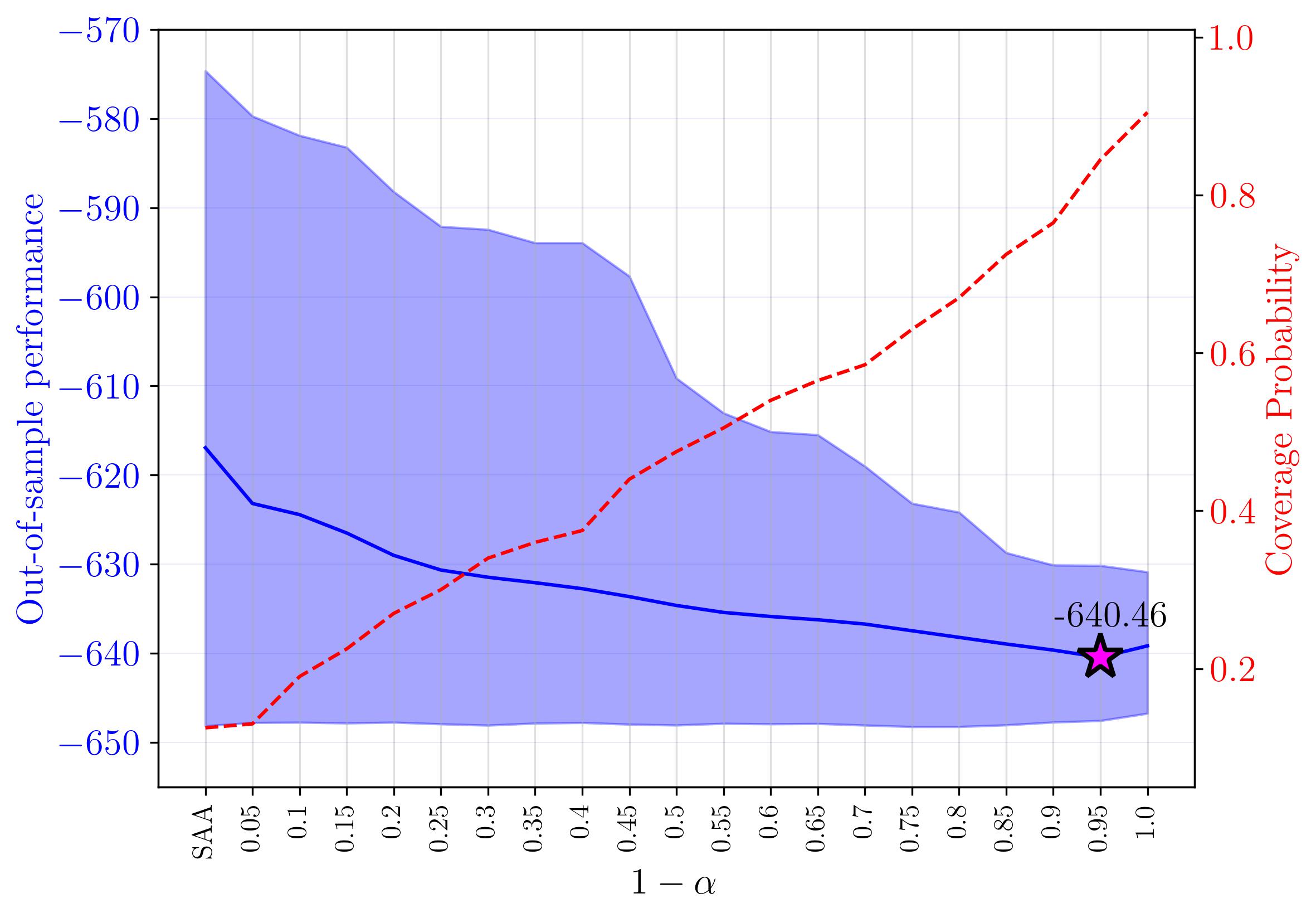}
}
\renewcommand{\thesubfigure}{b}
\subfloat[\footnotesize{Case I, $N = 60$}\label{fig:News-Small-N60}]{
\includegraphics[width=0.45\textwidth]{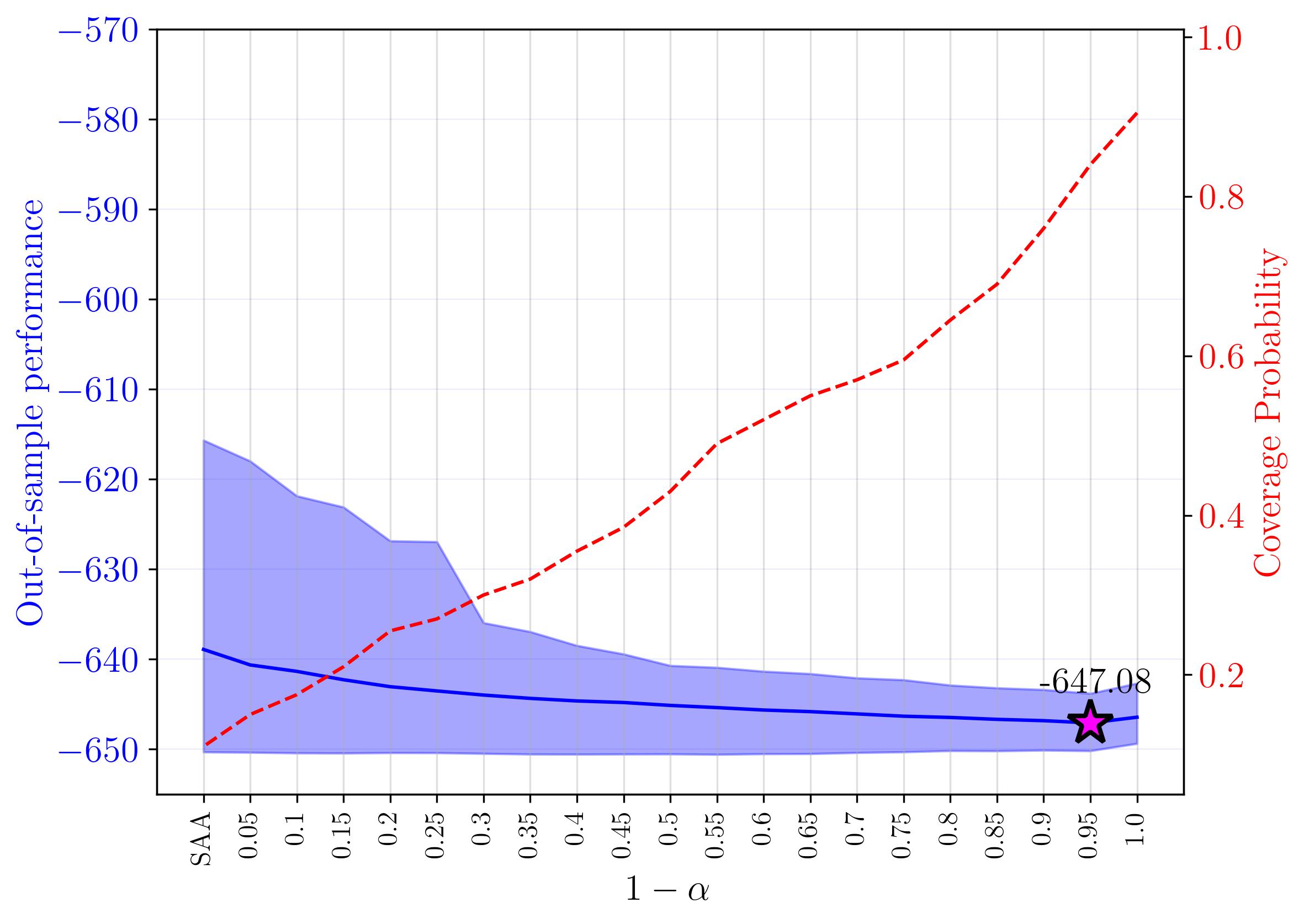}
}
\renewcommand{\thesubfigure}{e}
\subfloat[\footnotesize{Case II, $N = 60$}\label{fig:News-Large-N60}]{
\includegraphics[width=0.45\textwidth]{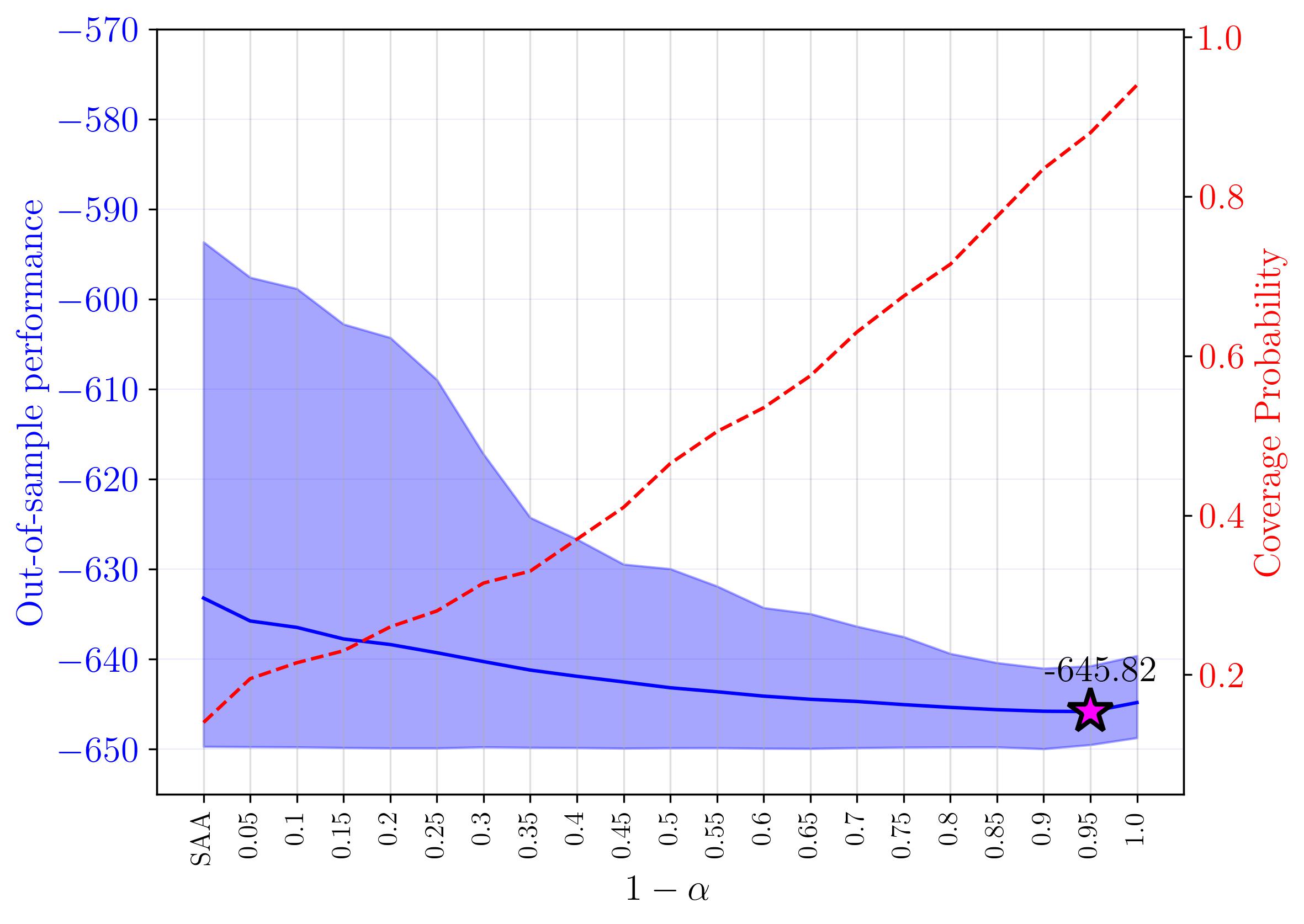}
}
\renewcommand{\thesubfigure}{c}
\subfloat[\footnotesize{Case I, $N = 120$}\label{fig:News-Small-N120}]{
\includegraphics[width=0.45\textwidth]{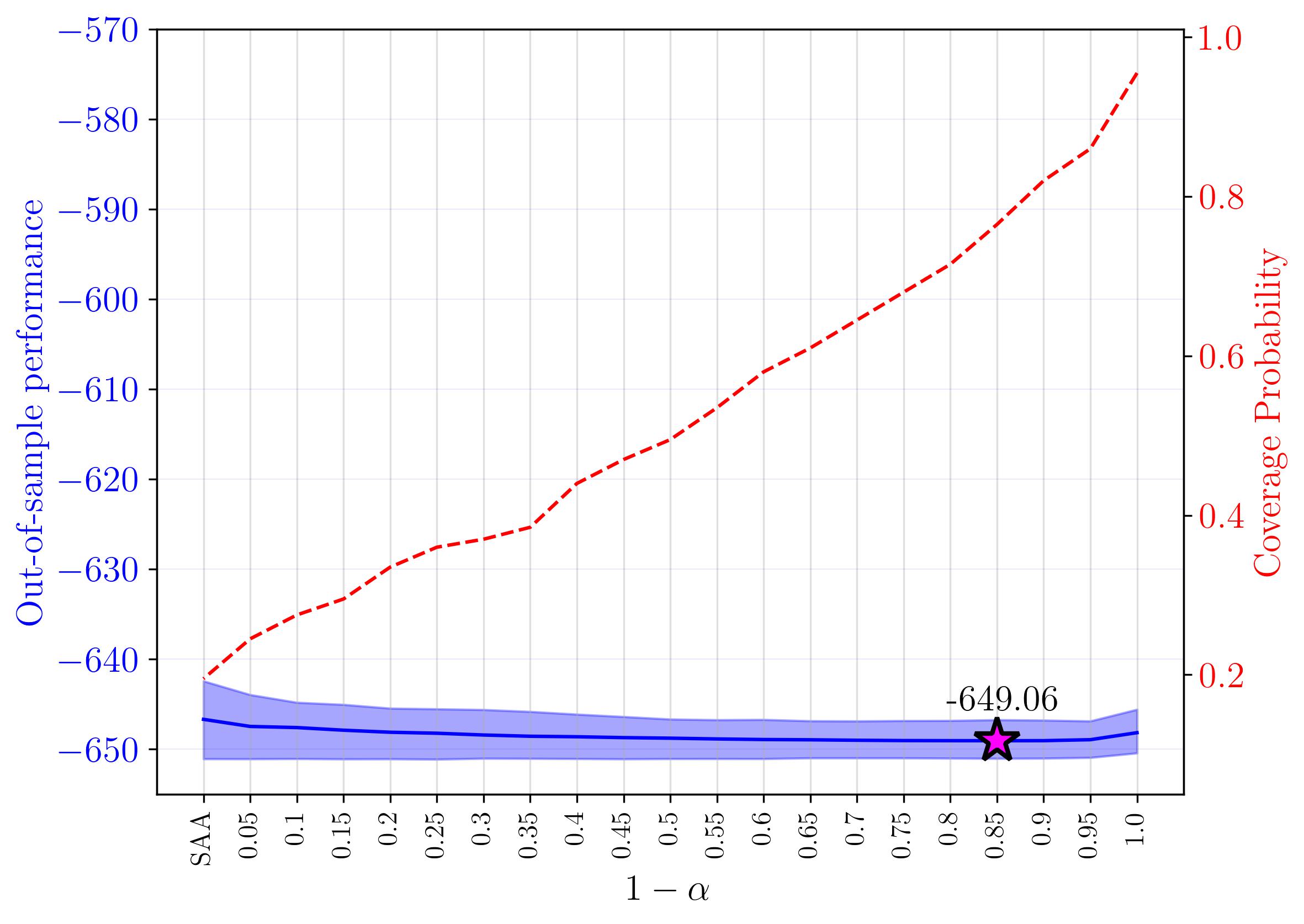}
}
\renewcommand{\thesubfigure}{f}
\subfloat[\footnotesize{Case II, $N = 120$}\label{fig:News-Large-N120}]{
\includegraphics[width=0.45\textwidth]{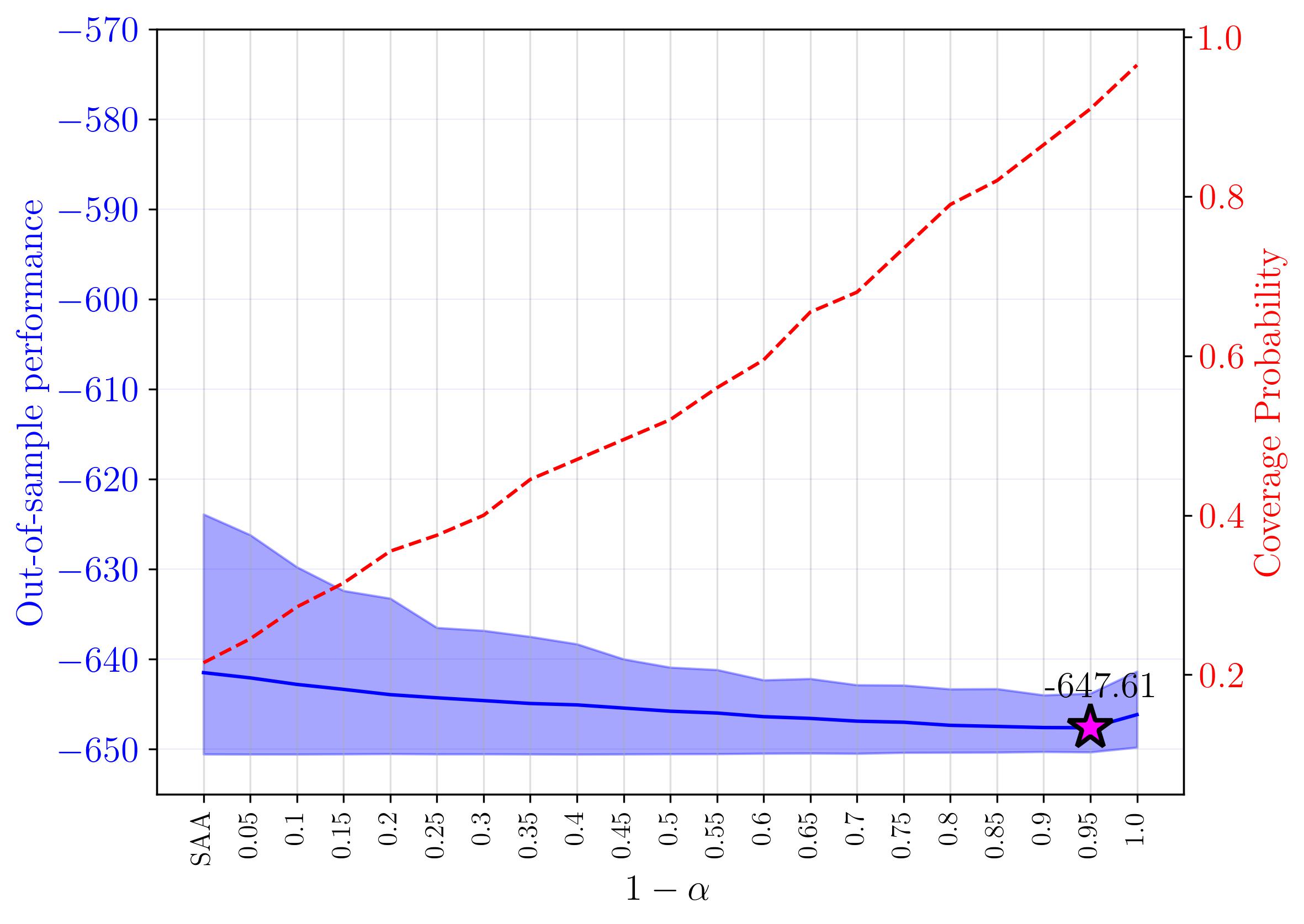}
}
\caption{Mean out-of-sample cost (solid line), 10th--90th percentile band (shaded area), and empirical coverage probability (dashed line) for \ref{mod:APUB-SP} under two ambiguity levels. Stars mark the minimum mean cost.}
\label{fig:Newsvendor}
\end{figure}

Our another experiment evaluates the \ref{mod:APUB-SP} framework using a 10-product newsvendor problem based on the formulation by \citet{hanasusanto_distributionally_2015} (detailed parameters are provided in Appendix~\ref{apx:newsvendor-params}). This benchmark is particularly relevant as \citet{mohajerin_esfahani_data-driven_2018} have noted that Wasserstein-based DRO may encounter performance limitations in settings like the newsvendor problem, where the random loss function possesses a Lipschitz modulus with respect to the random scenarios that is independent of the decision variables. To assess the robustness of \ref{mod:APUB-SP}, we consider two distinct demand scenarios:
\begin{itemize}[leftmargin=1.5em, topsep=1pt, noitemsep]
\item \textbf{Case I:} Demand follows a standard Gaussian-mixture distribution, representing a multimodal but well-behaved environment.
\item \textbf{Case II:} Demand is subjected to additional biased noise, creating higher variance and a more complex underlying true distribution to simulate a high-ambiguity environment.
\end{itemize}

\subsection{Out-of-Sample Behavior}
Figure~\ref{fig:Newsvendor} summarizes the out-of-sample behavior of \ref{mod:APUB-SP} across two experimental settings. The primary trend remains consistent across all panels: \ref{mod:APUB-SP} consistently outperforms SAA-M in both mean cost and performance variability. These gains are most pronounced when data are scarce ($N=30$) or when the underlying distribution exhibits high ambiguity. Specifically, Case~II represents a more challenging environment than Case~I. At $N=30$, the performance band in Case~II is significantly wider, and the nominal confidence level that minimizes the mean cost is notably higher. This suggests that heightened ambiguity necessitates a more conservative APUB objective to maintain reliability. As the sample size increases from $N=30$ to $N=120$, the performance gap between the two cases narrows, demonstrating that \ref{mod:APUB-SP} effectively adapts to decreasing epistemic uncertainty while continuing to leverage the information in larger datasets.

\begin{figure}[htbp]
\begin{center}
\subfloat[$N = 30$  \label{fig:Newvendor-Solutions-N30}]{
\includegraphics[width=0.45\textwidth]{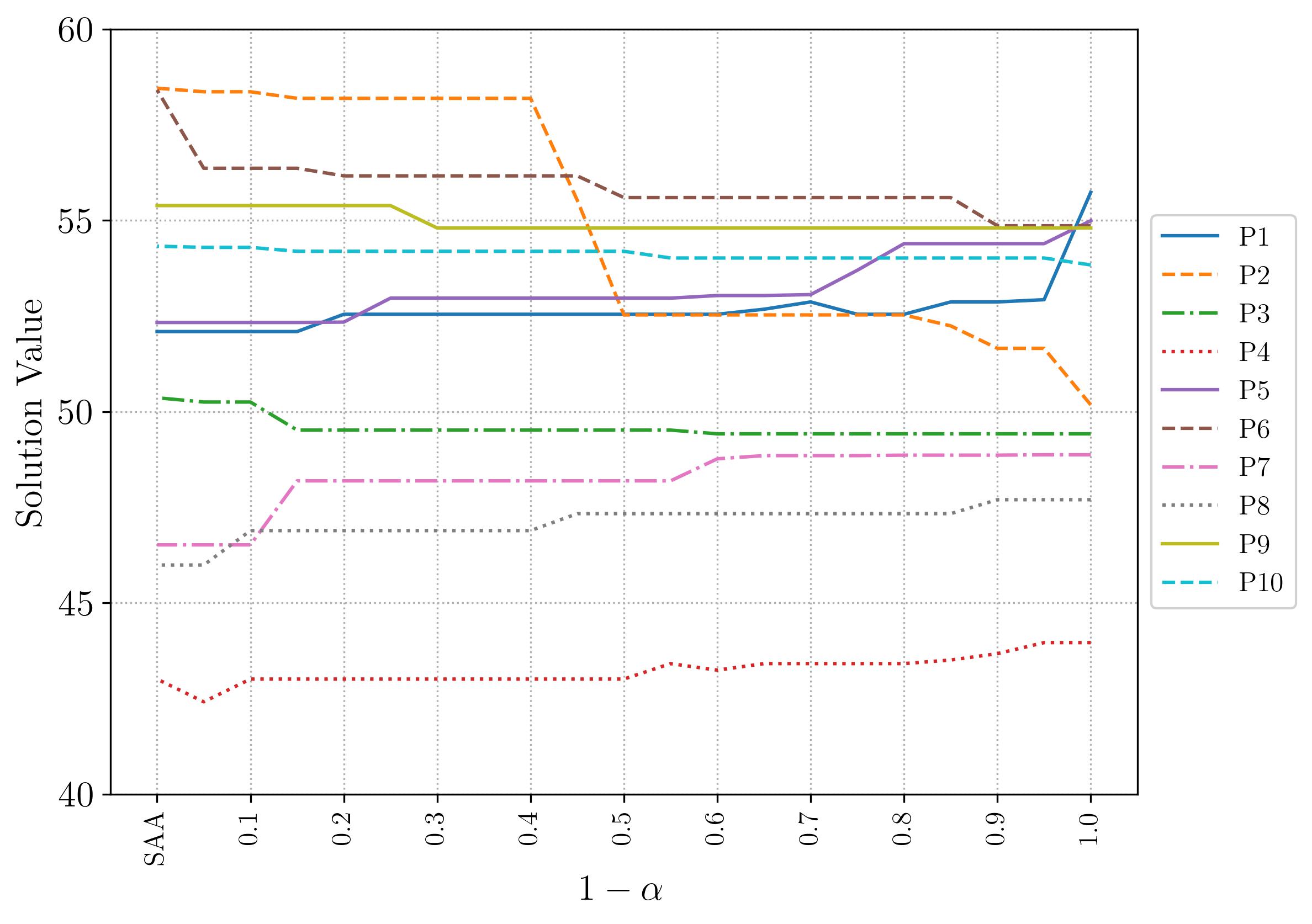}
}
\subfloat[$N = 60$ \label{fig:Newvendor-Solutions-N60}]{
\includegraphics[width=0.45\textwidth]{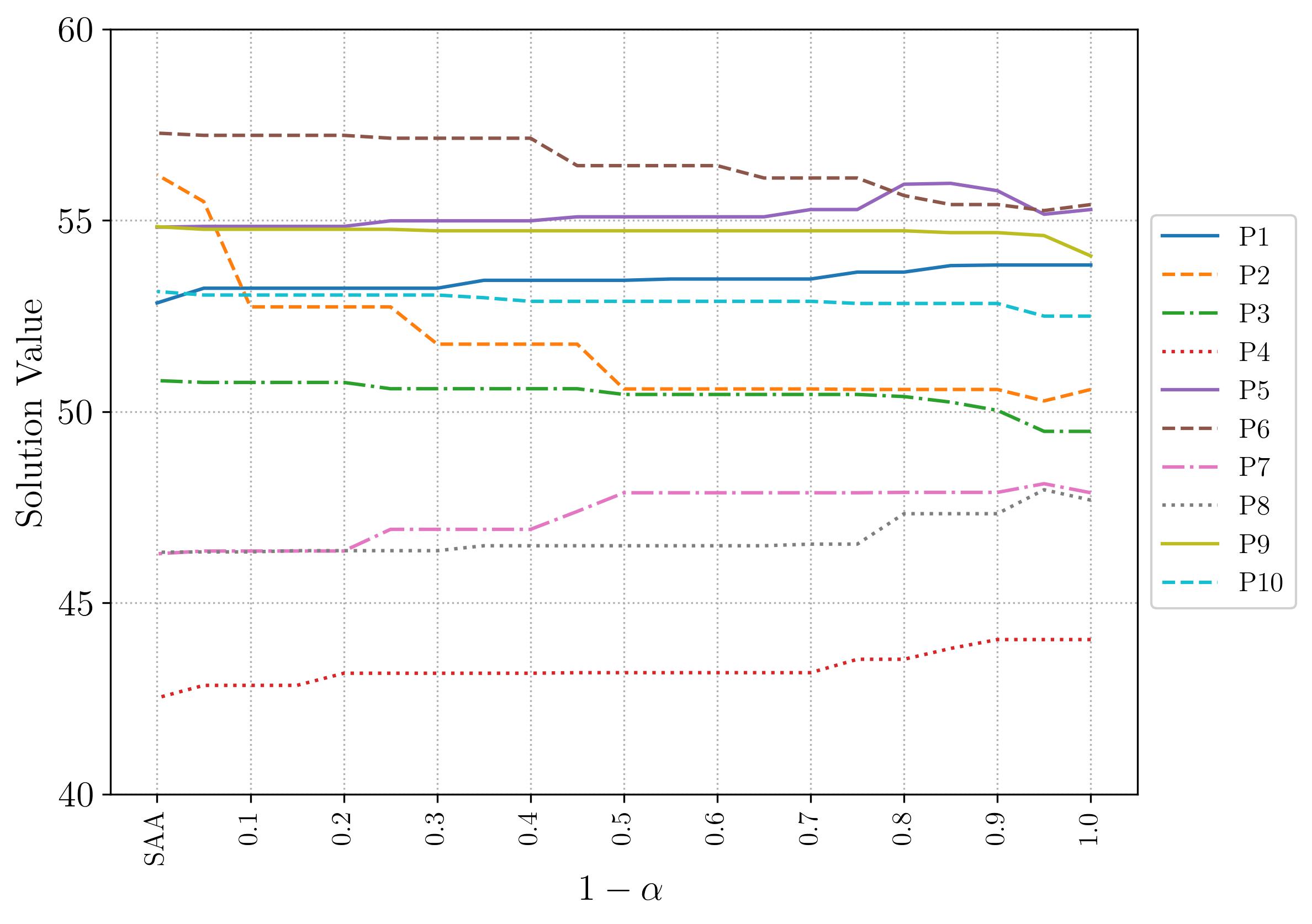}
}

\subfloat[$N = 120$ \label{fig:Newvendor-Solutions-N120}]{
\includegraphics[width=0.45\textwidth]{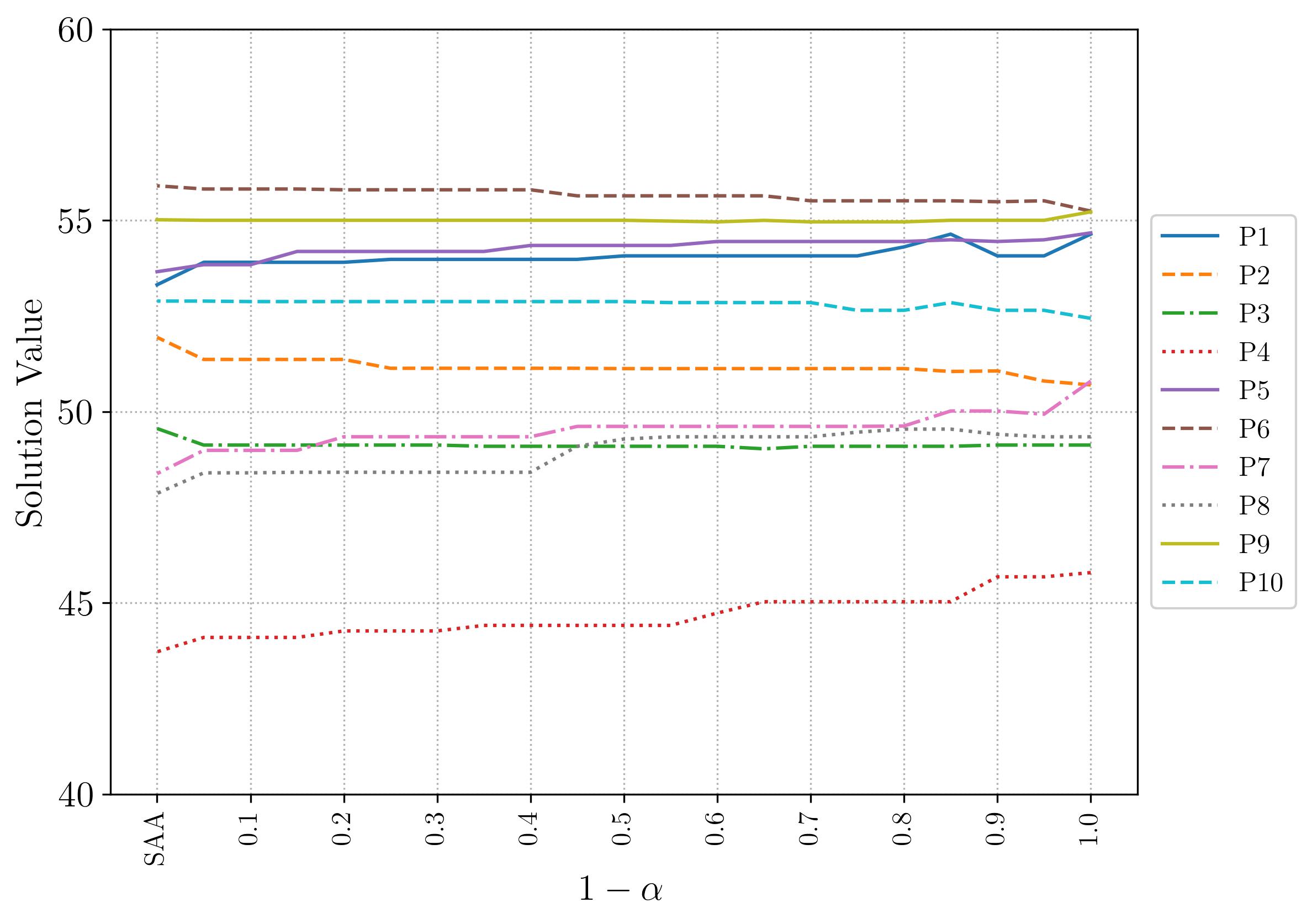}
}
\caption{Optimal order quantities for the 10 products.}
\label{fig:Newvendor-Solutions}
\end{center}
\end{figure}

\subsection{Stability of Optimal Solutions}
Figure~\ref{fig:Newvendor-Solutions} visualizes the resulting order quantities for Case~I. When data are scarce ($N=30$), the optimal solution is highly sensitive to the nominal level $(1-\alpha)$, particularly for specific products (e.g., P2). As $N$ increases, these sensitivity curves flatten, and the \ref{mod:APUB-SP} solutions smoothly converge toward the SAA-M estimates. This behavior mirrors the performance trends and reinforces the theoretical consistency of the framework.A critical advantage of \ref{mod:APUB-SP} is the prescriptive stability of its solutions across varying sample sizes. To quantify this, we measure the change in the recommended order vector from $N=30$ to $N=120$ using the $\ell_2$-norm. For SAA-M, the shift is 7.89. In contrast, \ref{mod:APUB-SP} yields significantly more stable decisions, with a shift of only 3.99 at $(1-\alpha)=0.5$ and 3.36 at $(1-\alpha)=0.95$. These results indicate that \ref{mod:APUB-SP} provides materially more reliable and consistent prescriptions in data-poor environments, reducing the nervousness of the optimization results as more data are collected.

\section{Proofs of Theorems and Propositions}
\label{apx:proofs}

\subsection{Proof of Proposition \ref{prop:APUB-correct}}

     Efron's percentile-based upper bound  is asymptotically accurate to the first-order, i.e.,
     \[
     \Pr\!\left(\mu \le U^\alpha_{\textnormal{Efron}}\!\left[\mu \mid \widehat{\mathbb P}_N\right]\right)
= (1-\alpha) + O(N^{-1/2}).
     \]
    Its proof refers to Section 4.2 in \cite{shao_jackknife_2012}. By Definition \ref{defn:APUB}, we know that $\bU_{\text{\tiny APUB}}^\alpha [ \mu |\widehat \dP_N ] \ge \bU_{\text{\tiny Efron}}^\alpha [ \mu |\widehat \dP_N ]$.
 \qed

\subsection{Proof of Theorem \ref{thm:APUB-Consistent} }

    To prove Theorem \ref{thm:APUB-Consistent}, we need the following lemma about the bootstrap law of large numbers. 
    \begin{lemma} \label{lemma:BootLLN}
     Let $(\zeta_1,\dots,\zeta_N) \sim \widehat \dP_N$. Then, as $N \to \infty$,
        \(
             \frac{1}{N} \sum_{n =1}^N F\left(\zeta_n\right)     \to \mu \quad \wpo.
        \)
    \end{lemma}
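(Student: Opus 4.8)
The plan is to reduce this bootstrap law of large numbers to the ordinary strong law for the data sequence, and then to control the extra randomness coming from the resampling. Set $\bar{Y}_N := \frac{1}{N}\sum_{n=1}^N F(\zeta_n)$ and condition on the data, i.e.\ on $\widehat{\dP}_N$. Given $\widehat{\dP}_N$, the variables $F(\zeta_1),\dots,F(\zeta_N)$ are i.i.d.\ with conditional mean $\widehat{\mu}_N$ and conditional variance $\widehat{\sigma}_N^2$. I would therefore write $\bar{Y}_N - \mu = S_N + (\widehat{\mu}_N - \mu)$, where $S_N := \bar{Y}_N - \widehat{\mu}_N$ is the pure resampling fluctuation, and treat the two summands separately.

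The term $\widehat{\mu}_N - \mu$ is immediate: since $\bE|F(\xi)| \le |\mu| + \sigma < \infty$, Kolmogorov's strong law applied to the i.i.d.\ sequence $F(\xi_1),F(\xi_2),\dots$ gives $\widehat{\mu}_N \to \mu$ \wpo. It remains to prove $S_N \to 0$ \wpo. Conditionally on the data, $\mathrm{Var}(S_N\mid\widehat{\dP}_N) = \widehat{\sigma}_N^2/N$, and since $\widehat{\sigma}_N^2 \to \sigma^2 < \infty$ \wpo\ this is $O(N^{-1})$; a conditional Chebyshev bound then yields only convergence in probability, so the crux is upgrading to almost-sure convergence. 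My plan here is a truncation argument: for a level $K>0$ split $F = F_K + R_K$ with $F_K := F\,\mathbbm{1}\{|F|\le K\}$ and $R_K := F\,\mathbbm{1}\{|F|> K\}$, inducing a corresponding split $S_N = A_N^K + T_N^K$. For the bounded part $A_N^K$, whose summands (after centering at their conditional mean) are bounded by $2K$, one gets $\bE[(A_N^K)^4\mid\widehat{\dP}_N] = O(K^4 N^{-2})$, a bound independent of the data realization; this is summable in $N$, so Markov's inequality and the Borel--Cantelli lemma give $A_N^K \to 0$ \wpo. The tail part $T_N^K$ is then controlled using the finite-variance assumption, via $\bE[R_K(\xi)^2]\to 0$ as $K\to\infty$ by dominated convergence, before letting $K\to\infty$.

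The step I expect to be the main obstacle is precisely this almost-sure upgrade. The $O(N^{-1})$ conditional variance is not summable, so a naive second-moment Borel--Cantelli argument fails, and a direct subsequence-plus-gap-filling argument is frustrated by the fact that the bootstrap samples for different $N$ are drawn from different (random) distributions with no natural coupling between them. The truncation device circumvents this for the bounded part by producing the faster, summable rate $N^{-2}$ in the fourth moment, but it must be paired with a careful almost-sure estimate of the tail fluctuation $T_N^K$, whose own conditional variance is again only $O(N^{-1})$; making this tail contribution uniformly negligible as $K\to\infty$ is the delicate point. Equivalently, one may simply invoke the known strong law of large numbers for the bootstrap mean under a finite second moment, of which the truncation argument sketched above is the self-contained version I would write out in full.
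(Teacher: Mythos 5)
Your closing remark---that one may ``simply invoke the known strong law of large numbers for the bootstrap mean under a finite second moment''---is in fact the paper's entire proof: it cites Theorem 2 of Athreya (1983), applied with $M=N$, $\phi=1$, $\theta=2$, the moment condition $\bE|F(\xi)-\mu|^{2}<\infty$ holding by the finite-variance assumption. So your fallback is correct and coincides with the paper's approach. The issue is the self-contained truncation argument you present as the primary route, because it does not close at exactly the point you flag as delicate.

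Concretely: with a \emph{fixed} truncation level $K$, the tail fluctuation $T_N^K=\frac{1}{N}\sum_{n=1}^N\bigl[R_K(\zeta_n)-\bE\bigl(R_K(\zeta)\mid\widehat{\dP}_N\bigr)\bigr]$ cannot be handled by any of the tools you allow yourself. Its conditional variance is again $O(N^{-1})$, which is not summable, and the fourth-moment/Borel--Cantelli device used for $A_N^K$ is unavailable precisely because $R_K$ is unbounded---that is what makes it the tail. Dominated convergence does give $\bE[R_K(\xi)^2]\to 0$ as $K\to\infty$, but this only shrinks the \emph{conditional mean and variance} of $T_N^K$; it does not yield an almost-sure bound on $\limsup_{N}|T_N^K|$ at fixed $K$, which is what the order of limits ``$N\to\infty$ first, then $K\to\infty$'' requires. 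In effect, the fixed-$K$ tail term reproduces the very a.s.-upgrade problem you set out to solve. The standard repair is to let the truncation level grow with $N$, say $K_N=N^{1/5}$: the bounded part then still has summable conditional fourth moments, $O(K_N^4N^{-2})=O(N^{-6/5})$, while the tail can be bounded pathwise by writing the bootstrap average through the multinomial resampling counts $V_n$, namely $\frac{1}{N}\sum_{n}V_n\,|R_{K_N}(\xi_n)|\le\bigl(\max_n V_n\bigr)\cdot\frac{1}{K_N}\cdot\frac{1}{N}\sum_{n}F(\xi_n)^2$, and using that $\max_n V_n\le \log N$ eventually $\wpo$ (Borel--Cantelli on multinomial tail bounds) together with $\frac{1}{N}\sum_n F(\xi_n)^2\to\bE[F(\xi)^2]$ $\wpo$, so that the right-hand side is $O\bigl((\log N)N^{-1/5}\bigr)\to 0$. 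Without some such device---or without simply making the citation your proof, as the paper does---the argument as sketched has a genuine gap.
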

    \begin{proof}
    According to Theorem 2 in \cite{athreya_strong_1983} (Theorem \ref{lemma:Boot-LLN}), if \( \liminf_{M,N \to \infty } MN^{-\phi} > 0 \) for some \( \phi > 0 \), and \( \bE|F(\xi) - \mu|^{\theta} < \infty \) for some \( \theta \geq 1 \) such that \( \theta\phi > 1 \), we have that, as \( M,N \to \infty \),
    \(
        \frac{1}{M}\sum_{m=1}^M F\left(\zeta_m \right) \to  { \mu} \quad \wpo,
    \)
    where $(\zeta_1,\dots,\zeta_M) \sim \widehat \dP_N$.
    Choose \( \phi = 1 \), \( \theta = 2 \), and \( M = N \). It ensures 
    \( \liminf_{M,N \to \infty }  MN^{-\phi} = 1 > 0. \) The condition \( \bE|Z(\xi) - \mu|^{\theta} < \infty \) is satisfied due to finite variance. This completes the proof.
    \end{proof}

\textbf{Proof of Theorem \ref{thm:APUB-Consistent}.}
Let $(\bar\xi_1,\bar\xi_2,\dots)$ be a realization of the sample path and let $\bar \dP_N$
be the empirical distribution associated with the first $N$ sample points.
Let $(\zeta_1(\bar \dP_N), \dots, \zeta_N(\bar \dP_N)) \sim \bar \dP_N$ be a bootstrap sample.
Define the event
\begin{equation}\label{eq:defineS-rewrite}
\fS := \left\{ (\bar{\xi}_1,\bar{\xi}_2,\dots) \ : \ 
\lim_{N\to\infty} \frac{1}{N}\sum_{n=1}^N F(\bar{\xi}_n) = \mu,\ \ 
\lim_{N\to\infty} \frac{1}{N}\sum_{n=1}^N F\!\left(\zeta_n(\bar \dP_N)\right) = \mu \ \wpo\ (\text{for $\zeta$})
\right\}.
\end{equation}
By the strong law of large numbers and Lemma \ref{lemma:BootLLN}, we have
$\Pr\{(\xi_1,\xi_2,\dots)\in \fS\}=1$.

Fix an arbitrary $(\bar\xi_1,\bar\xi_2,\dots)\in\fS$ and its associated empirical measures
$(\bar \dP_1,\bar \dP_2,\dots)$. Then $(\bar\xi_1,\bar\xi_2,\dots)$ and $\bar \dP_N$ are deterministic,
while $(\zeta_1(\bar \dP_N), \dots, \zeta_N(\bar \dP_N))$ remains random.
For clarity, define the bootstrap sample mean
\[
\widehat\mu_N(\bar \dP_N) := \frac{1}{N}\sum_{n=1}^N F\!\left(\zeta_n(\bar \dP_N)\right).
\]
By Proposition \ref{prop:APUB-DEF-OPT}, we have
\[
\bU_{\text{\tiny APUB}}^\alpha [ \mu \mid \bar \dP_N ]
= \mathrm{CVaR}_\alpha\!\left(\widehat\mu_N(\bar \dP_N) \,|\, \bar \dP_N\right),
\]
where is the CVaR of $\widehat\mu_N(\bar \dP_N)$ conditional on $\bar \dP_N$. Hence it suffices to show
\begin{equation}\label{eq:target-rewrite}
\lim_{N\to\infty}\mathrm{CVaR}_\alpha\!\left(\widehat\mu_N(\bar \dP_N) \,|\, \bar \dP_N \right) = \mu.
\end{equation}

\paragraph{Step 1 $L^1$-convergence of $\widehat\mu_N(\bar \dP_N)$.}
Conditional on $\bar \dP_N$, the bootstrap draws $\zeta_1(\bar \dP_N),\ldots,\zeta_N(\bar \dP_N)$
are i.i.d.\ with common distribution $\bar \dP_N$. Hence, by linearity of conditional expectation,
\[
\bE\!\left[\widehat\mu_N(\bar \dP_N)\mid \bar \dP_N\right]
=\bE\!\left[\frac1N\sum_{n=1}^N F\!\left(\zeta_n(\bar \dP_N)\right)\,\middle|\, \bar \dP_N\right]
=\frac1N\sum_{n=1}^N \bE\!\left[F\!\left(\zeta_n(\bar \dP_N)\right)\,\middle|\, \bar \dP_N\right].
\]
Because $\zeta_n(\bar \dP_N)\sim \bar \dP_N$, we have
\[
\bE\!\left[F\!\left(\zeta_n(\bar \dP_N)\right)\,\middle|\, \bar \dP_N\right]
=\int F(\xi)\,\bar \dP_N(d\xi)
=\frac1N\sum_{i=1}^N F(\bar\xi_i),
\]
and therefore
\[
\bE\!\left[\widehat\mu_N(\bar \dP_N)\mid \bar \dP_N\right]
=\frac1N\sum_{i=1}^N F(\bar\xi_i).
\]

Next we derive the conditional variance, where variance operator is denoted by  $\bV(\cdot)$. Conditional on $\bar\dP_N$, the bootstrap draws
$\zeta_1(\bar\dP_N),\ldots,\zeta_N(\bar\dP_N)$ are i.i.d.\ from $\bar\dP_N$, hence
\[
\bV\!\left(\widehat\mu_N(\bar\dP_N)\mid \bar\dP_N\right)
=\bV\!\left(\frac1N\sum_{n=1}^N F\!\left(\zeta_n(\bar\dP_N)\right)\,\middle|\,\bar\dP_N\right)
=\frac{1}{N}\,\bV\!\left(F\!\left(\zeta_1(\bar\dP_N)\right)\,\middle|\,\bar\dP_N\right).
\]
Moreover, since $\zeta_1(\bar\dP_N)\sim \bar\dP_N$,
\begin{align*}
\bV\!\left(F\!\left(\zeta_1(\bar\dP_N)\right)\,\middle|\,\bar\dP_N\right)
&=\int F(\xi)^2\,\bar\dP_N(d\xi)-\left(\int F(\xi)\,\bar\dP_N(d\xi)\right)^2\\
&=\frac1N\sum_{i=1}^N F(\bar\xi_i)^2-\left(\frac1N\sum_{i=1}^N F(\bar\xi_i)\right)^2
=: \hat\sigma_N^2(\bar\dP_N).
\end{align*}
Therefore,
\[
\bV\!\left(\widehat\mu_N(\bar\dP_N)\mid \bar\dP_N\right)=\frac{\hat\sigma_N^2(\bar\dP_N)}{N}.
\]

Now use the conditional mean-square decomposition:
\begin{align*}
\bE\!\left[(\widehat\mu_N(\bar \dP_N)-\mu)^2 \mid \bar \dP_N\right]
&=\bV\!\left(\widehat\mu_N(\bar \dP_N)\mid \bar \dP_N\right)
+\Big(\bE[\widehat\mu_N(\bar \dP_N)\mid \bar \dP_N]-\mu\Big)^2\\
&=\frac{\hat\sigma_N^2(\bar \dP_N)}{N}
+\left(\frac1N\sum_{i=1}^N F(\bar\xi_i)-\mu\right)^2.
\end{align*}
Since $(\bar\xi_1,\bar\xi_2,\ldots)\in\fS$, we have $\frac1N\sum_{i=1}^N F(\bar\xi_i)\to\mu$.
Under the standing assumption $\bE[F(\xi_1)^2]<\infty$, the strong law applied to $F(\xi_1)^2$
implies $\hat\sigma_N^2(\bar \dP_N)\to \sigma^2<\infty$, and hence $\hat\sigma_N^2(\bar \dP_N)/N\to 0$.
Consequently,
\[
\bE\!\left[(\widehat\mu_N(\bar \dP_N)-\mu)^2 \mid \bar \dP_N\right]\to 0.
\]
Finally, by Cauchy--Schwarz,
\[
\bE\!\left[\left|\widehat\mu_N(\bar \dP_N)-\mu\right| \,\Big|\, \bar \dP_N\right]
\le \sqrt{\bE\!\left[(\widehat\mu_N(\bar \dP_N)-\mu)^2  \,\Big|\, \bar \dP_N\right]}
\to 0,
\]
which implies $\bE\!\left[\left|\widehat\mu_N(\bar \dP_N)-\mu\right|\right]\to 0$
(with respect to the bootstrap randomness, conditional on $\bar \dP_N$).

\paragraph{Step 2: A Lipschitz bound for $\mathrm{CVaR}_\alpha$.}
Recall the Rockafellar--Uryasev representation: for any integrable random variable $Z$,
\[
\mathrm{CVaR}_\alpha(Z)
= \inf_{t\in\mathbb R}\left\{ t + \frac{1}{\alpha}\bE[(Z-t)_+] \right\}.
\]
Using $(a)_+-(b)_+\le (a-b)_+$, for any integrable $X,Y$ we have
\begin{align*}
\mathrm{CVaR}_\alpha(X)-\mathrm{CVaR}_\alpha(Y)
&\le \left( t + \frac{1}{\alpha}\bE[(X-t)_+] \right)
     - \left( t + \frac{1}{\alpha}\bE[(Y-t)_+] \right)\\
&= \frac{1}{\alpha}\bE\!\left[(X-t)_+-(Y-t)_+\right]
\le \frac{1}{\alpha}\bE[(X-Y)_+]
\le \frac{1}{\alpha}\bE|X-Y|,
\end{align*}
where $t\in\mathbb R$ is arbitrary. Swapping $X$ and $Y$ yields
\[
\left|\mathrm{CVaR}_\alpha(X)-\mathrm{CVaR}_\alpha(Y)\right|
\le \frac{1}{\alpha}\bE|X-Y|.
\]

\paragraph{Step 3: Conclude \eqref{eq:target-rewrite}.}
Apply the Lipschitz bound with $X=\widehat\mu_N(\bar \dP_N)$ and $Y=\mu$ (a constant).
Since $\mathrm{CVaR}_\alpha(\mu)=\mu$, we obtain
\[
\left|\mathrm{CVaR}_\alpha\!\left(\widehat\mu_N(\bar \dP_N) \,|\,  \bar \dP_N\right)-\mu\right|
\le \frac{1}{\alpha}\bE\!\left[\left|\widehat\mu_N(\bar \dP_N)-\mu\right| \,\Big|\,  \bar \dP_N \right]\to 0
\]
by Step 1. This proves \eqref{eq:target-rewrite}.
Since the chosen sample path $(\bar\xi_1,\bar\xi_2,\ldots)\in\fS$ was arbitrary and
$\Pr\{(\xi_1,\xi_2,\ldots)\in\fS\}=1$, the claim follows.
\qed

\subsection{Proof of Theorem \ref{thm:APUB-SP-Correct}}\label{section:Proof}

For the convenience of reading, we restate some notations here. Recall   $ (\xi_1, \dots, \xi_N)$ is an i.i.d. random sample from $(\Omega, \mathcal{F}, \dP)$. Consider a function $F: \mathcal{X} \times \Xi \to \mathbb{R}$, where $\Xi$ is the support of $\xi_1$ and $\mathcal{X}$ is a decision region. We define the population mean $\mu(x) = \bE[ F(x, \xi) ]$ and variance $\sigma^2(x) = \bE[ ( F(x, \xi) - \mu(x) )^2 ]$. Let $\widehat{\dP}_N$ be the empirical distribution associated with the random sample, and $(\zeta_1, \dots, \zeta_N)$ is a bootstrap sample generated from $\widehat{\dP}_N$. Accordingly, denote $\widehat{\mu}_N(x) = \bE [F(x, \zeta)|{\widehat{\dP}_N}]$, $\widehat{\sigma}^2_N(x) = \bE\left[ ( F(x, \zeta) - \widehat{\mu}_N(x) )^2 \,\left|\, {\widehat{\dP}_N} \right.\right]$, and the standardized sample mean 
\[S_N(x) := \dfrac{ \sqrt{N} \left( \widehat{\mu}_N(x) - \mu(x) \right) }{ \sigma(x) }.\] 
Denote the bootstrap counterparts of  $\widehat{\mu}_N(x)$ and ${S}_N(x)$ by
\[
\widehat{\mu}_N^*(x) := \frac{1}{N} \sum_{n=1}^N F(x, \zeta_n) \quad \text{and} \quad S_N^*(x) := \frac{ \sqrt{N} \left( \widehat{\mu}_N^*(x) - \widehat{\mu}_N(x) \right) }{ \widehat{\sigma}_N(x) }.
\]

We now outline our proof as follows. We know by Proposition \ref{lemma:mean-func-continuity} that, the objective function of \ref{mod:APUB-SP}, $\bU_{\text{\tiny APUB}}^\alpha [ \mu(x) |\widehat \dP_N ]$, is continuous. Also, Assumption \ref{asp:compact} states that the set $\widehat\cS_N^\alpha$ of the optimal solutions of \ref{mod:APUB-SP} is contained in the compact set $\cK$ for a sufficiently large $N$ w.p.1. This implies that $\widehat\cS_N^\alpha$ is compact and then there exists $\tilde{x}_N\in \widehat\cS_N^\alpha$ such that
    \(\mu(\tilde{x}_N) = \max_{x\in \widehat\cS_N^\alpha} \mu(x).\)
Subsequently, we 
\begin{align*}
    \beta(\widehat {\vartheta}_N^\alpha, \widehat \cS_N^\alpha) =  & \ \Pr \left( \  \bU_{\text{\tiny APUB}}^\alpha [ \mu({\tilde x_N}) |\widehat \dP_N ]   \ge  \mu(\tilde x_N)   \right) 
    \ge \ \Pr \left(   \bU_{\text{\tiny Efron}}^\alpha [ \mu({\tilde x_N}) |\widehat \dP_N ]   \ge  \mu(\tilde x_N)  \right), 
\end{align*}
which means that, to obtain 
\(\lim_{N \to \infty} \beta(\widehat {\vartheta}_N^\alpha, \widehat \cS_N^\alpha) \ge \ (1-\alpha),\) it suffices to prove the following uniform convergence as  
    \begin{equation} \label{eq:uniform-Efron-Accuracy}
       \lim_{N \to \infty} \sup_{x\in \cK}\left|\Pr\left( \bU_{\text{\tiny Efron}}^\alpha [ \mu(x) |\widehat \dP_N ] \ge \mu(x)     \right) - (1-\alpha) \right| = 0.
    \end{equation}
In this section we describe the proof of equation~\eqref{eq:uniform-Efron-Accuracy} in five steps as follows:     
\begin{enumerate}[label= \roman*)]
        \item Section \ref{section:finite moments} shows the first three moments of $F(x,\xi)$  are finite.
        
        \item Section \ref{section: UniformEdgeworth} proves that as $N$ increases to $\infty$, the cdf of $S_N(x)$ uniformly (in $x\in \cK$) approximates to the standard normal distribution function.
        
        \item Section \ref{section: UniformEdgeworth-Bootstrap} proves a bootstrap version of the result of step (iii).
        
        \item Section \ref{section: BootPercentile} links $\bU_{\text{\tiny Efron}}^\alpha [ \mu(x) |\widehat \dP_N ]$ to $S_N(x)$, and utilizes the results from the step (ii) and (iii) to prove the bootstrap percentile uniformly converges to normal percentile.
        \item Section \ref{section: CoverageProbability} rigorously proves \eqref{eq:uniform-Efron-Accuracy}.
    \end{enumerate}

\subsubsection{Finite Moments and Domination Property.} \label{section:finite moments}
The following lemma shows the first three moments of $F(x,\xi)$ are finite.
\begin{lemma}\label{lemma: uniform 4th moment bound}
    Under the assumptions of Theorem \ref{thm:APUB-SP-Correct}, for $k\le 3$, there exist $K_k$ such that
    \(
            \sup_{ x \in \cK } \bE \left[ \left| F(x, \xi) \right|^{ k} \right] \leq K_{k}.  
    \)
\end{lemma}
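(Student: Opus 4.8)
The plan is to dominate $|F(x,\xi)|$ pointwise by its value at the fixed anchor point plus a Lipschitz correction, and then convert this pointwise domination into a uniform bound on the $L^k$ norms via Minkowski's inequality. The hypotheses of Theorem~\ref{thm:APUB-SP-Correct} are tailored precisely for this: hypothesis (i) supplies an anchor $x_0$ with a finite third moment, and hypothesis (ii) supplies a globally integrable Lipschitz modulus $L(\xi)$.

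First I would fix the anchor $x_0 \in \cK$ from hypothesis (i) and set $D := \sup_{x\in\cK}\|x-x_0\|$, which is finite because $\cK$ is compact. By the Lipschitz hypothesis (ii), for every $x \in \cK$ and every $\xi \in \Xi$,
\begin{equation*}
    |F(x,\xi)| \le |F(x_0,\xi)| + |F(x,\xi)-F(x_0,\xi)| \le |F(x_0,\xi)| + D\,L(\xi).
\end{equation*}
Next I would verify that both terms on the right have finite $k$-th moments for $k\le 3$: hypothesis (i) gives $\bE[|F(x_0,\xi)|^3]<\infty$ and hypothesis (ii) gives $\bE[|L(\xi)|^3]<\infty$, and since $\dP$ is a probability measure, Lyapunov's inequality (monotonicity of $L^p$ norms) upgrades these to $\bE[|F(x_0,\xi)|^k]<\infty$ and $\bE[|L(\xi)|^k]<\infty$ for all $k\le 3$.

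Then, applying Minkowski's inequality to the pointwise bound, for each $x\in\cK$ and each $k\le 3$,
\begin{equation*}
    \left(\bE\!\left[|F(x,\xi)|^k\right]\right)^{1/k} \le \left(\bE\!\left[|F(x_0,\xi)|^k\right]\right)^{1/k} + D\left(\bE\!\left[|L(\xi)|^k\right]\right)^{1/k}.
\end{equation*}
The right-hand side is a finite constant that does not depend on $x$, so raising to the $k$-th power yields the claimed uniform bound $K_k$.

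I do not expect a genuine obstacle in this lemma; it is a preparatory moment estimate. The only two points requiring a word of care are the finiteness of $D$, which is immediate from compactness of $\cK$, and the passage from the third moment to the lower moments, which is exactly Lyapunov's inequality on a probability space. Note that hypothesis (iii), concerning the boundedness of $\sigma(x)$ and $\sigma^{-1}(x)$, plays no role here and will instead be invoked in the later Edgeworth-expansion steps of the proof of Theorem~\ref{thm:APUB-SP-Correct}.
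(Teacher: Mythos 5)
Your proposal is correct and follows essentially the same route as the paper's proof: anchor at $x_0$, bound $|F(x,\xi)| \le |F(x_0,\xi)| + D\,L(\xi)$ with $D = \sup_{x\in\cK}\|x-x_0\|$ finite by compactness, and then pass to moments. The only cosmetic difference is that you close with Minkowski's inequality (plus an explicit appeal to Lyapunov's inequality for the lower moments, a step the paper leaves implicit), whereas the paper uses the elementary bound $(a+b)^k \le 2^{k-1}(a^k+b^k)$; both yield a uniform constant $K_k$.
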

\begin{proof}
    We first prove the boundedness for \(\bE \left[ \left| F(x, \xi) \right|^{ k} \right]\) over \( x \in \cK \). The Lipschitz continuity of \( F(x, \xi) \) leads to 
    \(
        |F(x, \xi)|\leq |F(x_0, \xi)| + |F(x, \xi) - F(x_0, \xi)|\leq |F(x_0, \xi)| + L(\xi) \|x - x_0\| 
        \le |F(x_0, \xi)| + D \cdot L(\xi),
    \)
    where \( D := \sup_{x \in \mathcal{K}} \|x - x_0\| \) is the diameter of \(\cK\).
    Raising both sides to the power \( k \) and applying the inequality \( (a + b)^k \leq 2^{k-1}(a^k + b^k) \) for \( a, b \geq 0 \), we obtain
    \(
        |F(x, \xi)|^k \leq 2^{k-1} \left( |F(x_0, \xi)|^k + D^k L(\xi)^k \right).
    \)
    Taking expectations on both sides,
    \(
        \bE\left[ |F(x, \xi)|^k \right] \leq 2^{k-1} \left( \bE\left[ |F(x_0, \xi)|^k \right] + D^k \bE\left[ L(\xi)^k \right] \right).
    \)
    The assumptions of Theorem \ref{thm:APUB-SP-Correct} requires that \( \bE\left[ |F(x_0, \xi)|^k \right] < \infty \) and \( \bE\left[ L(\xi)^k \right] < \infty \) for \( k \leq 3 \). By letting 
    \(
        K_k := 2^{k-1} \left( \bE\left[ |F(x_0, \xi)|^k \right] + D^k \bE\left[ L(\xi)^k \right] \right),
    \)
    we complete the proof.
\end{proof}
\begin{lemma}\label{lemma: domination}
    Under the assumptions of Theorem \ref{thm:APUB-SP-Correct}, 
    there exists an integrable function $G(x,\xi)$ with $\mathbb{E}[G(x,\xi)] < \infty$ for each $x \in \mathcal{K}$ such that $|F(x,\xi)|\le G(x,\xi)$.
\end{lemma}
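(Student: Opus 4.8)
The plan is to extract the dominating function directly from the Lipschitz envelope already derived in the proof of Lemma~\ref{lemma: uniform 4th moment bound}. There it was shown that, for every $x\in\cK$,
\[
|F(x,\xi)| \le |F(x_0,\xi)| + D\, L(\xi),
\]
where $x_0\in\cK$ is the point furnished by assumption (i) of Theorem~\ref{thm:APUB-SP-Correct} and $D := \sup_{x\in\cK}\|x-x_0\|$ is the diameter of the compact set $\cK$, which is finite by compactness. Crucially, the right-hand side does not depend on $x$, so I would simply define the envelope
\[
G(x,\xi) := |F(x_0,\xi)| + D\, L(\xi),
\]
which by construction satisfies $|F(x,\xi)|\le G(x,\xi)$ for all $x\in\cK$.

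It then remains to verify $\bE[G(x,\xi)]<\infty$. Since $\dP$ is a probability measure, Lyapunov's inequality gives $\bE|Y|\le \left(\bE|Y|^3\right)^{1/3}$ for any integrable-in-cube random variable $Y$; applying this to $Y=F(x_0,\xi)$ and to $Y=L(\xi)$ converts the third-moment hypotheses $\bE[|F(x_0,\xi)|^3]<\infty$ and $\bE[|L(\xi)|^3]<\infty$ of Theorem~\ref{thm:APUB-SP-Correct} into finiteness of the corresponding first moments. Consequently,
\[
\bE[G(x,\xi)] = \bE\big[|F(x_0,\xi)|\big] + D\,\bE[L(\xi)] < \infty,
\]
which completes the argument.

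I expect essentially no obstacle here: the lemma is an immediate corollary of the envelope inequality already established for Lemma~\ref{lemma: uniform 4th moment bound}, and the only substantive point is the passage from third to first moments, which is routine once one uses that $\dP$ has total mass one. The purpose of the lemma is downstream—it provides the $x$-free integrable envelope needed to justify interchanging limits with expectations (via dominated convergence or a uniform law of large numbers) in the subsequent uniform Edgeworth and coverage-probability arguments of Sections~\ref{section: UniformEdgeworth}--\ref{section: CoverageProbability}.
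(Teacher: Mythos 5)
Your proof is correct and follows essentially the same route as the paper: both dominate $|F(x,\xi)|$ via the Lipschitz bound around the anchor point $x_0$ and deduce integrability from the third-moment hypotheses of Theorem~\ref{thm:APUB-SP-Correct} (the paper leaves this moment reduction implicit, while your appeal to Lyapunov's inequality makes it explicit). The only difference is that you bound $\|x-x_0\|$ by the diameter $D$ of the compact set $\cK$, yielding the $x$-free envelope $|F(x_0,\xi)| + D\,L(\xi)$ — a mild strengthening, and indeed the form most convenient for the downstream uniform law of large numbers — whereas the paper's $G(x,\xi) = |F(x_0,\xi)| + L(\xi)\|x-x_0\|$ retains the $x$-dependence.
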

\begin{proof}
    By Assumption (ii) of Theorem \ref{thm:APUB-SP-Correct}, 
    \[
        |F(x,\xi)|\le |F(x,\xi) - F(x_0,\xi)| + |F(x_0,\xi)|< |F(x_0,\xi)| + L(\xi)\|x-x_0\|.
    \]
    Define  
    \(
        G(x,\xi): = |F(x_0,\xi)|+L(\xi)\|x-x_0\|.
    \)
    Then, we have
    \(
        \bE[G(x,\xi)] = \bE|F(x_0,\xi)| + \bE[L(\xi)]\|x-x_0\|.
    \)
    Assumption (i) and (iii) of Theorem \ref{thm:APUB-SP-Correct} implies $\bE|F(x_0,\xi)| <\infty$ and $\bE[L(\xi)]<\infty$. Thus, $\bE[G(x,\xi)]<\infty$ for all $x\in \cK$.
\end{proof}

\subsubsection{Uniform Berry-Esseen Inequality.} \label{section: UniformEdgeworth}
The following theorem states the Berry-Esseen bound for \( S_N(x) \), which is fundamental in our proof.
\begin{theorem}[Berry-Esseen Inequality \cite{berry1941accuracy}]\label{lemma:BE}
For a fixed \( x \in \cK \), if \(\bE|F(x,\xi)|^3 < \infty\), then
\[
\sup_{z\in\bR} \big|\Pr( S_N(x) \leq z  ) - \Phi(z) \big| \le C \frac{\bE|F(x,\xi)|^3}{\sigma^3(x)\sqrt{N}}
\]
where $C$ is a constant independent of $x$.
\end{theorem}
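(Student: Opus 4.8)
The plan is to read the statement as the classical Berry--Esseen theorem applied pointwise in $x$, and then to argue that the resulting constant is genuinely independent of $x$. First I would fix $x\in\cK$ and observe that $F(x,\xi_1),\dots,F(x,\xi_N)$ are i.i.d.\ with mean $\mu(x)$, variance $\sigma^2(x)$, and finite third absolute moment; here assumption~(iii) of Theorem~\ref{thm:APUB-SP-Correct} guarantees $\sigma(x)>0$, so $S_N(x)$ is well defined and the right-hand ratio is finite, while finiteness (indeed uniform boundedness over $\cK$) of the third moment is already secured by Lemma~\ref{lemma: uniform 4th moment bound}. The quantity $S_N(x)$ is precisely the standardized partial sum to which the classical theorem applies, and that theorem supplies the bound with a \emph{universal} absolute constant --- one depending on neither the distribution nor $x$. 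Since every bit of $x$-dependence is displayed explicitly in the ratio $\bE|F(x,\xi)|^3/(\sigma^3(x)\sqrt N)$, the same $C$ serves for all $x\in\cK$ simultaneously, which is exactly the uniformity that the later steps of the proof require.

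One bookkeeping point needs care: the textbook form of Berry--Esseen carries the \emph{centered} third moment $\bE|F(x,\xi)-\mu(x)|^3$, whereas the statement uses the raw moment $\bE|F(x,\xi)|^3$. These differ by at most a fixed factor, since $|\mu(x)|^3=|\bE F(x,\xi)|^3\le\bE|F(x,\xi)|^3$ by Jensen's inequality and $(a+b)^3\le 4(a^3+b^3)$ gives $\bE|F(x,\xi)-\mu(x)|^3\le 8\,\bE|F(x,\xi)|^3$. Replacing the centered moment by $8$ times the raw one and enlarging $C$ accordingly delivers the stated inequality.

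Should a self-contained derivation be preferred to a citation, I would use the Fourier-analytic route. Writing $Y_i=(F(x,\xi_i)-\mu(x))/\sigma(x)$ and $\beta:=\bE|Y_1|^3$, I would first apply Esseen's smoothing lemma to dominate $\sup_z|\Pr(S_N(x)\le z)-\Phi(z)|$ by $\tfrac1\pi\int_{-T}^{T}|t|^{-1}\bigl|\widehat\psi(t/\sqrt N)^N-e^{-t^2/2}\bigr|\,dt+24m/(\pi T)$, where $\widehat\psi$ is the characteristic function of $Y_1$ and $m=(2\pi)^{-1/2}$. Next I would Taylor-expand $\widehat\psi(s)=1-s^2/2+\theta(s)$ with $|\theta(s)|\le\beta|s|^3/6$, raise it to the $N$th power, compare with $e^{-t^2/2}$ over the range $|t|\le c\sqrt N/\beta$, and finally choose $T\asymp\sqrt N/\beta$ to balance the two terms and obtain $C\beta/\sqrt N$ with an absolute $C$.

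The hard part in the self-contained route is the characteristic-function estimate over the \emph{moderate} range of $t$: the local Taylor bound alone does not control $\widehat\psi(t/\sqrt N)^N$ near the boundary $|t|\sim\sqrt N/\beta$, so one must instead invoke a global estimate of the form $|\widehat\psi(s)|\le\exp(-s^2/2+c|s|^3)$ together with careful truncation of the integral. In the citation-based route that I would actually adopt, there is essentially no obstacle: the substantive checks --- strict positivity of $\sigma(x)$ and finiteness of the third moment --- are both already in hand from the hypotheses of Theorem~\ref{thm:APUB-SP-Correct} and Lemma~\ref{lemma: uniform 4th moment bound}.
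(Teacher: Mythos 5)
Your proposal is correct and matches the paper's treatment: the paper offers no proof of this statement at all, presenting it as the classical Berry--Esseen theorem with a citation to Berry (1941), which is exactly your primary (citation-based) route, including the observation that the universal constant makes the bound automatically uniform over $x\in\cK$. Your ``bookkeeping point'' is in fact a genuine improvement over the paper: the textbook theorem bounds the error by $C\,\bE|F(x,\xi)-\mu(x)|^3/(\sigma^3(x)\sqrt N)$, and since the centered third moment can strictly exceed the raw one (e.g.\ $F$ taking values $-1$ and $K$ with $\Pr(F=K)=K^{-2}$ for large $K$), the paper's raw-moment form is not a literal restatement but needs exactly the factor-of-$8$ adjustment you supply via $(a+b)^3\le 4(a^3+b^3)$ and Jensen; absorbing that factor into $C$ is harmless for the later uses in Lemmas \ref{lemma:BE-uniform} and \ref{lemma:BE-Boot}, but the paper never says so. Your Fourier-analytic sketch is a reasonable outline of the standard proof, and you correctly identify the moderate-$|t|$ characteristic-function estimate as the step that cannot be finessed by the local Taylor bound alone.
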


\begin{lemma}\label{lemma:BE-uniform}
Under the assumptions of Theorem \ref{thm:APUB-SP-Correct}, we have \(
\Pr(S_N(x) \le z) = \Phi(z) + o(1)
\)
uniformly in \(x \in \cK\) and \(z \in \bR\).
\end{lemma}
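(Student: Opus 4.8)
The plan is to uniformize the pointwise Berry-Esseen bound (Theorem \ref{lemma:BE}) by controlling its right-hand side uniformly over $x \in \cK$. For each fixed $x \in \cK$, Theorem \ref{lemma:BE} already supplies
\[
\sup_{z \in \bR} \big| \Pr(S_N(x) \le z) - \Phi(z) \big| \le C \frac{\bE|F(x,\xi)|^3}{\sigma^3(x)\sqrt{N}},
\]
where the constant $C$ is independent of $x$. Since $S_N(x)$ is the standardized sum of the i.i.d.\ variables $F(x,\xi_1),\dots,F(x,\xi_N)$, this bound applies for every $x$. It therefore suffices to bound the numerator $\bE|F(x,\xi)|^3$ from above and the denominator $\sigma^3(x)$ from below, both uniformly in $x \in \cK$, so that the entire right-hand side becomes a deterministic $O(N^{-1/2})$ quantity free of $x$ and $z$.

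The numerator is handled by Lemma \ref{lemma: uniform 4th moment bound} applied with $k = 3$, which gives $\sup_{x \in \cK} \bE|F(x,\xi)|^3 \le K_3 < \infty$. For the denominator, assumption (iii) of Theorem \ref{thm:APUB-SP-Correct} states that $\sigma^{-1}(x)$ is bounded on $\cK$, so there is a constant $\sigma_{\min} > 0$ with $\sigma(x) \ge \sigma_{\min}$ for all $x \in \cK$, whence $\sigma^{-3}(x) \le \sigma_{\min}^{-3}$. Combining the two bounds,
\[
\sup_{x \in \cK} \sup_{z \in \bR} \big| \Pr(S_N(x) \le z) - \Phi(z) \big| \le \frac{C\, K_3}{\sigma_{\min}^3 \sqrt{N}} = O(N^{-1/2}) \longrightarrow 0 \quad \text{as } N \to \infty,
\]
which is precisely the asserted uniform $o(1)$ statement, i.e. $\Pr(S_N(x) \le z) = \Phi(z) + o(1)$ uniformly in $x \in \cK$ and $z \in \bR$.

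I do not anticipate a genuine obstacle here, as the lemma is a routine strengthening of the classical Berry-Esseen theorem: the only two ingredients needed, a uniform third-moment bound and a uniform positive lower bound on the variance, are exactly what Lemma \ref{lemma: uniform 4th moment bound} and assumption (iii) provide. The single point that must be verified carefully is that the Berry-Esseen constant $C$ can be chosen independently of $x$, and this is already built into the statement of Theorem \ref{lemma:BE}; with that in hand, the supremum over $x$ simply inherits the uniform moment and variance bounds without any further analysis.
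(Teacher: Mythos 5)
Your proof is correct and follows essentially the same route as the paper: apply the Berry--Esseen inequality (Theorem \ref{lemma:BE}) pointwise, then bound the numerator uniformly via Lemma \ref{lemma: uniform 4th moment bound} (with $k=3$) and the denominator uniformly via assumption (iii) of Theorem \ref{thm:APUB-SP-Correct}, yielding a deterministic $O(N^{-1/2})$ bound independent of $x$ and $z$. No gaps; this matches the paper's argument in both structure and ingredients.
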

\begin{proof}
    It follows by the assumptions of Theorem \ref{thm:APUB-SP-Correct} that there exist a constant \( \sigma_{\min} \) such that \(\sigma(x) \geq \sigma_{\min}\) for all \(x \in \cK\). In addition, by Lemma \ref{lemma: uniform 4th moment bound}, $\sup_{x\in \cK}\bE|F(x,\xi)|^3\le K_3.$
    Then, by Theorem \ref{lemma:BE}, we have
    \[
        \sup_{x\in \cK}\sup_{z\in\bR} |\Pr( S_N(x) \leq z ) - \Phi(z)| \le C \frac{\sup_{x\in \cK}\bE|F(x,\xi)|^3}{\inf_{x\in \cK}\sigma^3(x)\sqrt{N}} \le C\frac{K_3}{\sigma_{min}^3\sqrt{N}} = o(1).
    \]
\end{proof}

\subsubsection{Uniform Berry-Esseen Inequality for Percentile Bootstrap Sampling.} \label{section: UniformEdgeworth-Bootstrap}
We now show the uniform asymptotic behavior of the moments of $\widehat \dP_N$ in the following lemma.
\begin{lemma}\label{lemma:C1*-C3*}
    Under the assumptions of Theorem \ref{thm:APUB-SP-Correct}, the following two conditions hold as $N\to \infty$ w.p.1.,
    \begin{enumerate}[leftmargin=*, label=\normalfont(\textbf{\roman*}), align=left]
        \item \(
        \sup_{ x \in \cK } \left| \bE\left[|F(x, \zeta)|^k \ \left| \ \widehat{\dP}_N\right]\right. - \bE \left[\left| F(x, \xi) \right|^k \right] \right|\to 0
        \) for $k\le 3$. \label{asp:C1*}
        \item $\sup_{x\in \cX} \left|\widehat\sigma_N(x) -\sigma(x) \right|\to 0$. \label{asp:C2*}
    \end{enumerate}
\end{lemma}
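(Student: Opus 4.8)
The plan is to recognize that part \ref{asp:C1*} is a \emph{uniform} strong law of large numbers over the compact set $\cK$, and to deduce part \ref{asp:C2*} from it. The crucial observation is that, since $\zeta \sim \widehat{\dP}_N$ is uniform on the atoms $\xi_1,\dots,\xi_N$, the conditional expectation collapses to an empirical average: $\bE[|F(x,\zeta)|^k \mid \widehat{\dP}_N] = \frac{1}{N}\sum_{n=1}^N |F(x,\xi_n)|^k$. Hence part \ref{asp:C1*} asks precisely for $\sup_{x \in \cK} \big| \frac{1}{N}\sum_{n=1}^N |F(x,\xi_n)|^k - \bE[|F(x,\xi)|^k] \big| \to 0$ $\wpo$. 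I would establish this by the classical three-ingredient argument: (a) the pointwise SLLN for each fixed $x$, valid because $\bE[|F(x,\xi)|^k] \le K_k < \infty$ for $k \le 3$ by Lemma \ref{lemma: uniform 4th moment bound}; (b) a Lipschitz (equicontinuity) bound in $x$ with an integrable Lipschitz constant; and (c) a finite $\delta$-net covering of the compact $\cK$ to upgrade pointwise to uniform convergence.

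The heart of the argument is ingredient (b). Writing $g_k(x,\xi) := |F(x,\xi)|^k$, the identity $a^k - b^k = (a-b)\sum_{i=0}^{k-1} a^i b^{k-1-i}$ together with the reverse triangle inequality $\big||F(x,\xi)| - |F(y,\xi)|\big| \le |F(x,\xi) - F(y,\xi)| \le L(\xi)\|x-y\|$ gives $|g_k(x,\xi) - g_k(y,\xi)| \le L(\xi)\|x-y\| \cdot \sum_{i=0}^{k-1} |F(x,\xi)|^i |F(y,\xi)|^{k-1-i}$. Using the envelope $|F(x,\xi)| \le |F(x_0,\xi)| + D L(\xi)$ from Lemma \ref{lemma: domination} (with $D := \sup_{x\in\cK}\|x-x_0\|$), the effective Lipschitz constant $\Lambda_k(\xi)$ is a degree-$k$ polynomial in $|F(x_0,\xi)|$ and $L(\xi)$, whose integrability for $k \le 3$ follows from $\bE[|F(x_0,\xi)|^3] < \infty$ and $\bE[L(\xi)^3] < \infty$ via H\"older's inequality (e.g.\ $\bE[L(\xi)|F(x_0,\xi)|^2] \le \bE[L^3]^{1/3}\bE[|F(x_0,\xi)|^3]^{2/3}$). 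With $\bE[\Lambda_k(\xi)] < \infty$ in hand, I cover $\cK$ by balls $B(x_j,\delta)$, $j=1,\dots,J$; for $x \in B(x_j,\delta)$ the triangle inequality bounds the deviation by the deviation at $x_j$ plus $\delta\big(\frac{1}{N}\sum_n \Lambda_k(\xi_n) + \bE[\Lambda_k(\xi)]\big)$. Letting $N \to \infty$ (the finite SLLN at the $J$ centers, together with $\frac{1}{N}\sum_n \Lambda_k(\xi_n) \to \bE[\Lambda_k]$) yields $\limsup_N \sup_{x\in\cK} |\cdots| \le 2\delta \bE[\Lambda_k(\xi)]$ on a single probability-one event; since $\delta$ is arbitrary, this proves part \ref{asp:C1*}.

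For part \ref{asp:C2*}, I would use the variance decompositions $\widehat{\sigma}_N^2(x) = \frac{1}{N}\sum_n F(x,\xi_n)^2 - \widehat{\mu}_N(x)^2$ and $\sigma^2(x) = \bE[F(x,\xi)^2] - \mu(x)^2$. Part \ref{asp:C1*} with $k=2$ gives $\sup_x|\frac{1}{N}\sum_n F(x,\xi_n)^2 - \bE[F(x,\xi)^2]| \to 0$, and the same covering argument applied to $F$ itself (Lipschitz constant $L(\xi)$, $\bE[L]<\infty$) gives $\sup_x|\widehat{\mu}_N(x) - \mu(x)| \to 0$; since $\mu$ is bounded on $\cK$ and $\widehat{\mu}_N + \mu$ is therefore eventually uniformly bounded, the term $\widehat{\mu}_N(x)^2 - \mu(x)^2$ vanishes uniformly, whence $\sup_x|\widehat{\sigma}_N^2(x) - \sigma^2(x)| \to 0$ $\wpo$. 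Finally, assumption (iii) of Theorem \ref{thm:APUB-SP-Correct} gives $\sigma(x) \ge \sigma_{\min} > 0$ on $\cK$, so $|\widehat{\sigma}_N(x) - \sigma(x)| = |\widehat{\sigma}_N^2(x) - \sigma^2(x)|/(\widehat{\sigma}_N(x)+\sigma(x)) \le \sigma_{\min}^{-1}|\widehat{\sigma}_N^2(x)-\sigma^2(x)|$ transfers uniform convergence to the standard deviations. The main obstacle is the bookkeeping in step (b): verifying that $\Lambda_k$ is genuinely integrable for $k=2,3$ (precisely where the third-moment hypotheses are consumed) and arranging the covering and SLLN steps so that uniform convergence holds simultaneously on one probability-one set.
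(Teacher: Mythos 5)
Your proof is correct, and it reaches the lemma by a more self-contained route than the paper does. For part \ref{asp:C1*} the paper simply checks the hypotheses of a citable uniform law of large numbers: $F(\cdot,\xi)$ is continuous on $\cK$ (convexity, Assumption \ref{asp:ContinuousConvex}) and is dominated by the integrable envelope of Lemma \ref{lemma: domination}, so Theorem 7.53 of \citet{shapiro_lectures_2021} (restated as Theorem \ref{thm:ULLN}) yields the uniform convergence in one stroke. Your three-ingredient argument --- pointwise SLLN at the points of a finite $\delta$-net, an integrable Lipschitz constant $\Lambda_k(\xi)$ for $|F(\cdot,\xi)|^k$ obtained through H\"older, and the covering step --- is in effect a proof of that cited theorem specialized to the Lipschitz setting: where the paper uses assumption (ii) of Theorem \ref{thm:APUB-SP-Correct} only to manufacture the dominating envelope, you use it as an explicit modulus of equicontinuity. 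For part \ref{asp:C2*} the decompositions differ slightly but equivalently: the paper centers at the true mean, writing $\widehat{\sigma}_N^2(x) = \bE[(F(x,\zeta)-\mu(x))^2 \mid \widehat{\dP}_N] - (\widehat{\mu}_N(x)-\mu(x))^2$, while you subtract the squared sample mean from the raw second moment. Your write-up is in fact tighter at two points the paper glosses over: first, the paper's display \eqref{eq:ULLN-boot} is stated for $|F(x,\cdot)|^k$ only, yet in part \ref{asp:C2*} it is invoked for $(F(x,\cdot)-\mu(x))^2$ and, implicitly, for $F(x,\cdot)$ itself, whereas you explicitly supply the separate uniform-convergence argument for $F$ (and note that $k=2$ covers $F^2=|F|^2$); second, the paper stops at $\sup_{x\in\cK}|\widehat{\sigma}_N^2(x)-\sigma^2(x)|\to 0$, whereas you complete the passage to the standard deviations via $\widehat{\sigma}_N(x)+\sigma(x)\ge\sigma_{\min}$ (the elementary inequality $|\sqrt{a}-\sqrt{b}|\le\sqrt{|a-b|}$ would do this even without assumption (iii)). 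The trade-off is the usual one: the paper's proof is shorter because it outsources the hard analysis to a black-box theorem; yours is longer but self-contained and more careful about exactly which function the uniform law is being applied to.
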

\begin{proof}
    We now prove \ref{asp:C1*}. 
    Assumption \ref{asp:ContinuousConvex} states that $F(x,\xi)$ is continuous in $x\in \cK$ for any $\xi$. In addition, Lemma \ref{lemma: domination} implies $F(x,\xi)$ dominated by an integrable function for each $x\in \cK$. According to Theorem 7.53
 in \cite{shapiro_lectures_2021} (Theorem \ref{thm:ULLN}), we know that, as $N\to \infty$,
\begin{equation}\label{eq:ULLN-boot}
    \sup_{ x \in \cK } \left| \bE\left[|F(x, \zeta)|^k \ \left| \ \widehat{\dP}_N\right]\right. - \bE \left[\left| F(x, \xi) \right|^k \right] \right|\to 0, \quad \wpo.
\end{equation}

We next prove \ref{asp:C2*}. Since
\begin{align*}
    \widehat{\sigma}_N^{2}(x) &=\bE\left[\left(F(x,\zeta)-\widehat{\mu}_N(x)\right)^2\ \Big|\  \widehat{\dP}_N\right] \\
    &= \bE\left[\left( (F(x,\zeta) - \mu(x))- (\widehat{\mu}_N(x) - \mu(x)) \right)^2\ \Big|\  \widehat{\dP}_N\right] \\
    &= \bE\left[(F(x,\zeta)-\mu(x))^2\ \Big|\  \widehat{\dP}_N\right] - \left( \widehat{\mu}_N(x) - \mu(x) \right)^2,
\end{align*}
we obtain
\[
\begin{aligned}
&\sup_{x\in\cK}\left|\widehat{\sigma}^2_N(x) - \sigma^2(x)\right|
\le \sup_{ x \in \cK } \left| \bE\left[(F(x,\zeta)-\mu(x))^2\ \Big|\  \widehat{\dP}_N\right] - \sigma^2(x) \right| + \sup_{x\in \cK} \left( \widehat{\mu}_N(x) - \mu(x) \right)^2.
\end{aligned}
\]
Equation \eqref{eq:ULLN-boot} has shown that the two terms at the right-hand-side of the above inequality converge to 0 as $N\to \infty$ $\wpo$, which implies \ref{asp:C2*} holds.
\end{proof}

The following lemma shows the Berry-Esseen Inequality for the percentile bootstrap approach.
\begin{lemma}\label{lemma:BE-Boot}
Under the assumptions of Theorem \ref{thm:APUB-SP-Correct}, \(
\Pr \left(  S_N^*(x) \leq z \ \left|\ {\widehat{\dP}_N} \right) \right. = \Phi(z) + o(1)
\)
uniformly in \( x \in \cK \) and \(z \in \bR\) w.p.1.
\end{lemma}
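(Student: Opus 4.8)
The plan is to apply the Berry--Esseen inequality conditionally on the empirical distribution $\widehat{\dP}_N$ and then show that the resulting error bound vanishes uniformly in $x \in \cK$. Conditionally on a fixed realization of $\widehat{\dP}_N$, the bootstrap variables $F(x,\zeta_1), \dots, F(x,\zeta_N)$ are i.i.d. with conditional mean $\widehat{\mu}_N(x)$ and conditional variance $\widehat{\sigma}_N^2(x)$, so $S_N^*(x)$ is a standardized sum of i.i.d. terms. Applying Theorem \ref{lemma:BE} to this conditional law therefore yields
\[
\sup_{z \in \bR}\left|\Pr\left(S_N^*(x) \le z \,\middle|\, \widehat{\dP}_N\right) - \Phi(z)\right| \le C\,\frac{\widehat{\rho}_N(x)}{\widehat{\sigma}_N^3(x)\sqrt{N}},
\]
where $\widehat{\rho}_N(x) := \bE\left[\,|F(x,\zeta) - \widehat{\mu}_N(x)|^3 \,\middle|\, \widehat{\dP}_N\right]$ and $C$ is the universal Berry--Esseen constant. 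It then suffices to bound the numerator from above and the denominator away from zero, both uniformly in $x \in \cK$ and w.p.1 for all sufficiently large $N$.

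For the numerator, I would invoke the elementary inequality $|a-b|^3 \le 4(|a|^3 + |b|^3)$ to obtain $\widehat{\rho}_N(x) \le 4\big(\bE[\,|F(x,\zeta)|^3 \mid \widehat{\dP}_N] + |\widehat{\mu}_N(x)|^3\big)$, and then bound $|\widehat{\mu}_N(x)|^3 \le \bE[\,|F(x,\zeta)|^3 \mid \widehat{\dP}_N]$ by Jensen's inequality. The $k=3$ case of Lemma \ref{lemma:C1*-C3*}, combined with the uniform moment bound $\sup_{x\in\cK}\bE|F(x,\xi)|^3 \le K_3$ from Lemma \ref{lemma: uniform 4th moment bound}, then shows $\sup_{x\in\cK}\bE[\,|F(x,\zeta)|^3 \mid \widehat{\dP}_N] \le K_3 + 1$ w.p.1 for all sufficiently large $N$. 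Hence $\sup_{x\in\cK}\widehat{\rho}_N(x) \le 8(K_3+1)$ w.p.1 for large $N$.

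For the denominator, assumption (iii) of Theorem \ref{thm:APUB-SP-Correct} provides a lower bound $\sigma(x) \ge \sigma_{\min} > 0$ on $\cK$, while the uniform variance convergence in Lemma \ref{lemma:C1*-C3*} gives $\sup_{x\in\cK}|\widehat{\sigma}_N(x) - \sigma(x)| \to 0$ w.p.1. Together these guarantee $\widehat{\sigma}_N(x) \ge \sigma_{\min}/2$ uniformly in $x$ w.p.1 for all large $N$. Substituting the two bounds, the Berry--Esseen error is at most $\frac{8C(K_3+1)}{(\sigma_{\min}/2)^3\sqrt{N}}$, which tends to $0$ uniformly in $x \in \cK$ as $N \to \infty$ w.p.1, establishing the claim.

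The main obstacle is that the Berry--Esseen bound, though a deterministic inequality for each fixed $\widehat{\dP}_N$, is itself a random function of the data, so the uniformity in $x$ and the almost-sure statements must be coordinated along a single sample path. The crucial inputs are the almost-sure uniform convergence of the conditional moments in Lemma \ref{lemma:C1*-C3*} and the strictly positive variance lower bound from assumption (iii); once these are in place, transferring the classical Berry--Esseen estimate to the bootstrap setting is routine.
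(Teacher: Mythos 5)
Your proof is correct and follows essentially the same route as the paper's: apply the Berry--Esseen inequality conditionally on $\widehat{\dP}_N$, then use Lemma \ref{lemma:C1*-C3*} together with Lemma \ref{lemma: uniform 4th moment bound} and the variance lower bound from assumption (iii) of Theorem \ref{thm:APUB-SP-Correct} to control the numerator and denominator uniformly in $x \in \cK$ w.p.1. The only cosmetic difference is that you work with the centered conditional third moment and reduce it to raw moments via $|a-b|^3 \le 4(|a|^3+|b|^3)$ and Jensen, whereas the paper plugs the raw conditional moment directly into its stated form of the Berry--Esseen bound.
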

\begin{proof}
    By the assumptions of Theorem \ref{thm:APUB-SP-Correct}, we know there exist \( \sigma_{min} \) such that \( \sigma(x)>\sigma_{min}>0\). In addition, by Lemma \ref{lemma: uniform 4th moment bound}, $\sup_{x\in \cK}\bE\left[|F(x,\xi)|^3\right]\le K_3.$
    By Lemma \ref{lemma:C1*-C3*}, the following two inequalities hold $\wpo$,
    \begin{align*}
       & \lim_{N\to\infty}\sup_{x\in \cK}\bE \left[ \left| F(x, \zeta) \right|^3 \ \left|\ {\widehat{\dP}_N}\right]\right. \\
       \le  &\lim_{N\to\infty}\sup_{x\in \cK} \left| \bE\left[ |F(x, \zeta)|^3 \ \left|\ {\widehat{\dP}_N} \right]\right. - \bE \left| F(x, \xi) \right|^3 \right| + \sup_{x\in \cK} \bE \left| F(x, \xi) \right|^3 \le   K_3, \\
       & \lim_{N\to \infty} \inf_{x\in \cK} \left| \widehat{\sigma}_N(x)\right| \ge  \inf_{x\in \cK} \left| \sigma(x)\right|-\lim_{N\to \infty} \sup_{x\in \cK} \left| \widehat{\sigma}_N(x)-\sigma(x)\right|\ge \sigma_{min}.
    \end{align*}
Then, by Lemma \ref{lemma:BE}, we have
\[  
\begin{aligned}
    \lim_{N\to \infty}\sup_{x\in \cK}\sup_{z\in\bR}\left|\Pr \left(  S_N^*(x) \leq z \ \left|\  \widehat{\dP}_N \right) \right. - \Phi(z)\right|  &\le \lim_{N\to \infty} C \frac{\sup_{x\in \cK}\bE\left[|F(x,\xi)|^3\ \Big|\  \widehat{\dP}_N\right]}{\inf_{x\in \cK}\widehat\sigma^3_N(x)\sqrt{N}} \\
    & \le \lim_{N\to \infty} C\frac{K_3}{\sigma_{min}^3\sqrt{N}} =0.
\end{aligned}
\]
\end{proof}

\subsubsection{Uniform Convergence on Bootstrap Percentile.}\label{section: BootPercentile}

Define 
\[
z_{N}^\alpha(x) := \inf_{z}\left\{z\in \mathbb{R}:\Pr \left(  S_N^*(x) \leq z \ \left|\  \widehat{\dP}_N \right) \right. \ge 1-\alpha\right\}.
\]
Recall
\[
        \bU_{\text{\tiny Efron}}^\alpha [ \mu(x) | \widehat \dP_N ] = \inf \left\{ z \in \mathbb{R} : \Pr\left(  \widehat{\mu}_N^*(x) \le z \ \Big|\ \widehat{\dP}_N  \right) \ge 1-\alpha \right\}.
\]
We derive
\[
    \bU_{\text{\tiny Efron}}^\alpha [ \mu(x) | \widehat \dP_N ] = \widehat{\mu}_N(x) + \frac{\widehat{\sigma}_N(x)}{\sqrt{N}}z_{N}^\alpha(x).
\]
Letting \[T_N^\alpha(x) := -\frac{ \widehat{\sigma}_N(x) }{ \sigma(x) } \cdot z_{N}^\alpha(x), \] 
we further express the coverage probability as
    \begin{align}
        &\Pr\left( \bU_{\text{\tiny Efron}}^\alpha [ \mu(x) |\widehat \dP_N ] \ge \mu(x)    \right) = \Pr\left( \widehat{\mu}_N(x) + \frac{\widehat{\sigma}_N(x)}{\sqrt{N}} \cdot z_{N}^\alpha(x) \ge \mu(x)  \right) \notag \\
=& \Pr\left( \frac{ \sqrt{N}(\widehat{\mu}_N(x) - \mu(x)) }{ \sigma(x) } \ge -\frac{ \widehat{\sigma}_N(x) }{ \sigma(x) } \cdot z_{N}^\alpha(x)\right) 
= \Pr\Big(S_N(x)\ge T_N^\alpha(x)\Big). \label{equ:U_Efron_Pr}
    \end{align}


\begin{lemma}\label{lemma:TN}
    \(
        \lim_{N \to \infty} \sup_{x \in \cK} | T_N^\alpha(x) - \Phi^{-1}(\alpha) | = 0, \quad \wpo.
    \)
\end{lemma}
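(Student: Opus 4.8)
The plan is to deduce Lemma \ref{lemma:TN} from two uniform convergences already in hand: the uniform (in $x$ and $z$) convergence of the conditional bootstrap cdf to $\Phi$ from Lemma \ref{lemma:BE-Boot}, and the uniform convergence $\sup_{x\in\cK}|\widehat\sigma_N(x)-\sigma(x)|\to 0$ from part \ref{asp:C2*} of Lemma \ref{lemma:C1*-C3*}. First I would record that $z_N^\alpha(x)$ is, by definition, the $(1-\alpha)$-quantile of the conditional law of $S_N^*(x)$, and that the target $\Phi^{-1}(\alpha)$ equals $-\Phi^{-1}(1-\alpha)$ by the symmetry of the standard normal. Thus it suffices to establish (a) $z_N^\alpha(x)\to \Phi^{-1}(1-\alpha)$ uniformly on $\cK$ \wpo, and (b) $\widehat\sigma_N(x)/\sigma(x)\to 1$ uniformly on $\cK$ \wpo, and then multiply.

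For (a) I would argue by bracketing. Write $q:=\Phi^{-1}(1-\alpha)$ and $\epsilon_N := \sup_{x\in\cK}\sup_{z\in\bR}\big|\Pr(S_N^*(x)\le z\mid\widehat\dP_N)-\Phi(z)\big|$, so $\epsilon_N\to 0$ \wpo\ by Lemma \ref{lemma:BE-Boot}. Fix $\delta>0$; since $\Phi$ is continuous and strictly increasing, $\eta:=\min\{\Phi(q)-\Phi(q-\delta),\ \Phi(q+\delta)-\Phi(q)\}>0$ depends only on $\delta$. Once $N$ is large enough that $\epsilon_N<\eta$, evaluating the conditional cdf at $q\pm\delta$ gives $\Pr(S_N^*(x)\le q+\delta\mid\widehat\dP_N)\ge \Phi(q+\delta)-\epsilon_N>1-\alpha$ and $\Pr(S_N^*(x)\le q-\delta\mid\widehat\dP_N)\le \Phi(q-\delta)+\epsilon_N<1-\alpha$, both uniformly in $x$. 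By monotonicity of the cdf and the infimum definition of $z_N^\alpha(x)$, these two inequalities force $q-\delta\le z_N^\alpha(x)\le q+\delta$ for every $x\in\cK$, hence $\sup_{x\in\cK}|z_N^\alpha(x)-q|\le\delta$ for all large $N$. That is the desired uniform quantile convergence.

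For (b), Assumption (iii) of Theorem \ref{thm:APUB-SP-Correct} supplies $\sigma_{\min}>0$ with $\sigma(x)\ge\sigma_{\min}$ on $\cK$, so $\sup_{x\in\cK}|\widehat\sigma_N(x)/\sigma(x)-1|\le \sigma_{\min}^{-1}\sup_{x\in\cK}|\widehat\sigma_N(x)-\sigma(x)|\to 0$ \wpo. To combine, I would use the identity $T_N^\alpha(x)+q=-\tfrac{\widehat\sigma_N(x)}{\sigma(x)}\big(z_N^\alpha(x)-q\big)+q\big(1-\tfrac{\widehat\sigma_N(x)}{\sigma(x)}\big)$; taking $\sup_{x\in\cK}$, the first term is bounded by the eventually-bounded ratio times $\sup_x|z_N^\alpha(x)-q|$ and the second by $|q|\sup_x|1-\widehat\sigma_N(x)/\sigma(x)|$, both of which vanish on the common probability-one event coming from Lemmas \ref{lemma:BE-Boot} and \ref{lemma:C1*-C3*}. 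Since $-q=\Phi^{-1}(\alpha)$, this yields the claim.

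The main obstacle I anticipate is step (a): upgrading the uniform-in-$(x,z)$ cdf convergence to a uniform-in-$x$ quantile convergence. The textbook quantile-convergence result (Lemma 21.2 of \cite{vaart_asymptotic_1998}, cited earlier) is pointwise, so the bracketing must be made explicit, and the feature that makes it uniform is precisely that the gap $\eta$ depends only on $\delta$ and the fixed limit $\Phi$---never on $x$---while $\epsilon_N$ controls all $x$ simultaneously. Some care is also needed to intersect the finitely many probability-one events so that the entire argument holds \wpo.
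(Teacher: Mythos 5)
Your proposal is correct and follows essentially the same route as the paper's own proof: the paper likewise first upgrades Lemma \ref{lemma:BE-Boot} to uniform convergence of $z_N^\alpha(x)$ to $\Phi^{-1}(1-\alpha)$ via exactly your bracketing argument (its $\varepsilon(\delta)$ is your $\eta$, since $\Phi(q)=1-\alpha$), then combines this with $\sup_{x\in\cK}|\widehat\sigma_N(x)/\sigma(x)-1|\to 0$ from Lemma \ref{lemma:C1*-C3*} through an algebraically equivalent two-term decomposition of $T_N^\alpha(x)-\Phi^{-1}(\alpha)$, intersecting the same probability-one events. The only differences are presentational: the paper organizes the argument via explicit event sets $\Upsilon_0,\Upsilon_1,\Upsilon_2$ and set inclusions, and multiplies the ratio error by $-z_N^\alpha(x)$ rather than by the constant $q$, but these are immaterial.
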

\begin{proof}
    Consider a sample path $\boldsymbol{\xi}=(\xi_1,\xi_2, \dots)$ and denote its sample space by $\Xi^\infty$. Define
\begin{align*}
&\Upsilon:= \left\{ \boldsymbol{\xi} \in \Xi^\infty: \lim_{N \to \infty} \sup_{x \in \cK} | T_N^\alpha(x) - \Phi^{-1}(\alpha) | = 0\right\},\\
& \Upsilon_0: = \left\{ \boldsymbol{\xi} \in \Xi^\infty: \lim_{N\to\infty}\sup_{x\in \cK}\left|\frac{ \widehat{\sigma}_N(x) }{ \sigma(x) }  - 1 \right| =0 \right\}, \\
& \Upsilon_1: = \left\{ \boldsymbol{\xi} \in \Xi^\infty: \lim_{N\to\infty}\sup_{x\in \cK}\left|\left(-z_{N}^\alpha(x)\right) - \Phi^{-1}(\alpha)  \right| =0 \right\}, \\
\intertext{and}   & \Upsilon_2 := \left\{ \boldsymbol{\xi} \in \Xi^\infty: \lim_{N\to\infty}\sup_{x\in \cK}\sup_{z\in\bR}\left|\Pr \left(  S_N^*(x) \leq z \ \left|\  \widehat{\dP}_N \right) \right. - \Phi(z)\right| =0 \right\}.
\end{align*}
Our goal is to prove $\Pr(\boldsymbol{\xi}\in \Upsilon) = 1$. We now give our proof in the following three steps.

\textbf{i) Prove that $(\Upsilon_0\cap \Upsilon_1) \subseteq \Upsilon$.} 
Consider $\boldsymbol{\xi} \in \Upsilon_0\cap \Upsilon_1$.
Observe that
\[
\begin{aligned}
    \left| T_N^\alpha(x) - \Phi^{-1}(\alpha) \right| &= \left| \left( \dfrac{ \widehat{\sigma}_N(x) }{ \sigma(x) } - 1 \right) \left( -z_{N}^\alpha(x) \right) + \left( -z_{N}^\alpha(x) - \Phi^{-1}(\alpha) \right) \right|\\
    &\le \left| \dfrac{ \widehat{\sigma}_N(x) }{ \sigma(x) } - 1 \right| \left| -z_{N}^\alpha(x) \right| + \left| -z_{N}^\alpha(x) - \Phi^{-1}(\alpha) \right|.
\end{aligned}
\]
By definition of $\Upsilon_1$, we know \(-z_{N}^\alpha(x) \to \Phi^{-1}(\alpha)\) uniformly in \( x \in \mathcal{K} \). There exists \( N_1 \) such that for all \( N \geq N_1 \), 
\[
\sup_{x \in \mathcal{K}} \left| -z_{N}^\alpha(x) \right| \leq \left| \Phi^{-1}(\alpha) \right| + 1.
\]
Similarly, by definition of $\Upsilon_0$ and $\Upsilon_1$, for any $\varepsilon>0$, there exists \( N_2 \) such that for all \( N \geq N_2 \), 
\[
\sup_{x \in \mathcal{K}} \left| \dfrac{ \widehat{\sigma}_N(x) }{ \sigma(x) } - 1 \right| < \varepsilon,
\quad \text{and} \quad
\sup_{x \in \mathcal{K}} \left| -z_{N}^\alpha(x) - \Phi^{-1}(\alpha) \right| < \varepsilon.
\]
Therefore, for \( N \geq N_0 := \max\{ N_1, N_2 \} \), we have
\[
\left| T_N^\alpha(x) - \Phi^{-1}(\alpha) \right| \leq \varepsilon \left( \left| \Phi^{-1}(\alpha) \right| + 1 \right) + \varepsilon.
\]
Since \( \varepsilon > 0 \) is arbitrary, it follows that
\[
\lim_{N \to \infty} \sup_{x \in \mathcal{K}} \left| T_N^\alpha(x) - \Phi^{-1}(\alpha) \right| = 0.
\]
Thus, \( \boldsymbol{\xi} \in \Upsilon \), and therefore 
\(
(\Upsilon_0 \cap \Upsilon_1) \subseteq \Upsilon,
\) which completes the proof for Step i).

\textbf{ii) Prove that $ \Upsilon_2 \subseteq \Upsilon_1$.}
Consider $\boldsymbol{\xi}\in \Upsilon_2$ and a given $\delta>0$. Since \( \Phi(z) \) is continuous and strictly increasing, its inverse \( \Phi^{-1}(\cdot) \) is also continuous and strictly increasing on its domain \( (0,1) \). Let
\[
\varepsilon(\delta) := \min \left\{ \Phi\left( \Phi^{-1}(1 - \alpha) + \delta \right) - (1 - \alpha), \ (1 - \alpha) - \Phi\left( \Phi^{-1}(1 - \alpha) - \delta \right) \right\}.
\]
By definition of $\Upsilon_2$, there exists $N_0$ such that for all \( N \geq N_3 \), we have
\[
\sup_{x \in \mathcal{K}} \sup_{z \in \mathbb{R}} \left| \Pr \left( S_N^*(x) \leq z \ \bigg| \  \widehat{\dP}_N \right) - \Phi(z) \right| < \varepsilon(\delta).
\]
In other words, for all \( N \geq N_3 \), we have:
\[
\left| \Pr \left( S_N^*(x) \leq z \ \bigg| \  \widehat{\dP}_N \right) - \Phi(z) \right| < \varepsilon(\delta), \quad \text{for all } x \in \cK, \ z \in \bR.
\]
Thus, at \( z = \Phi^{-1}(1 - \alpha) - \delta \):
\[
\begin{aligned}
\Pr \left( S_N^*(x) \leq \Phi^{-1}(1 - \alpha) - \delta \ \bigg| \  \widehat{\dP }_N \right) &< \Phi\left( \Phi^{-1}(1 - \alpha) - \delta \right) + \varepsilon(\delta) \\
&\le (1 - \alpha) - \varepsilon(\delta) + \varepsilon(\delta) = 1 - \alpha.
\end{aligned}
\]
Similarly, at \( z = \Phi^{-1}(1 - \alpha) + \delta \):
\[
\begin{aligned}
    \Pr \left( S_N^*(x) \leq \Phi^{-1}(1 - \alpha) + \delta \ \bigg| \  \widehat{\dP}_N \right) 
    &> \Phi\left( \Phi^{-1}(1 - \alpha) + \delta \right) - \varepsilon(\delta) \\
    &\ge (1 - \alpha) + \varepsilon(\delta) - \varepsilon(\delta) = 1 - \alpha.
\end{aligned}
\]
Therefore,  it follows by the definition of \( z_{N}^\alpha(x) \) that,
for all $N \geq N_3 $ and $x\in \cK$, 
\(
z_{N}^\alpha(x) \in \left[ \Phi^{-1}(1 - \alpha) - \delta,\ \Phi^{-1}(1 - \alpha) + \delta \right].
\)
Thus,
\[
    \sup_{x\in \cK} \left| z_{N}^\alpha(x)  - \Phi^{-1}(1-\alpha) \right| \le \delta.
\]
Note that $\Phi^{-1}(\alpha) = -\Phi^{-1}(1 - \alpha)$. When choosing an arbitrarily small $\delta$, we obtain
\[
\lim_{N \to \infty} \sup_{x \in \mathcal{K}} \left| \left( -z_{N}^\alpha(x) \right) - \Phi^{-1}(\alpha) \right| = 0,
\]
which completes the proof for Step ii).

\textbf{iii) Prove that $\Pr(\boldsymbol{\xi}\in \Upsilon) = 1$.}
By  in Lemma \ref{lemma:C1*-C3*}, it is clear to see \(\Pr(\boldsymbol{\xi}\in \Upsilon_0) =1\). In addition, by Lemma \ref{lemma:BE-Boot},
$\Pr(\xi\in \Upsilon_2)=1$. By previous two steps, we establish 
\[
(\Upsilon_0\cap\Upsilon_2) \subseteq (\Upsilon_0\cap\Upsilon_1) \subseteq \Upsilon.
\]
Therefore, $\Pr(\boldsymbol{\xi} \in \Upsilon)=1.$ It completes the proof of Lemma \ref{lemma:TN}.
\end{proof}

\subsubsection{Proof of Equation \eqref{eq:uniform-Efron-Accuracy}.}\label{section: CoverageProbability}

{
Recall from \eqref{equ:U_Efron_Pr} that for every $x\in \cK$,
\begin{equation*}
\Pr\!\left(\bU^{\alpha}_{\mathrm{Efron}}[\mu(x)\mid \widehat{\mathbb P}_N]\ge \mu(x)\right)
=\Pr\!\left(S_N(x)\ge T_N^\alpha(x)\right).
\end{equation*}
Hence it suffices to prove
\begin{equation}\label{eq:eq16-target}
\lim_{N\to\infty}\sup_{x\in K}\left|\Pr\!\left(S_N(x)\ge T_N^\alpha(x)\right)-(1-\alpha)\right|=0.
\end{equation}

Let $c:=\Phi^{-1}(\alpha)$ so that $1-\Phi(c)=1-\alpha$.
Fix an arbitrary $\varepsilon>0$ and define the event
\[
B_{N,\varepsilon}(x):=\{|T_N^\alpha(x)-c|\le \varepsilon\}, \qquad x\in \cK.
\]
On $B_{N,\varepsilon}(x)$ we have $c-\varepsilon \le T_N^\alpha(x)\le c+\varepsilon$.

\paragraph{Step 1: Two-sided sandwich bounds for each fixed $x$.}
First, on the event $B_{N,\varepsilon}(x)$,
\[
\{S_N(x)\ge T_N^\alpha(x)\}\cap B_{N,\varepsilon}(x)
\subseteq \{S_N(x)\ge c-\varepsilon\}.
\]
Therefore,
\begin{align}
\Pr(S_N(x)\ge T_N^\alpha(x))
&= \Pr(\{S_N(x)\ge T_N^\alpha(x)\}\cap B_{N,\varepsilon}(x))
   + \Pr(\{S_N(x)\ge T_N^\alpha(x)\}\cap B_{N,\varepsilon}(x)^c) \nonumber\\
&\le \Pr(S_N(x)\ge c-\varepsilon) + \Pr(B_{N,\varepsilon}(x)^c) \nonumber\\
&= \Pr(S_N(x)\ge c-\varepsilon) + \Pr(|T_N^\alpha(x)-c|>\varepsilon). \label{eq:sandwich-upper}
\end{align}
Similarly, since $T_N^\alpha(x)\le c+\varepsilon$ on $B_{N,\varepsilon}(x)$,
\[
\{S_N(x)\ge c+\varepsilon\}\cap B_{N,\varepsilon}(x)\subseteq \{S_N(x)\ge T_N^\alpha(x)\},
\]
and thus
\begin{align}
\Pr(S_N(x)\ge T_N^\alpha(x))
&\ge \Pr(S_N(x)\ge c+\varepsilon) - \Pr(B_{N,\varepsilon}(x)^c) \nonumber\\
&= \Pr(S_N(x)\ge c+\varepsilon) - \Pr(|T_N^\alpha(x)-c|>\varepsilon). \label{eq:sandwich-lower}
\end{align}

\paragraph{Step 2: Take $\sup_{x\in \cK}$ and reduce the threshold term to a uniform event.}
Note that for every $x\in \cK$,
\[
\{|T_N^\alpha(x)-c|>\varepsilon\}\subseteq \left\{\sup_{u\in \cK}|T_N^\alpha(u)-c|>\varepsilon\right\},
\]
hence
\begin{equation}\label{eq:uniform-threshold-prob}
\sup_{x\in \cK}\Pr(|T_N^\alpha(x)-c|>\varepsilon)
\le \Pr\!\left(\sup_{u\in \cK}|T_N^\alpha(u)-c|>\varepsilon\right).
\end{equation}
Combining \eqref{eq:sandwich-upper}--\eqref{eq:sandwich-lower} with \eqref{eq:uniform-threshold-prob} yields
\begin{align}
\sup_{u\in \cK}\Pr(S_N(x)\ge T_N^\alpha(x))
&\le \sup_{u\in \cK}\Pr(S_N(x)\ge c-\varepsilon)
+ \Pr\!\left(\sup_{u\in \cK}|T_N^\alpha(u)-c|>\varepsilon\right), \label{eq:upper-sup}\\
\inf_{u\in \cK}\Pr(S_N(x)\ge T_N^\alpha(x))
&\ge \inf_{u\in \cK}\Pr(S_N(x)\ge c+\varepsilon)
- \Pr\!\left(\sup_{u\in \cK}|T_N^\alpha(u)-c|>\varepsilon\right). \label{eq:lower-inf}
\end{align}

\paragraph{Step 3: Apply Lemma \ref{lemma:TN} and Lemma \ref{lemma:BE-uniform}.}
By Lemma \ref{lemma:TN},
\[
\sup_{u\in \cK}|T_N^\alpha(u)-c|\to 0 \quad \text{w.p.1},
\]
which implies for every fixed $\varepsilon>0$,
\begin{equation}\label{eq:threshold-vanish}
\Pr\!\left(\sup_{u\in \cK}|T_N^\alpha(u)-c|>\varepsilon\right)\to 0.
\end{equation}
By Lemma \ref{lemma:BE-uniform}, for any fixed $t\in\mathbb R$,
\[
\sup_{x\in \cK}\left|\Pr(S_N(x)\ge t)-(1-\Phi(t))\right|\to 0.
\]
In particular, for $t=c-\varepsilon$,
\begin{equation}\label{eq:CLT1}
    \sup_{x\in \cK}\Pr(S_N(x)\ge c-\varepsilon) \to 1-\Phi(c-\varepsilon),
\end{equation}
and for $t=c+\varepsilon$,
\begin{equation}\label{eq:CLT2}
    \inf_{x\in \cK}\Pr(S_N(x)\ge c+\varepsilon) \to 1-\Phi(c+\varepsilon). 
\end{equation}

\paragraph{Step 4: Conclude the uniform convergence.}
Taking $\limsup_{N\to\infty}$ in \eqref{eq:upper-sup} and $\liminf_{N\to\infty}$ in \eqref{eq:lower-inf},
and using \eqref{eq:threshold-vanish}--\eqref{eq:CLT2}, we obtain
\[
\limsup_{N\to\infty}\sup_{x\in \cK}\Pr(S_N(x)\ge T_N^\alpha(x)) \le 1-\Phi(c-\varepsilon),
\]
\[
\liminf_{N\to\infty}\inf_{x\in \cK}\Pr(S_N(x)\ge T_N^\alpha(x)) \ge 1-\Phi(c+\varepsilon).
\]
Therefore, for every $\varepsilon>0$,
\begin{align*}
\limsup_{N\to\infty}\sup_{x\in \cK}\Big|\Pr(S_N(x)\ge T_N^\alpha(x))-(1-\Phi(c))\Big|
\le \max\Big\{\Phi(c)-\Phi(c-\varepsilon),\ \Phi(c+\varepsilon)-\Phi(c)\Big\}.
\end{align*}
Letting $\varepsilon\downarrow 0$ and using the continuity of $\Phi$ yields
\[
\lim_{N\to\infty}\sup_{x\in \cK}\left|\Pr(S_N(x)\ge T_N^\alpha(x))-(1-\Phi(c))\right|=0.
\]
Since $1-\Phi(c)=1-\alpha$, this proves \eqref{eq:eq16-target}, and hence Equation \eqref{eq:uniform-Efron-Accuracy}.
\qed

}

    \subsection{Proof of Theorem \ref{thm:ConsistentAPUB-SP}}
    To prove Theorem \ref{thm:ConsistentAPUB-SP}, we first prove the following two lemmas: the first lemma show the continuity of $\mu(x)$ and $\bU_{\text{\tiny APUB}}^\alpha [ \mu(x) |\widehat \dP_N ]$; the second lemma shows the uniform consistency of $\bU_{\text{\tiny APUB}}^\alpha [ \mu(x) |\widehat \dP_N ]$ on $\cK$.

        \begin{lemma}\label{lemma:mean-func-continuity}
		Suppose Assumption \ref{asp:ContinuousConvex} holds. Then $\mu(x)$ is continuous on $\cN$, and $\bU_{\text{\tiny APUB}}^\alpha [ \mu(x) |\widehat \dP_N ]$ is a continuous convex function on $\cN$ w.p.1.
	\end{lemma}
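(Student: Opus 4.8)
The plan is to derive both continuity claims from convexity, invoking throughout the standard fact that a convex function that is finite on an open convex set is continuous (indeed locally Lipschitz) there. I would first dispatch the claim about $\mu(x)$. Fixing $x_1,x_2\in\cN$ and $\lambda\in[0,1]$, Assumption \ref{asp:ContinuousConvex:A2-1} gives the pointwise bound $F(\lambda x_1+(1-\lambda)x_2,\xi)\le \lambda F(x_1,\xi)+(1-\lambda)F(x_2,\xi)$ for every $\xi\in\Xi$; taking expectations and using \ref{asp:ContinuousConvex:A2-3} to guarantee each term is finite shows that $\mu$ is convex on $\cN$. Since $\cN$ is open and convex and $\mu$ is finite-valued there, $\mu$ is continuous on $\cN$.

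For the APUB term I would start from the variational representation in Proposition \ref{prop:APUB-DEF-OPT}, writing
\[
\bU_{\text{\tiny APUB}}^\alpha[\mu(x)|\widehat\dP_N]=\inf_{t\in\bR}\Phi_N(x,t),\quad\text{where}\quad \Phi_N(x,t):=t+\frac{1}{\alpha}\bE\left[\left[\frac{1}{N}\sum_{n=1}^N F(x,\zeta_n)-t\right]_+\;\Big|\;\widehat\dP_N\right].
\]
The key step is to verify that $\Phi_N$ is jointly convex in $(x,t)$ on $\cN\times\bR$ w.p.1. For any fixed bootstrap realization, $x\mapsto\frac1N\sum_n F(x,\zeta_n)$ is a nonnegative combination of the convex functions $F(\cdot,\zeta_n)$, hence convex; subtracting $t$ keeps the map jointly convex (affine in $t$); and since $(\cdot)_+$ is convex and nondecreasing, its composition with this jointly convex map remains jointly convex. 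As $\bE[\cdot\,|\,\widehat\dP_N]$ is, for fixed $\widehat\dP_N$, a finite nonnegatively weighted sum over the bootstrap scenarios, it preserves convexity, and adding $t$ leaves $\Phi_N$ jointly convex. Partial minimization of a jointly convex function over one block of variables yields a convex function of the remaining block, so $x\mapsto\bU_{\text{\tiny APUB}}^\alpha[\mu(x)|\widehat\dP_N]$ is convex on $\cN$.

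It remains to establish finiteness so that continuity follows. Because each $\xi_n$ is a fixed point of $\Xi$ and $F$ is real-valued, every bootstrap average lies in $[\min_n F(x,\xi_n),\max_n F(x,\xi_n)]$, so $\Phi_N(x,0)<\infty$ and hence $\bU_{\text{\tiny APUB}}^\alpha[\mu(x)|\widehat\dP_N]<\infty$ for each $x\in\cN$; Remark \ref{rem:APUB-DEF-OPT} supplies the matching lower bound $\bU_{\text{\tiny APUB}}^\alpha[\mu(x)|\widehat\dP_N]\ge\widehat\mu_N(x)>-\infty$. Thus the bound is a finite convex function on the open convex set $\cN$ and is therefore continuous there, $\wpo$. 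The main obstacle I anticipate is the joint-convexity bookkeeping in the composition step—making the ``convex nondecreasing composed with convex stays convex'' argument airtight jointly in $(x,t)$ rather than coordinatewise—and confirming that partial minimization preserves convexity without requiring attainment of the infimum, for which properness (guaranteed by the lower bound above) is exactly what is needed.
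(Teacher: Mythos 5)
Your proof is correct and follows essentially the same route as the paper: convexity of $\mu$ from pointwise convexity of $F(\cdot,\xi)$ plus finiteness under \ref{asp:ContinuousConvex:A2-3}, and convexity/continuity of $\bU_{\text{\tiny APUB}}^\alpha[\mu(x)|\widehat\dP_N]$ via the variational representation in Proposition \ref{prop:APUB-DEF-OPT}, with continuity in both cases coming from finiteness of a convex function on the open convex set $\cN$. The paper compresses the second half into ``it is easy to see''; your joint-convexity, partial-minimization, and properness bookkeeping is precisely the detail that claim suppresses.
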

    
        \begin{proof}
            Since $F(x,\xi)$ is convex on $\cN$, $\mu(x)$ is also convex on $\cN$. Hence, $\mu(x)$ is continuous. Under Assumption \ref{asp:ContinuousConvex},  \( F( \cdot,  \xi)\) is continuous and convex on $\cN$. It is easy to see that, by Proposition \ref{prop:APUB-DEF-OPT}, $\bU_{\text{\tiny APUB}}^\alpha [ \mu(x) |\widehat \dP_N ]$ is continuous convex on $\cN$.
	\end{proof}

    \begin{lemma} \label{thm: uniformCVaR}
        Suppose Assumption \ref{asp:compact} and \ref{asp:ContinuousConvex} holds. Then, we have as $N \to \infty$,
        \begin{equation*}
            \normalfont \sup_{x\in \cK} \Big|   \bU_{\text{\tiny APUB}}^\alpha [ \mu(x) |\widehat \dP_N ] -   \mu(x) \Big| \to 0,\ \wpo. 
        \end{equation*}
    \end{lemma}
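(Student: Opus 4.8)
The plan is to derive the uniform convergence by combining the pointwise almost-sure consistency already established in Theorem~\ref{thm:APUB-Consistent} with the convexity supplied by Lemma~\ref{lemma:mean-func-continuity}. The key principle is the classical fact that a sequence of finite convex functions on an open convex set that converges pointwise on a dense subset must converge uniformly on every compact subset; convexity is what lets us promote pointwise a.s.\ behavior to uniform a.s.\ behavior.

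First I would fix an arbitrary $x \in \cN$ and note that $F(x,\cdot)$ is a measurable cost function whose population mean $\mu(x)$ and variance $\sigma^2(x)$ are both finite by Assumption~\ref{asp:ContinuousConvex}. Consequently Theorem~\ref{thm:APUB-Consistent} applies directly to $F(x,\cdot)$ and yields, for this fixed $x$,
\[
\bU_{\text{\tiny APUB}}^\alpha[\mu(x)|\widehat\dP_N] \to \mu(x), \quad \wpo.
\]
Next I would pick a countable dense subset $\cD \subseteq \cN$, which exists since $\cN \subseteq \bR^{d_x}$ is separable. For each $x \in \cD$ the convergence above fails only on a null event $\cE_x$; because $\cD$ is countable, the union $\bigcup_{x\in\cD}\cE_x$ remains null, so there is a single probability-one event $\Omega_0$ on which
\[
\bU_{\text{\tiny APUB}}^\alpha[\mu(x)|\widehat\dP_N] \to \mu(x) \quad \text{simultaneously for every } x \in \cD.
\]

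I would then fix a sample point in $\Omega_0$ and argue deterministically. By Lemma~\ref{lemma:mean-func-continuity}, each map $x \mapsto \bU_{\text{\tiny APUB}}^\alpha[\mu(x)|\widehat\dP_N]$ is a finite convex function on the open convex set $\cN$, and the candidate limit $\mu(\cdot)$ is finite and convex (hence continuous) on $\cN$. Since these convex functions converge pointwise on the dense set $\cD$, the standard convergence theorem for convex functions (see, e.g., Rockafellar, \emph{Convex Analysis}, Theorem~10.8) guarantees that the limit extends to a finite convex function on all of $\cN$ and that the convergence is uniform on every compact subset of $\cN$; as this limit agrees with $\mu$ on the dense set $\cD$ and both are continuous, the limit equals $\mu$ throughout $\cN$. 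Applying this to the compact set $\cK \subseteq \cN$ gives
\[
\sup_{x\in\cK}\Big| \bU_{\text{\tiny APUB}}^\alpha[\mu(x)|\widehat\dP_N] - \mu(x)\Big| \to 0
\]
on $\Omega_0$, which is exactly the claimed almost-sure uniform convergence.

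The main obstacle is precisely the interchange from ``for each fixed $x$, convergence holds almost surely'' to ``almost surely, convergence holds uniformly in $x$'': a priori the exceptional null sets depend on $x$ and could accumulate. The countable-dense-set reduction controls this accumulation, and the convexity-driven uniform convergence theorem then does the essential work — without the convexity from Lemma~\ref{lemma:mean-func-continuity}, dense pointwise a.s.\ convergence would not by itself yield uniform convergence on $\cK$.
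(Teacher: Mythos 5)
Your proposal is correct and follows essentially the same route as the paper's own proof: pointwise almost-sure consistency from Theorem~\ref{thm:APUB-Consistent} applied on a countable dense subset of $\cN$, followed by the convexity of $\bU_{\text{\tiny APUB}}^\alpha[\mu(\cdot)|\widehat\dP_N]$ from Lemma~\ref{lemma:mean-func-continuity} and Rockafellar's Theorem~10.8 to upgrade dense pointwise convergence to uniform convergence on the compact set $\cK$, with the limit identified as $\mu$ by continuity and density. The only cosmetic difference is that the paper takes the explicit dense set $\cD = \bQ^{d_x}\cap\cN$, whereas you invoke separability abstractly; the argument is otherwise identical.
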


	\begin{proof}
        Note that the open convex set $\cN \subseteq \bR^{d_x}$. We first construct a countable dense subset of $\cN$ as  \(\cD := \bQ^{d_x} \cap \cN \), where \( \bQ^{d_x} \) represents the set of \( d_x \)-dimensional rational numbers. Choose a sample path $(\xi_1, \xi_2,\dots)$ and hence $\widehat \dP_N$ is the empirical distribution associated to the first $N$ sample points. For $x \in \cD$, we denote an event as 
        \begin{equation*}
			\Upsilon_x := \left\{ (\xi_1,\xi_2,\dots) \ : \ \lim_{N\to \infty}   \bU_{\text{\tiny APUB}}^\alpha [ \mu(x) |\widehat \dP_N ]= \mu(x) \right\}.
	\end{equation*}
        Since $\mu(x) < \infty$ and $\sigma(x) < \infty$ under Assumption \ref{asp:ContinuousConvex}, it follows by Theorem \ref{thm:APUB-Consistent} that $\Pr \big( (\xi_1,\xi_2,\dots) \in \Upsilon_x \big) = 1$, which implies that $\Pr \left((\xi_1,\xi_2,\dots)\in\bigcap_{x \in \cD} \Upsilon_x  \right) = 1$. In other words, $\bU_{\text{\tiny APUB}}^\alpha [ \mu(x) |\widehat \dP_N ]$ converges pointwisely to $\mu(x)$  on $\cD$ w.p.1. Furthermore,  
        by Proposition \ref{lemma:mean-func-continuity} and Theorem 10.8 in \cite{rockafellar_convex_2015} (Theorem \ref{lemma:ConvexAnalysis}), we can conclude that  $\bU_{\text{\tiny APUB}}^\alpha [ \mu(x) |\widehat \dP_N ]$ converges uniformly a certain continuous function $\nu$ on $\cK$ w.p.1. Since $\nu(x)$ and $\mu(x)$ coincidence on a dense subset of $\cK$ and they are both continuous on $\cK$,  we know that \( \nu(x) = \mu(x) \) for all \( x \in \cK \).   This completes the proof.  
	\end{proof}

	\noindent \normalfont\textbf{Proof of Theorem \ref{thm:ConsistentAPUB-SP}} 
        
        \textbf{i) Proof of the consistency of $\widehat{\vartheta}_N^\alpha$}.
		Choose $x^*\in \cS$ and $\widehat{x}_N\in \widehat{\cS}_N$. Then,
            \begin{equation*} 
			 \bU_{\text{\tiny APUB}}^\alpha [ \mu(\widehat{x}_N) |\widehat \dP_N ]
			\le \bU_{\text{\tiny APUB}}^\alpha [ \mu(x^*) |\widehat \dP_N ] \quad\text{ and } \quad \mu(x^*)\le \mu(\widehat x_N).
		\end{equation*}
		Thus, we have
        \begin{align*}
            | \widehat{\vartheta}_N^\alpha -  \vartheta^*  |
        &= \Big|  \bU_{\text{\tiny APUB}}^\alpha [ \mu(\widehat{x}_N) |\widehat \dP_N ] - \mu(x^*)   \Big|\\
        &= \max\Big\{ \bU_{\text{\tiny APUB}}^\alpha [ \mu(\widehat{x}_N) |\widehat \dP_N ] - \mu(x^*) , \ \mu(x^*) - \bU_{\text{\tiny APUB}}^\alpha [ \mu(\widehat{x}_N) |\widehat \dP_N ]   \Big\},\\
        & \le \max\Big\{ \bU_{\text{\tiny APUB}}^\alpha [ \mu(x^*) |\widehat \dP_N] - \mu(x^*) , \ \mu(\widehat{x}_N) - \bU_{\text{\tiny APUB}}^\alpha [ \mu(\widehat{x}_N) |\widehat \dP_N ] \Big\}\\
        &\le \sup_{x\in \cK} \Big|  \bU_{\text{\tiny APUB}}^\alpha [ \mu(x) |\widehat \dP_N ] - \mu(x)   \Big|,
        \end{align*}
		which converges to 0 $\wpo$ by Theorem \ref{thm: uniformCVaR}. This completes the proof.

	\textbf{ii) Proof of the consistency of $\widehat \cS_N^\alpha$}. Let $\cO$ as a collection of sample paths along which \( \widehat{\cS}_N^\alpha \subseteq \cK \) for a sufficiently large $N$ and \( \widehat{\vartheta}_N^\alpha \to  \vartheta^* \). By the above proof and Assumption \ref{asp:compact}, we have $\Pr \big((\xi_1,\xi_2,\dots)  \in \cO \big) = 1$. We now choose \( (\xi_1,\xi_2,\dots)  \in \cO \). Thus $\widehat \cS_N^\alpha$ is the optimal solution set of \ref{mod:APUB-SP} using the first $N$ sample points. 

    Suppose by contradiction that \( \bD(\widehat \cS_N^\alpha,\cS) \not\to 0 \) along the sample path $(\xi_1,\xi_2,\dots)$. Then, there exists \( \varepsilon > 0 \) such that for all \( M \in \bN \), there exists some \( N > M \) for which \( \bD(\widehat \cS_N^\alpha,\cS) > \varepsilon \). Specifically, there exists \( \widehat{x}_N \in \widehat \cS_N^\alpha \) such that 
    $\inf_{y \in \cS} \| \widehat{x}_N, y\| > \varepsilon$.
	Because of the compactness of \( \cK \), we can find a subsequence \( \widehat{x}_{N_k} \in \widehat \cS_{N_k}^\alpha \) such that \( \widehat{x}_{N_k} \subseteq \cK \) for all \( k \in \bN \), and
		\(
		\lim_{{k \to \infty}} \widehat{x}_{N_k} = \widehat{x} \in \cK,  \inf_{y \in \cS} \| \widehat{x}_N, y\| > \varepsilon,  \text{for all } k.
		\)
		It follows that $\widehat{x} \not\in \cS$ and hence $\mu({\widehat x})> \vartheta^*$. On the other hand, we have
		\begin{equation*}
			\bigg|    \bU_{\text{\tiny APUB}}^\alpha [ \mu(\widehat{x}_{N_k}) |\widehat \dP_N ]  -   \mu({\widehat{x}} ) \bigg| 
			\le\bigg|    \bU_{\text{\tiny APUB}}^\alpha [ \mu(\widehat{x}_{N_k}) |\widehat \dP_N ] 
                - \mu({\widehat{x}}_{N_k} )\bigg| + \bigg|\mu({\widehat{x}}_{N_k} )
                -   \mu({\widehat{x}} ) \bigg|. 
		\end{equation*}
  On the right hand of the above inequality, the first term converges to zero by Theorem \ref{thm: uniformCVaR}, and the second term converges to zero because of the continuity of $\mu(x)$. Thus, 
		\begin{equation*}
			\lim_{k \to \infty} \bU_{\text{\tiny APUB}}^\alpha [ \mu(\widehat{x}_{N_k}) |\widehat \dP_N ] =  \mu({\widehat{x}} ). 
		\end{equation*}
		The definition of $\cO$ ensures that $\bU_{\text{\tiny APUB}}^\alpha [ \mu(\widehat{x}_{N_k}) |\widehat \dP_N ] = \widehat{\vartheta}_N^\alpha \to  \vartheta^*$. It implies that $\mu({\widehat{x}} )= \vartheta^*$. This is contradictory to the assertion that \( \bD(\widehat \cS_N^\alpha,\cS) \not\to 0 \).

\subsection{Proof of Theorem \ref{thm:algo-convergence-2}}
\label{subsec:proof-theo2}

\paragraph{Key observation:} Consider a combination without replacement which firstly selects an element in $\{1, \dots, M\}$ and next pick out other $M-J$ different elements from the remains. We depict this combination as a set $\{(i_1), (i_2, \dots, i_{M-J+1})\}$, where each $i_j$ is unique and belongs to $\{1, \dots, M\}$. In this way, we can form $\binom{M}{1}\binom{M-1}{M-J}$ different sets, denoted as $\Gamma_\ell$ for $\ell = 1, \dots, \binom{M}{1}\binom{M-1}{M-J}$. For $\Gamma_\ell = \{(i_1), (i_2, \dots, i_{M-J+1})\}$, we write $\overline \Gamma_\ell =  i_1 $ and $\underline \Gamma_\ell = \{ i_2, \dots, i_{M-J+1} \}$.  The weighted average for $\Gamma_\ell$ is represented as   
     \begin{equation}\label{eq:tau_ell}
         \tau_\ell(x) := \left( 1 - \frac{M-J}{\alpha M} \right) r_{\overline \Gamma_\ell} (x) + \frac{1}{\alpha M} \sum_{j \in \underline \Gamma_\ell} r_j (x).
      \end{equation} 
Then, as the weighted average of the extreme losses, $\gamma (x)$ equals to the maximum of $\tau_\ell(x)$ for all combinations $\ell \in \left\{ 1, \dots, \binom{M}{1}\binom{M-1}{M-J}\right\}$, i.e.,
    \begin{align}
         \gamma (x)  =  \max_{\ell \in \left\{1, \dots, \binom{M}{1}\binom{M-1}{M-J} \right\}} \tau_\ell(x). \label{alg:vartheta-2}
    \end{align}

\paragraph{Reformulation of problem \eqref{eq:cut_EF_2S}}
Define the two sets \[\cK_1 := \left\{  x: Ax = b, x\ge0   \right\}\] and \[\cK_2 : = \left\{x: \text{there exists some }y\ge 0, \text{ s.t.} W_ny = h_n - T_nx, \text{ for all } n =1,\dots,N\right\}.\]Their intersection, $\cK := \cK_1\cap \cK_2$, consists of all the first-stage solutions of problem \eqref{mod:first-bs}, with which the second-stage is feasible. Notice that Algorithm \ref{alg:L-shaped} allows $\cK_2 = \varnothing$. By the relations described in \eqref{eq:varphi} and \eqref{alg:vartheta-2}, we can reformulate problem \eqref{mod:first-bs} as 
\begin{equation}\label{eq:cut_EF_2S}
      \min \left\{ c^\intercal x + \eta  : \quad x\in\cK, \text{ and }
      \eta\ge\tau_\ell(x) \text{ for all } \ell\in\fL \right\},
\end{equation}
where $\fL := \left\{1, \dots, \binom{M}{1}\binom{M-1}{M-J} \right\}$ and $\tau_\ell(\cdot)$ is defined in \eqref{eq:tau_ell}.

Let $\cE_n$ be the set of all extreme points of $\{\pi_n: \pi_n^\intercal W_n \le q_n \}$ for $n = 1, \dots, N$, and let their Cartesian product be $\cE := \prod_{n=1}^N \cE_n$. It follows by the property of strong duality that $Q(x, \xi_n) = \max_{\pi_n \in \cE_n} \{\pi_n^\intercal (h_n - T_n x)\}$, and then we rewrite equation \eqref{eq:tau_ell} as
\begin{align*}
 & \tau_\ell(x)  \\
    = & \left( 1 - \frac{M-J}{\alpha M} \right) \frac{1}{N} \sum_{n=1}^N V_{\overline \Gamma_\ell, n} Q(x, \xi_n)
    + \frac{1}{\alpha M N} \sum_{j \in \underline \Gamma_\ell}  \frac{1}{N} \sum_{n=1}^N V_{j,n} Q(x, \xi_n)\\
    = & \max_{(\pi_1, \dots, \pi_N) \in \cE } \Bigg\{ \left( 1 - \frac{M-J}{\alpha M} \right) \frac{1}{N} \sum_{n=1}^N V_{\overline \Gamma_\ell, n}  \pi_n^\intercal (h_n - T_n x) \\
    & \qquad \qquad + \frac{1}{\alpha M N} \sum_{j \in \underline \Gamma_\ell}  \sum_{n=1}^N V_{j,n}  \pi_n^\intercal (h_n - T_n x) \Bigg\}\\
    = & \max_{\pi \in \cE} \lambda_{\ell,\pi}(x),
\end{align*}
where
\[
\lambda_{\ell,\pi}(x) : =\left( 1 - \frac{M-J}{\alpha M} \right) \frac{1}{N} \sum_{n=1}^N V_{\overline \Gamma_\ell, n} \pi_n^\intercal (h_n - T_n x) + \frac{1}{\alpha M N} \sum_{j \in \underline \Gamma_\ell}  \sum_{n=1}^N V_{j,n} \pi_n^\intercal (h_n - T_n x),
\]
for $\pi = (\pi_1, \dots, \pi_N) \in \cE$. We can then reformulate problem \eqref{eq:cut_EF_2S} as
\begin{equation}\label{eq:cut_EF_2S_Full}
      \min_{x,\eta} \ \{ c^\intercal x + \eta : \quad x \in \cK, \text{ and }
      \eta\ge\lambda_{\ell,\pi}(x) \text{ for all } \ell \in \fL, \pi \in \cE\}.
\end{equation}

Next, we prove that Algorithm \ref{alg:L-shaped} terminates after generating finitely many feasibility cuts \eqref{alg:master_feasible_cuts} and optimality cuts \eqref{alg:master_optimal_cuts}, and upon termination, it will either yield an optimal solution to \eqref{eq:cut_EF_2S_Full} or show $\cK = \varnothing$.

\textbf{i) Prove that there are finitely many feasibility cuts}.
We first prove that at most a finite number of constraints \eqref{alg:master_feasible_cuts} is needed to guarantee $\hat x \in \cK_2$. This implies $\hat{x} \in \cK$, since  constraints \eqref{alg:master_Ax=b} and \eqref{alg:master_x>=0} guarantee $\hat{x} \in \cK_1$. By the definition of $u_n(\cdot)$ (see \eqref{alg:feasibility}), we know that $\hat x \in \cK_2$ if and only if $u_n(\hat x) = 0$. It follows by duality that $u_n(x) = \max \  \left\{ \varphi_n^\intercal(h_n - T_n x): \ \varphi_n \in \Phi_n
    \right\}$, for $n = 1, \dots, N$, where  
\begin{align*}
 \Phi_n := \left\{ \varphi_n : \ \left[ \begin{array}{ccc}
         W_n^\intercal & & \\
         & I & \\
         & & I 
    \end{array} \right] \varphi_n  \le 
    \left[ \begin{array}{c}  0 \\ \mathbf{1} \\ \mathbf{1} \end{array} \right]  \right\}
\end{align*}
In the algorithm, if $u_n(\hat x) > 0$ for some $n$, Step 3 generates a feasibility cut, $\phi_n^\intercal\bigl(h_n - T_n x\bigr)\le 0$. Recall that $\phi_n$ be the simplex multipliers associated with $\hat x$. Hence, $\phi_n$ is an extreme point of the polyhedron $\Phi_n$. The master problem \eqref{alg:master}, with this new cut added, rules out the current infeasible point in the next iteration. Hence, every generated cut is unique, and their total number is bounded by the sum of the extreme points of all $\Phi_n$ for $n = 1, \dots, N$. Consequently, $\hat{x} \in \cK_2$ is guaranteed after a finite number of constraints \eqref{alg:master_feasible_cuts} are generated. Specially, for the case that $\cK_2 = \varnothing$, the algorithm will verify that, for any $x \in \cK_1$, $u_n(x) > 0$ for some $n$ after finitely many iterations and then it terminates with $\cK = \varnothing$.      

\textbf{ii) Prove that there are finitely many optimality cuts}.
Suppose the algorithm cannot terminate in the $k$-th iteration. Consider $\hat{x}\in \cK \ne \varnothing$. Step 4 produces an optimality cut represented by a pair $(E_{k+1},e_{k+1})$, i.e., $\hat \eta < \hat \lambda = e_{k+1} - E_{k+1} \hat x$. It means that $(E_{k+1},e_{k+1})$ distinguishes from $(E_{1},e_{1}), \dots, (E_{k},e_{k})$. In other words, this algorithm cannot generate the same cut more than once. Notably, every cut is   $\eta \ge \lambda_{\ell, \pi} (x)$ for some $\ell \in \fL$ and $\pi \in \cE$. Therefore, Step 4 can generate $|\cE| \times |\fL|$ cuts at most. When $\hat{\eta} \ge \hat{\lambda} = \max \{ \lambda_{\ell, \pi} (\hat x): \ \ell \in \fL, \pi \in \cE\}$, the algorithm terminates and $\hat x $ is an optimal solution to problem \eqref{eq:cut_EF_2S_Full}.



\section{Theorems Used in the Paper}

\begin{theorem}[Theorem 1, \cite{rockafellar_optimization_2000}] \label{thm:CVaR-defn}
    Let $h(x,\omega)$ be a random function where $x\in \cX$ and $\omega$ belong to an arbitrary probability space with distribution $\dQ$. Let $q_\alpha(x)$ denote the $100(1-\alpha)$-percentile of $h(x,\omega)$ and 
    \(
        H_\alpha(x,t) = t + \frac{1}{\alpha}\int [h(x,\omega)-t]_+ \dQ(d\omega),
    \)
    where $t \in \bR$.
    Then, for all $x\in \cX$, we have 
    \(
        \frac{1}{\alpha}\int_{0}^\alpha q_\tau(x)d\tau = \min_{t\in \bR} H_\alpha(x,t).
    \)
\end{theorem}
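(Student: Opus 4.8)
The plan is to fix an arbitrary $x \in \cX$ and reduce the identity to a one-dimensional statement about the scalar random variable $Z := h(x,\omega)$ under $\dQ$. Let $F$ denote its cumulative distribution function and $Q(u) := \inf\{z : F(z) \ge u\}$ its left-continuous quantile function; with this notation the $100(1-\tau)$-percentile satisfies $q_\tau(x) = Q(1-\tau)$, and the objective reads $H_\alpha(x,t) = t + \tfrac{1}{\alpha}\bE[(Z-t)_+]$. I would establish the claim in three steps: (i) $H_\alpha(x,\cdot)$ is a finite convex function of $t$ attaining its minimum; (ii) a minimizer is $t^\star = q_\alpha(x) = Q(1-\alpha)$; and (iii) substituting $t^\star$ reproduces $\tfrac{1}{\alpha}\int_0^\alpha q_\tau(x)\,d\tau$.

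For steps (i)--(ii), I would first observe that for each fixed $\omega$ the map $t \mapsto t + \tfrac{1}{\alpha}(Z-t)_+$ is convex and piecewise linear, decreasing with slope $1-\tfrac{1}{\alpha}<0$ on $\{t<Z\}$ and increasing with slope $1$ on $\{t>Z\}$; taking expectations preserves convexity, and coercivity ($H_\alpha(x,t)\to+\infty$ as $|t|\to\infty$) guarantees a minimizer. Differentiating under the integral --- legitimate because $t\mapsto(z-t)_+$ is $1$-Lipschitz, so dominated convergence applies --- yields the right and left derivatives $\partial_t^+ H_\alpha(x,t) = 1 - \tfrac{1}{\alpha}\Pr(Z>t)$ and $\partial_t^- H_\alpha(x,t) = 1 - \tfrac{1}{\alpha}\Pr(Z\ge t)$. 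By convex optimality, $t$ minimizes $H_\alpha(x,\cdot)$ exactly when $0$ lies between these one-sided derivatives, i.e. when $\Pr(Z>t)\le\alpha\le\Pr(Z\ge t)$, which is precisely the set of $(1-\alpha)$-quantiles of $Z$ and contains $t^\star = q_\alpha(x)$.

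For step (iii), I would evaluate $H_\alpha(x,t^\star) = q_\alpha(x) + \tfrac{1}{\alpha}\bE[(Z-q_\alpha(x))_+]$ using the quantile-transform identity $\bE[(Z-c)_+] = \int_0^1 (Q(u)-c)_+\,du$. Writing $\beta := 1-\alpha$ and $c = Q(\beta)$, monotonicity of $Q$ gives $(Q(u)-Q(\beta))_+ = Q(u)-Q(\beta)$ for $u>\beta$ and $0$ for $u<\beta$, whence $\bE[(Z-Q(\beta))_+] = \int_\beta^1 Q(u)\,du - \alpha Q(\beta)$. Substituting back and applying the change of variables $u=1-\tau$ gives $H_\alpha(x,t^\star) = \tfrac{1}{\alpha}\int_\beta^1 Q(u)\,du = \tfrac{1}{\alpha}\int_0^\alpha Q(1-\tau)\,d\tau = \tfrac{1}{\alpha}\int_0^\alpha q_\tau(x)\,d\tau$, which is the asserted equality.

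The main obstacle is handling distributions with atoms: when $F$ jumps at $q_\alpha(x)$ the naive stationarity condition $\Pr(Z>t)=\alpha$ may have no solution, and the $(1-\alpha)$-quantile need not be unique. The subdifferential formulation in step (ii), using both one-sided derivatives, is exactly what accommodates this, while the quantile-transform identity in step (iii) makes the final evaluation insensitive to any non-uniqueness of $t^\star$. I would also confirm the standing finiteness of $\bE[(Z-t)_+]$ (equivalently $\bE[Z_+]<\infty$) so that $H_\alpha$ is real-valued throughout and the coercivity argument is valid.
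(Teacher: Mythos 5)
Your proof is correct. Note, however, that the paper never proves this statement: Theorem \ref{thm:CVaR-defn} sits in the appendix of imported results and is quoted verbatim as Theorem 1 of \citet{rockafellar_optimization_2000}, so there is no internal proof to compare yours against. Relative to the cited source, your argument is actually slightly more general: Rockafellar and Uryasev prove their Theorem 1 under a continuity assumption on the loss distribution (they work with an everywhere-defined derivative of $H_\alpha(x,\cdot)$ and define CVaR as a tail conditional expectation), and the identity for the average-of-percentiles form $\frac{1}{\alpha}\int_0^\alpha q_\tau(x)\,d\tau$ under arbitrary distributions is the subject of their later work on general loss distributions. Your route --- convexity of $H_\alpha(x,\cdot)$, the one-sided derivatives $1-\frac{1}{\alpha}\Pr(Z>t)$ and $1-\frac{1}{\alpha}\Pr(Z\ge t)$ characterizing the minimizer set as exactly the $(1-\alpha)$-quantile interval containing $q_\alpha(x)$, and the quantile-transform identity $\bE[(Z-c)_+]=\int_0^1\bigl(Q(u)-c\bigr)_+\,du$ to evaluate the minimum --- is precisely the atom-robust version of the argument, and it is the version this paper actually needs, since \ref{APUB} is built on bootstrap distributions with finite support, whose cdfs are step functions. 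Your explicit handling of non-uniqueness of the quantile is what makes the evaluation in step (iii) well defined in that setting.

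Two small caveats. Your coercivity step uses the slope $1-\frac{1}{\alpha}<0$, which presumes $\alpha<1$; at $\alpha=1$ (a value the paper does use, e.g.\ in Remark \ref{rem:APUB-DEF-OPT}) the map $H_1(x,\cdot)$ is nonincreasing, its infimum is $\bE[Z]$, and it need not be attained when $Z$ is unbounded below, so the theorem's $\min$ should be read as $\inf$ in that boundary case --- harmless for finitely supported empirical distributions, but worth flagging. Second, your standing condition $\bE[Z_+]<\infty$ is exactly what makes the left-hand side $\frac{1}{\alpha}\int_0^\alpha q_\tau(x)\,d\tau$ finite as well, via the same quantile computation, so stating it once at the outset covers both sides of the identity.
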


\begin{theorem}[Theorem 2, \cite{athreya_strong_1983}]\label{lemma:Boot-LLN}
     Suppose \( \liminf MN^{-\phi} > 0 \) for some \( \phi > 0 \) as \( M,N \to \infty \), and \( \bE_{\dP}|F(\xi) - \mu|^{\theta} < \infty \) for some \( \theta \geq 1 \) such that \( \theta\phi > 1 \). Then, as \( M,N \to \infty \), we have
    \(
        \frac{1}{M}\sum_{m=1}^M F\left(\zeta_m({\widehat \dP_N})\right) \to 1 \quad \wpo.
   \)
\end{theorem}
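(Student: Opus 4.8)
The statement is the bootstrap strong law of \cite{athreya_strong_1983}; the intended right-hand limit is the population mean $\mu$. I would work on the joint probability space carrying the infinite data sequence $(\xi_1,\xi_2,\dots)$ together with the resampling randomness, so that, conditionally on $\widehat\dP_N$, the draws $\zeta_1(\widehat\dP_N),\dots,\zeta_M(\widehat\dP_N)$ are i.i.d.\ $\widehat\dP_N$. The first step is a centering decomposition,
\[
\frac{1}{M}\sum_{m=1}^M F(\zeta_m(\widehat\dP_N)) - \mu
= \bigl(\widehat\mu_N-\mu\bigr)
+ \frac{1}{M}\sum_{m=1}^M \bigl(F(\zeta_m(\widehat\dP_N))-\widehat\mu_N\bigr),
\]
where $\widehat\mu_N=\tfrac1N\sum_{n=1}^N F(\xi_n)=\bE[F(\zeta)\mid\widehat\dP_N]$. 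The first term tends to $0$ w.p.1 by Kolmogorov's SLLN, since $\bE_{\dP}|F(\xi)-\mu|^\theta<\infty$ with $\theta\ge1$ forces $\bE_{\dP}|F(\xi)|<\infty$. Everything therefore reduces to the bootstrap fluctuation $W_{M,N}:=\tfrac1M\sum_{m=1}^M Y_m$ with $Y_m:=F(\zeta_m(\widehat\dP_N))-\widehat\mu_N$, which I must show converges to $0$ w.p.1 as $M,N\to\infty$ under $\liminf MN^{-\phi}>0$.

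Conditionally on $\widehat\dP_N$, the $Y_m$ are i.i.d.\ and mean-zero, with conditional $\theta$-th moment $\bE[|Y_1|^\theta\mid\widehat\dP_N]=\tfrac1N\sum_{n=1}^N|F(\xi_n)-\widehat\mu_N|^\theta$. By the SLLN this empirical moment converges w.p.1 to $\bE_{\dP}|F(\xi)-\mu|^\theta<\infty$, and likewise $\max_{n\le N}|F(\xi_n)|=o(N^{1/\theta})$ w.p.1 (a Borel--Cantelli consequence of $\sum_N\Pr(|F(\xi)|^\theta\ge\varepsilon^\theta N)\le\varepsilon^{-\theta}\bE_{\dP}|F(\xi)|^\theta<\infty$). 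Thus along almost every data path the conditional law of $Y_1$ has a uniformly-in-$N$ controlled $\theta$-th moment. I would then run a Marcinkiewicz--Zygmund/Baum--Katz truncation \emph{conditionally}: truncate the $Y_m$ at the Marcinkiewicz--Zygmund level, split $W_{M,N}$ into a truncated centered part and a tail part, control the truncated part by a H\'ajek--R\'enyi/Kolmogorov maximal inequality over the range $M\ge cN^{\phi}$, and control the tail part through the convergence of the series $\sum_m\Pr(|Y_1|>\text{level}\mid\widehat\dP_N)$, which is finite because of the bounded conditional $\theta$-th moment.

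The final step is Borel--Cantelli along the data index $N$: one shows $\sum_N\Pr\bigl(\sup_{M\ge cN^{\phi}}|W_{M,N}|>\varepsilon\bigr)<\infty$, whence $W_{M,N}\to0$ w.p.1 along the joint limit. The delicate point, and the main obstacle, is that the whole construction is a genuine double array: the conditional law of the summands, and even its finite support, changes with $N$, so no fixed-distribution SLLN applies, and the Borel--Cantelli rate must be matched to the sharp hypothesis $\theta\phi>1$. A crude second-moment or von Bahr--Esseen bound on $W_{M,N}$ only yields the strictly stronger requirement $\phi(\theta-1)>1$, which fails exactly at the boundary cases of interest; for instance the application in Lemma~\ref{lemma:BootLLN}, with $\phi=1$, $\theta=2$, $M=N$, satisfies $\theta\phi=2>1$ but only $\phi(\theta-1)=1$. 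Recovering the sharp exponent is precisely what the Baum--Katz-type bookkeeping accomplishes by balancing the truncation level against both the bootstrap sample size $M$ and the $N$-dependent empirical support, and this is where the bulk of the technical effort lies.
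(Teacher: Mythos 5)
The first thing to note is that the paper contains no proof of this statement: it sits in the appendix of quoted background results as Theorem~2 of \cite{athreya_strong_1983}, and its only role is to feed the proof of Lemma~\ref{lemma:BootLLN}. So there is no in-paper argument to compare against; the relevant benchmark is Athreya's original proof, and your outline is essentially a faithful reconstruction of it. You correctly repair the statement's typo (the limit is $\mu$, not $1$ --- the stray ``$\to 1$'' is repeated where the paper invokes the result), your centering through $\widehat\mu_N$ is the right first move, and your diagnosis of the difficulty is exactly on target: a one-shot conditional von Bahr--Esseen/Chebyshev bound gives a tail probability of order $M^{1-\theta}\sim N^{-\phi(\theta-1)}$, summable only under $\phi(\theta-1)>1$, which fails precisely at the paper's application ($\theta=2$, $\phi=1$, $M=N$); recovering the sharp hypothesis $\theta\phi>1$ via a conditional Marcinkiewicz--Zygmund/Baum--Katz truncation, a maximal inequality over $M\ge cN^{\phi}$, and Borel--Cantelli along the data index is indeed how Athreya proceeds, and your device of conditioning on the full data sequence so that the resampling draws are genuinely i.i.d.\ is the correct way to make the conditional Borel--Cantelli step legitimate.

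Two caveats. First, as written this is a plan rather than a proof: the truncation bookkeeping you flag as ``the bulk of the technical effort'' is the actual content of the theorem, and you do not execute it; the gap is honestly acknowledged, but it is a gap if the task is a complete proof. Second, for the only case the paper uses ($\theta=2$, $M=N$) you could have closed the argument completely with an elementary fourth-moment shortcut: conditionally on the data, $\bE\bigl[W_{N,N}^4 \,\big|\, \widehat\dP_N\bigr] \le \widehat\mu_4 N^{-3} + 3\,\widehat\sigma_N^4 N^{-2}$, where $\widehat\mu_4$ is the empirical fourth central moment; your own Borel--Cantelli observation $\max_{n\le N}|F(\xi_n)| = o(N^{1/2})$ w.p.1 gives $\widehat\mu_4 = o(N)$ w.p.1 while $\widehat\sigma_N^2 \to \sigma^2$, so the conditional tail probabilities are $O(N^{-2})$ along almost every data path, summable, and the conditional Borel--Cantelli lemma finishes. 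That route does not prove Athreya's theorem in its stated generality (it needs $\theta=2$ and would not reach the boundary cases of $\theta\phi>1$ for $\theta<2$), but it fully proves everything Lemma~\ref{lemma:BootLLN} actually requires, with none of the Baum--Katz machinery.
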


\begin{theorem}[Lemma 21.2, \cite{vaart1998}]\label{lemma:quantile-function}
    The quantile function of a cumulative distribution function $\cF$ is the generalized inverse $\cF^{-1}:(0,1)\to\bR$ given by 
    \(
        \cF^{-1}(p) = \inf\{x:\cF(x) \le p\}.
    \)
    For any any sequence of cumulative distribution functions, $\cF_N$ converges to $\cF$ in distribution if and only if $\cF_N^{-1}$ converges to $\cF^{-1}$  in distribution.
\end{theorem}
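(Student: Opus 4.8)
The plan is to reduce everything to the elementary ``switching relation'' (a Galois connection) satisfied by the generalized inverse, and then to invoke the defining pointwise-at-continuity-points meaning of convergence in distribution. Throughout I read the quantile function with the standard convention $\cF^{-1}(p) = \inf\{x : \cF(x) \ge p\}$ (the inequality in the displayed definition should be $\ge$ rather than $\le$), and I interpret ``$\cF_N^{-1}$ converges to $\cF^{-1}$ in distribution'' as pointwise convergence $\cF_N^{-1}(p) \to \cF^{-1}(p)$ at every $p \in (0,1)$ where $\cF^{-1}$ is continuous.

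First I would establish the switching relation: for every $x \in \bR$ and $p \in (0,1)$, one has $\cF^{-1}(p) \le x \iff p \le \cF(x)$. This is immediate from monotonicity together with right-continuity of $\cF$, which guarantees that the infimum defining $\cF^{-1}(p)$ is attained so that $\cF(\cF^{-1}(p)) \ge p$. A companion fact I would record is a characterization of continuity of $\cF^{-1}$ at $p$: the quantile function $\cF^{-1}$ is continuous at $p$ if and only if $\cF(\cF^{-1}(p) + \epsilon) > p$ for every $\epsilon > 0$, i.e. $\cF$ has no flat stretch at height $p$.

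With these in hand I would prove the forward implication by a two-sided sandwich. Fix a continuity point $p$ of $\cF^{-1}$ and $\epsilon > 0$, and choose continuity points $x_\pm = \cF^{-1}(p) \pm \epsilon$ of $\cF$ (these are dense, hence available). For the lower bound, $x_- < \cF^{-1}(p)$ forces $\cF(x_-) < p$ by the switching relation; since $\cF_N(x_-) \to \cF(x_-) < p$, eventually $\cF_N(x_-) < p$, whence $\cF_N^{-1}(p) > x_-$, giving $\liminf_N \cF_N^{-1}(p) \ge \cF^{-1}(p) - \epsilon$. For the upper bound, the continuity characterization gives $\cF(x_+) > p$, so $\cF_N(x_+) \to \cF(x_+) > p$ forces $\cF_N^{-1}(p) \le x_+$ eventually, and thus $\limsup_N \cF_N^{-1}(p) \le \cF^{-1}(p) + \epsilon$. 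Letting $\epsilon \downarrow 0$ along continuity points yields $\cF_N^{-1}(p) \to \cF^{-1}(p)$.

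Finally, for the converse I would exploit the symmetry of the construction: $\cF^{-1}$ is itself nondecreasing and left-continuous, the switching relation is symmetric in the pair $(\cF, \cF^{-1})$, and taking the generalized inverse of $\cF^{-1}$ recovers $\cF$ at its continuity points. Hence the forward implication, applied with the roles of $\cF$ and $\cF^{-1}$ interchanged, delivers the reverse implication with essentially no new work (should the bookkeeping prove awkward, the same sandwich argument can instead be rerun directly). I expect the only genuinely delicate step to be the upper bound in the forward direction: it is exactly here that continuity of $\cF^{-1}$ at $p$ is indispensable, since a flat stretch of $\cF$ at level $p$ would make $\cF(\cF^{-1}(p)+\epsilon) = p$ and collapse the strict inequality that allows weak convergence to ``tip'' $\cF_N(x_+)$ above $p$. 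Making the involution in the converse precise — matching continuity points of $\cF$ with those of $\cF^{-1}$ — is the second point requiring care.
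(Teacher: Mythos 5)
The paper never proves this statement: it appears in the appendix ``Theorems Used in the Paper'' purely as an imported result, cited to Lemma 21.2 of \cite{vaart_asymptotic_1998}, so there is no internal proof to compare against — your proposal must stand on its own, and it does. It is the standard textbook argument and it is correct. You are right that the paper's displayed definition contains a typo: with $\inf\{x : \cF(x) \le p\}$ the set is a left-infinite ray and the infimum is $-\infty$; the intended convention is $\cF^{-1}(p) = \inf\{x : \cF(x) \ge p\}$, exactly as you use. The two points you flag as delicate are the right ones, and your handling of each works. For the forward direction, the switching relation $\cF^{-1}(p)\le x \iff p \le \cF(x)$ (needing right-continuity of $\cF$) and the characterization that $\cF^{-1}$ is continuous at $p$ iff $\cF(\cF^{-1}(p)+\epsilon)>p$ for all $\epsilon>0$ are both correct, and the sandwich argument is sound; the one small tightening is that $x_\pm=\cF^{-1}(p)\pm\epsilon$ need not be continuity points of $\cF$ for an arbitrary $\epsilon$, so you should restrict to the co-countable set of $\epsilon$ for which they are — which your closing phrase ``letting $\epsilon\downarrow 0$ along continuity points'' already does implicitly. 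For the converse, the involution $(\cF^{-1})^{-1}=\cF$ holds only up to the left-/right-continuity convention, since $\cF^{-1}$ is left-continuous rather than right-continuous; but the two candidate inverses differ at no more than countably many points, and convergence in distribution is determined by continuity points, so the role-swap goes through — or, as you note, one can simply rerun the sandwich directly using the dual form of the switching relation, $\cF^{-1}(p)>x \iff \cF(x)<p$. Your interpretation of ``$\cF_N^{-1}$ converges in distribution'' as pointwise convergence at continuity points of $\cF^{-1}$ is also the interpretation van der Vaart intends. In short: no gap, and this is the canonical proof of the cited lemma.
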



\begin{theorem}[Theorem 10.8 \cite{rockafellar_convex_2015}]\label{lemma:ConvexAnalysis}
    Let $\cC$ be an open convex set. Let $(g_1, g_2,\cdots)$ be a sequence of finite convex functions on $\cC$. Suppose that the sequence converges pointwise on a dense subset $\cD\subseteq\cC$ and the limit is finite. Then, the sequence $(g_1, g_2,\cdots)$ converges uniformly to a continuous function on any compact subset inside $\cC$.
\end{theorem}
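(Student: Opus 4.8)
The plan is to prove this classical convex-analysis fact via the standard three-ingredient route: (1) establish local uniform boundedness of the sequence $(g_i)$ (uniform in the index $i$) on compact subsets, (2) convert this into an equi-Lipschitz estimate, and (3) run an Arzel\`a--Ascoli-type argument that upgrades dense pointwise convergence to uniform convergence on compacta. Throughout I would exploit only two features of convexity: Jensen's inequality (a convex function is bounded above on a simplex by its values at the vertices) and the fact that a convex function bounded on a ball is automatically Lipschitz on a smaller concentric ball.

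First I would show that $(g_i)$ is bounded above and below, uniformly in $i$, on any compact $K \subseteq \cC$. For the upper bound, since $\cC$ is open and $\cD$ is dense I can cover $K$ by finitely many non-degenerate closed simplices whose vertices all lie in $\cD$; by convexity $g_i(x) \le \max_v g_i(v)$ over the finitely many vertices $v$ of the simplex containing $x$, and since $g_i(v)$ converges (hence is bounded in $i$) at each of these finitely many vertices, this yields a uniform upper bound $M$ on $K$. For the lower bound at a point $x$, I pick $d \in \cD$ and write it as a convex combination $d = \lambda x + (1-\lambda) w$ with $w$ in a slightly enlarged compact set $K' \subseteq \cC$; convexity gives $g_i(x) \ge \lambda^{-1}\bigl(g_i(d) - (1-\lambda) M\bigr)$, and since $g_i(d)$ converges this bounds $g_i(x)$ below uniformly in $i$.

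Next, from uniform two-sided boundedness by $M$ above and $m$ below on a ball $B(x_0, 2r) \subseteq \cC$ I would invoke the standard estimate that each $g_i$ is Lipschitz on $B(x_0, r)$ with constant at most $(M-m)/r$. Because the bounds are uniform in $i$, the family $(g_i)$ is equi-Lipschitz, hence equicontinuous, on each compact subset of $\cC$. Equicontinuity combined with convergence on the dense set $\cD$ then forces pointwise convergence everywhere on $\cC$; I denote the limit by $g$, which is convex as a pointwise limit of convex functions and finite, and is therefore continuous on the open set $\cC$.

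Finally I would establish uniform convergence on a compact $K$ by an $\varepsilon$-net argument: fix $\varepsilon > 0$, let $L$ be the common Lipschitz constant on a neighborhood of $K$, and cover $K$ by finitely many balls of radius $\delta = \varepsilon/(3L)$ centered at points $d_1,\dots,d_p$ that may be taken in $\cD$; choosing $i$ large enough that $|g_i(d_k) - g(d_k)| < \varepsilon/3$ for every $k$, the equi-Lipschitz bound controls the gap between any $x \in K$ and its nearest center, giving $|g_i(x) - g(x)| < \varepsilon$ uniformly. I expect the main obstacle to be the uniform lower bound in the first step: extracting a bound that is uniform in $i$ from mere pointwise convergence on $\cD$ hinges on arranging the convex-combination decomposition so that the auxiliary point $w$ remains inside a fixed compact subset of $\cC$, which is precisely where openness of $\cC$ and density of $\cD$ are both indispensable.
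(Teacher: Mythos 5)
Your proof is correct, but note that the paper itself contains no proof of this statement: it is quoted verbatim in the appendix ``Theorems Used in the Paper'' as Theorem~10.8 of Rockafellar's \emph{Convex Analysis} and is invoked, as a black box, only in the proof of Lemma~\ref{thm: uniformCVaR}. Measured against the cited source rather than the paper, your argument is essentially the classical one: Rockafellar likewise proceeds by first showing that pointwise boundedness on a dense subset of the open convex set forces uniform boundedness, hence a common Lipschitz constant, on compact subsets (his Theorem~10.6), and then upgrades dense pointwise convergence to uniform convergence on compacta via equicontinuity. Your three steps reproduce this chain in self-contained form, and the details check out: the simplex covering for the upper bound works because $\cC$ is convex (so a simplex with vertices in $\cD \subseteq \cC$ lies in $\cC$) and open (so the vertices can be perturbed into $\cD$ while keeping each point of $K$ in a simplex interior); the lower bound via $d = \lambda x + (1-\lambda)w$ with $w = d + \mu(d-x)$ for a small fixed $\mu$ correctly yields $g_i(x) \ge \lambda^{-1}\bigl(g_i(d) - (1-\lambda)M'\bigr)$ with $\lambda = \mu/(1+\mu)$ fixed and the auxiliary points confined to a compact subset of $\cC$, which you rightly flag as the delicate point; and the $(M-m)/r$ Lipschitz estimate plus the $\varepsilon/3$-net argument are standard. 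Two cosmetic points you could tighten in a written version: when passing from Lipschitz bounds on small balls to a single constant $L$ on (a neighborhood of) $K$, work on the convex hull $\mathrm{conv}(K)$, which is compact and contained in $\cC$, so that segments between points of $K$ stay in the region where the local estimates apply; and in the final net argument, either run a uniform Cauchy estimate $|g_i(x)-g_j(x)| \le 2L\delta + \varepsilon/3$ (avoiding reference to $g$ off $\cD$ before it is defined) or observe, as you implicitly do, that the limit $g$ inherits the Lipschitz constant $L$ so the same $\varepsilon/3$ splitting is legitimate.
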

\begin{theorem}[Theorem 7.53, \cite{shapiro_lectures_2021}]\label{thm:ULLN}
    Let $\cK$ be a nonempty compact subset of $\bR^n$ and suppose that (i) for any $x\in \cK$ the function $F(\cdot, \xi)$ is continuous at $x$ for almost every $\xi\in \Xi$, (ii) $F(x,\xi)$, $x\in \cK$, is dominated by an integrable function, and (iii) the sample is iid. Then, the expected function $f(x)$ is finite valued and continuous on $\cK$, and the sample mean $\hat f_N(x)$ converges to $f(x)$ $\wpo$ uniformly on $\cK$.
\end{theorem}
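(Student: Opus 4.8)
The plan is to prove the two assertions in turn: first that $f(x) := \bE[F(x,\xi)]$ is finite and continuous on $\cK$, and then the uniform strong law. Throughout, write $\hat f_N(x) = \frac{1}{N}\sum_{n=1}^N F(x,\xi_n)$ and let $G(\xi)$ be the integrable envelope from assumption (ii), so $|F(x,\xi)| \le G(\xi)$ for all $x \in \cK$ and almost every $\xi$, with $\bE[G(\xi)] < \infty$. Finiteness of $f$ is immediate from this domination. For continuity at a fixed $x \in \cK$, I would take an arbitrary sequence $x_k \to x$ in $\cK$; assumption (i) gives $F(x_k,\xi) \to F(x,\xi)$ for almost every $\xi$ (the null set may depend on $x$), and the common envelope $G$ then licenses the dominated convergence theorem to conclude $f(x_k) \to f(x)$. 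Since the sequence was arbitrary, $f$ is continuous on $\cK$.

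For the uniform law I would use the classical bracketing-plus-compactness argument. For $x \in \cK$ and $\delta > 0$ define the oscillation brackets
\[
\overline{F}_\delta(x,\xi) := \sup_{x' \in \cK,\, \|x'-x\| \le \delta} F(x',\xi), \qquad \underline{F}_\delta(x,\xi) := \inf_{x' \in \cK,\, \|x'-x\| \le \delta} F(x',\xi).
\]
As $\delta \downarrow 0$, continuity of $F(\cdot,\xi)$ at $x$ forces $\overline{F}_\delta(x,\xi) \downarrow F(x,\xi)$ and $\underline{F}_\delta(x,\xi) \uparrow F(x,\xi)$ for almost every $\xi$; since both brackets are dominated by $G$, dominated convergence yields $\bE[\overline{F}_\delta(x,\xi)] \downarrow f(x)$ and $\bE[\underline{F}_\delta(x,\xi)] \uparrow f(x)$. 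Fixing $\epsilon > 0$, I can therefore choose for each $x$ a radius $\delta_x > 0$ so small that $\bE[\overline{F}_{\delta_x}(x,\xi)] < f(x)+\epsilon$ and $\bE[\underline{F}_{\delta_x}(x,\xi)] > f(x)-\epsilon$, and (shrinking $\delta_x$ using continuity of $f$) also $|f(x')-f(x)| < \epsilon$ on the ball. The open balls $\{x' : \|x'-x\| < \delta_x\}$ cover the compact set $\cK$, so I extract a finite subcover centered at $x_1,\dots,x_m$.

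On the ball centered at $x_i$ the sandwich $\underline{F}_{\delta_{x_i}}(x_i,\xi_n) \le F(x',\xi_n) \le \overline{F}_{\delta_{x_i}}(x_i,\xi_n)$ holds for every point $x'$ of that ball, hence the same sandwich for the sample averages. Applying the ordinary strong law of large numbers to each of the $2m$ i.i.d.\ integrable sequences $\overline{F}_{\delta_{x_i}}(x_i,\xi_n)$ and $\underline{F}_{\delta_{x_i}}(x_i,\xi_n)$, on a single probability-one event all $2m$ sample means converge to their expectations simultaneously. On that event, for $N$ large each average is within $\epsilon$ of its limit, so for any $x' \in \cK$, choosing the covering ball $i$ that contains $x'$,
\[
\hat f_N(x') - f(x') \le \tfrac{1}{N}\sum_{n=1}^N \overline{F}_{\delta_{x_i}}(x_i,\xi_n) - f(x') \le \bE\!\left[\overline{F}_{\delta_{x_i}}(x_i,\xi)\right] + \epsilon - f(x') \le 3\epsilon,
\]
and symmetrically $\hat f_N(x') - f(x') \ge -3\epsilon$ via the lower bracket. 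Taking the supremum over $x' \in \cK$ gives $\sup_{x'\in\cK}|\hat f_N(x') - f(x')| \le 3\epsilon$ for all large $N$ w.p.1; since $\epsilon$ was arbitrary, the uniform convergence follows.

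The hard part will be the measurability of the bracket functions $\overline{F}_\delta$ and $\underline{F}_\delta$: a supremum of uncountably many measurable functions need not be measurable, so $\bE[\overline{F}_\delta(x,\xi)]$ is not a priori well defined. The remedy---and the one place where assumption (i) does genuine work beyond securing convergence---is to replace each uncountable sup/inf over the ball by the sup/inf over a fixed countable dense subset of that ball; continuity of $F(\cdot,\xi)$ at the relevant points guarantees the two coincide for almost every $\xi$, while the countable operation preserves measurability and the domination by $G$. Once measurability is secured, the dominated-convergence and strong-law steps above proceed verbatim, completing the proof.
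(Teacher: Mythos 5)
This theorem is quoted in the paper from \cite{shapiro_lectures_2021} without proof, so the comparison is with the standard textbook argument --- which your proposal reproduces faithfully in its main body: finiteness and continuity of $f$ by dominated convergence, one-sided oscillation brackets over small balls, dominated convergence of the bracket means down to $f(x)$, a finite subcover of the compact set $\cK$, the ordinary SLLN at the finitely many bracket functions, and the $3\epsilon$ sandwich. That architecture is exactly the classical one, and your sandwich arithmetic is correct (modulo the routine step of intersecting the probability-one events along $\epsilon = 1/k$, which you wave at but which is harmless).

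The genuine gap is in your measurability patch. You claim that assumption (i) lets you replace $\sup_{x' \in \cK,\,\|x'-x\|\le\delta} F(x',\xi)$ by the supremum over a countable dense subset $D$ of the ball $B$, the two coinciding for almost every $\xi$. That does not follow: (i) gives, for each fixed point $x'$, a null set $N_{x'}$ off which $F(\cdot,\xi)$ is continuous at $x'$, whereas the coincidence of the two suprema requires continuity at essentially every point of $B$ for the \emph{same} $\xi$, i.e.\ control of the uncountable union $\bigcup_{x' \in B} N_{x'}$. Concretely, take $\xi \sim \cU(0,1)$, $\cK = [0,1]$, $F(x,\xi) = \mathbf{1}\{x = \xi\}$. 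For each $x$, $F(\cdot,\xi)$ is discontinuous at $x$ only on the null event $\{\xi = x\}$, so (i) holds and $|F| \le 1$; yet $\sup_{x' \in B} F(x',\xi) = \mathbf{1}\{\xi \in B\}$ equals $1$ with probability $|B| > 0$, while $\sup_{d \in D} F(d,\xi) = 0$ almost surely. Worse, your countable bracket then fails to dominate the empirical mean: at $x' = \xi_1$ one has $\hat f_N(x') \ge 1/N > \tfrac{1}{N}\sum_{n=1}^N \sup_{d \in D} F(d,\xi_n) = 0$, so the key sandwich inequality is false as written. (The theorem's conclusion still holds in this example --- the true sup bracket has mean $2\delta \downarrow 0$ --- so the defect lies in the remedy, not in the bracketing idea.) Standard repairs: assume $F$ jointly measurable and pass to the completion of the probability space, where $\sup_{x' \in B} F(x',\cdot)$ is universally measurable by the measurable projection theorem, after which your argument runs verbatim with the genuine suprema; or carry out the whole argument with outer expectations and measurable cover functions in the empirical-process style. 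Your countable-dense-set trick is valid only under the stronger hypothesis that the null set in (i) can be chosen independently of $x$ --- e.g.\ $F(\cdot,\xi)$ continuous on all of $\cK$ for almost every $\xi$ --- which, as it happens, is the situation in the paper's actual invocation of the theorem, since Assumption \ref{asp:ContinuousConvex} makes $F(\cdot,\xi)$ convex, hence continuous, for every $\xi$; but it does not suffice for the theorem as stated.
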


\section{Experiment Parameters}
\label{sec:Experiment}
In the case study of Section \ref{sec:NumericalStudy}, we consider two distinct scenarios: a regular period with probability $p$ and a worst-case period with probability $1 - p$. The worst-case scenario, which we model as a low-probability, high-impact event (like the COVID-19 pandemic), seldom occurs but has severe consequences. During the regular period, each random variable follows a uniform distribution marginally: $h_k \sim \cU[\underline h_k^r, \overline h_k^r]$, $q_j \sim \cU[\underline q_j^r, \overline q_j^r]$, and $w_j \sim \cU[\underline w_j^r, \overline w_j^r]$ for $k \in \cK := \{1, 2\}$ and $j \in \cJ$. Their joint distribution is formulated using the Gumbel copula as 
\begin{align*}
    C(h, q, w; \lambda^r)  = &  \exp \Bigg \{ -\Bigg( \sum_{k \in \cK} \left(- \log \frac{h_k -  \underline h_k^r}{\overline h_k^r - \underline h_k^r} \right)^{\lambda^r}    + \sum_{j \in J}  \left(- \log \frac{q_j -  \underline q_j^r}{\overline q_j^r - \underline q_j^r} \right)^{\lambda^r} \\
     & \qquad + \sum_{j \in \cJ}  \left(- \log \frac{w_j -  \underline w_j^r}{\overline w_j^r - \underline w_j^r} \right)^{\lambda^r} 
    \Bigg)^{\frac{1}{\lambda^r}} \Bigg \}
\end{align*}
Similarly, for the worst-case period, we formulate the distribution of $h$, $q$, and $w$ using a Gumbel copula with different parameters: $\underline h_k^w$, $\overline h_k^w$, $\underline q_j^w$, $\overline q_j^w$, $\underline w_j^w$, $\overline w_j^w$, and $\lambda^w$.

\subsection{Deterministic Parameters}

\begin{enumerate}
    \item \textbf{Unit cost (negative profit) for all products:} \\
    The cost vector is given by:
    \[
    c = (-14, -9, -20, -15, -4, -40, -18, -11, -13, -16, -17, -8, -9, -24, -10, -7, -12, -3, -4, -5)
    \]

    \item \textbf{The amount of labor in each department required to produce components for one unit of every product:}
\setcounter{MaxMatrixCols}{20}
\[
T = 
\begin{bmatrix}
10 & 6 & 8 & 4 & 10 & 6 & 8 & 4 & 6 & 8 & 4 & 10 & 7 & 9 & 12 & 8 & 11 & 13 & 16 & 17 \\
6 & 2 & 3 & 2 & 6 & 2 & 3 & 2 & 3 & 2 & 6 & 2 & 5 & 3 & 7 & 4 & 6 & 5 & 8 & 9 \\
10 & 6 & 8 & 4 & 10 & 6 & 8 & 4 & 8 & 4 & 10 & 6 & 7 & 9 & 12 & 8 & 11 & 13 & 16 & 17 \\
6 & 2 & 3 & 2 & 6 & 2 & 3 & 2 & 2 & 6 & 2 & 3 & 5 & 3 & 7 & 4 & 6 & 5 & 8 & 9 \\
0 & 2 & 3 & 2 & 2 & 6 & 2 & 3 & 0 & 0 & 0 & 0 & 1 & 4 & 0 & 2 & 0 & 0 & 0 & 0 \\
0 & 0 & 0 & 0 & 0 & 0 & 0 & 10 & 6 & 8 & 4 & 0 & 0 & 0 & 0 & 0 & 0 & 9 & 0 & 0 \\
0 & 0 & 0 & 1 & 4 & 0 & 2 & 0 & 0 & 0 & 0 & 0 & 3 & 0 & 0 & 0 & 5 & 0 & 0 & 0 \\
6 & 8 & 4 & 6 & 0 & 0 & 0 & 0 & 0 & 4 & 6 & 8 & 4 & 10 & 7 & 0 & 0 & 0 & 0 & 0
\end{bmatrix}
\]
\end{enumerate}

\subsection{Random Parameters}

\subsubsection{Regular Period}
\begin{enumerate}
    \item $\lambda^{r} = 2.0$
    \item Probability of regular period: $p = 0.9$
    \item Show-up employee influenced by absenteeism: $h_{1} \sim \mathcal{U}[\underline{h}_{1}^{r}, \overline{h}_{1}^{r}] = [8000, 8500]$
    \item Temporary workers available in the future labor market. Without loss, we suppress the constraint on the capacity of total outsourced labor by setting a large value for $h_{2} \sim \mathcal{U}[\underline{h}_{2}^{r}, \overline{h}_{2}^{r}] = [10000, 120000]$.
    \item The labor cost of a temporary worker hired for department $j$, denoted as $\sim \mathcal{U}[\underline{q}_{j}^{r}, \overline{q}_{j}^{r}]$:
    \begin{align*}
        q_{1} &= [3, 5] & q_{2} &= [13, 16] & q_{3} &= [4, 7] & q_{4} &= [14, 17] \\
        q_{5} &= [15, 17] & q_{6} &= [4, 8] & q_{7} &= [15, 19] & q_{8} &= [18, 20]
    \end{align*}
    
    \item The efficiency of that temporary worker in department $j$, denoted as $w_{j} \sim \mathcal{U}[\underline{w}_{j}^{r}, \overline{w}_{j}^{r}]$:
    \begin{align*}
        w_{1} &= [0.8, 1] & w_{2} &= [0.8, 1] & w_{3} &= [0.9, 1] & w_{4} &= [0.8, 1] \\
        w_{5} &= [0.85, 1] & w_{6} &= [0.85, 1] & w_{7} &= [0.9, 1] & w_{8} &= [0.9, 1]
    \end{align*}
\end{enumerate}

\subsubsection{Worst-case Period}
\begin{enumerate}
    \item $\lambda^{w} = 5.0$
    \item Probability of worst-case period: $p = 0.1$
    \item Show-up employee influenced by absenteeism: $h_{1} \sim \mathcal{U}[\underline{h}_{1}^{w}, \overline{h}_{1}^{w}] = [2000, 3000]$
    \item Temporary workers available in the future labor market. Without loss, we suppress the constraint on the capacity of total outsourced labor by setting a large value for $h_{2} \sim \mathcal{U}[\underline{h}_{2}^{w}, \overline{h}_{2}^{w}] = [10000, 120000]$.
    \item The labor cost of a temporary worker hired for department $j$, denoted as $q_{j} \sim \mathcal{U}[\underline{q}_{j}^{w}, \overline{q}_{j}^{w}]$:
    \begin{align*}
        q_{1} &= [9, 12] & q_{2} &= [21, 25] & q_{3} &= [10, 12] & q_{4} &= [22, 24] \\
        q_{5} &= [18, 20] & q_{6} &= [18, 21] & q_{7} &= [18, 20] & q_{8} &= [22, 25]
    \end{align*}
    
    \item The efficiency of that temporary worker in department $j$, denoted as $w_{j} \sim \mathcal{U}[\underline{w}_{j}^{w}, \overline{w}_{j}^{w}]$:
    \begin{align*}
        w_{1} &= [0.5, 0.6] & w_{2} &= [0.5, 0.6] & w_{3} &= [0.6, 0.7] & w_{4} &= [0.4, 0.6] \\
        w_{5} &= [0.55, 0.65] & w_{6} &= [0.55, 0.65] & w_{7} &= [0.6, 0.7] & w_{8} &= [0.6, 0.7]
    \end{align*}
\end{enumerate}

\subsection{Parameters for the fixed-recourse product-mix problem}\label{apx:params-fixed-recourse}
Section~\ref{subsec:2SFR} adapts a two-stage product-mix problem with fixed recourse \citep{king1988stochastic}.
\begin{align*}
 A&=0, \quad b=0, \quad c = [-12, -20, -18, -40]^\intercal, \\
q &= [6, 12, 0, 0]^\intercal, \quad h  = [500\gamma_1, 500\gamma_2]^\intercal,  \\
T &= 
\begin{bmatrix}
 4 - \frac{\gamma_1}{4} &  9 - \frac{\gamma_1}{4} & 7 - \frac{\gamma_1}{4} &  10 - \frac{\gamma_1}{4} \\
 3 -\frac{\gamma_2}{4}   &  1 - \frac{\gamma_2}{4} &  3 - \frac{\gamma_2}{4}&  6 -\frac{\gamma_2}{4}  \\
\end{bmatrix}, \qquad
W  =
\begin{bmatrix}
-0.9 & 0 &1 & 0\\
0 & -0.9  &0 & 1\\
\end{bmatrix},
\end{align*}
where
$$[\gamma_1,\gamma_2]^\intercal \sim \frac{7}{10} \cN\left(
\begin{bmatrix}
    12 \\ 8\\
\end{bmatrix}
, \begin{bmatrix}
5.76 &  1.92\\
1.92 & 2.56 \\
\end{bmatrix}
\right) + \frac{3}{10} \cN\left(
\begin{bmatrix}
    2 \\ 1\\
\end{bmatrix}
, \begin{bmatrix}
0.16 &  0.04\\
0.04 & 0.04 \\
\end{bmatrix}
\right).$$

\subsection{Parameters for the newsvendor problem}\label{apx:newsvendor-params}
The newsvendor experiment in Section~\ref{subsec:newsvendor} uses
\begin{align*}
    F(x, \xi) = p^\intercal x + h^\intercal (x-\xi)_+ + b^\intercal (\xi-x)_+.
\end{align*}
Here $x$ is the order vector, $\xi$ is the random demand vector, and $p$, $h$, and $b$ are the unit purchase, overage, and underage costs.

\begin{equation*}
    p = -2, \qquad h = 9, \qquad b = 5.
\end{equation*}
\begin{equation*}
\mu_1 = [60.89, 48.58, 46.81, 56.54, 61.58, 52.69, 69.42, 60.54, 54.43, 51.76]^\intercal
\end{equation*}
\begin{equation*}
\mu_2 = [50.30, 61.87, 53.16, 41.79, 51.94, 62.14, 45.47, 45.26, 55.95, 55.95]^\intercal
\end{equation*}
\begin{equation*}
\Sigma_1 = \begin{bmatrix}
9.27 & 2.84 & -0.07 & 1.19 & -0.48 & 1.40 & 2.87 & 4.06 & -1.40 & -1.96 \\
2.84 & 5.90 & -2.83 & 0.21 & 2.27 & -2.40 & -0.89 & 4.22 & 3.43 & 2.78 \\
-0.07 & -2.83 & 5.48 & -0.30 & 0.90 & 3.54 & -4.51 & -2.45 & -2.91 & -4.95 \\
1.19 & 0.21 & -0.30 & 7.99 & -1.02 & -1.27 & -0.15 & -1.55 & -1.69 & -0.36 \\
-0.48 & 2.27 & 0.90 & -1.02 & 9.48 & -0.08 & -3.69 & 2.71 & -0.69 & -0.34 \\
1.40 & -2.40 & 3.54 & -1.27 & -0.08 & 6.94 & -1.26 & -2.73 & 0.01 & -5.19 \\
2.87 & -0.89 & -4.51 & -0.15 & -3.69 & -1.26 & 12.05 & -0.16 & -0.16 & 2.44 \\
4.06 & 4.22 & -2.45 & -1.55 & 2.71 & -2.73 & -0.16 & 9.16 & -0.77 & 1.94 \\
-1.40 & 3.43 & -2.91 & -1.69 & -0.69 & 0.01 & -0.16 & -0.77 & 7.41 & 2.24 \\
-1.96 & 2.78 & -4.95 & -0.36 & -0.34 & -5.19 & 2.44 & 1.94 & 2.24 & 6.70
\end{bmatrix}
\end{equation*}
\begin{equation*}
\Sigma_2 = \begin{bmatrix}
6.32 & 2.99 & -0.06 & 0.73 & -0.33 & 1.36 & 1.55 & 2.51 & -1.19 & -1.75 \\
2.99 & 9.57 & -4.09 & 0.19 & 2.44 & -3.60 & -0.74 & 4.02 & 4.49 & 3.83 \\
-0.06 & -4.09 & 7.06 & -0.25 & 0.86 & 4.74 & -3.35 & -2.08 & -3.40 & -6.08 \\
0.73 & 0.19 & -0.25 & 4.37 & -0.64 & -1.11 & -0.07 & -0.86 & -1.29 & -0.29 \\
-0.33 & 2.44 & 0.86 & -0.64 & 6.74 & -0.08 & -2.04 & 1.71 & -0.60 & -0.31 \\
1.36 & -3.60 & 4.74 & -1.11 & -0.08 & 9.65 & -0.98 & -2.41 & 0.01 & -6.62 \\
1.55 & -0.74 & -3.35 & -0.07 & -2.04 & -0.98 & 5.17 & -0.08 & -0.10 & 1.72 \\
2.51 & 4.02 & -2.08 & -0.86 & 1.71 & -2.41 & -0.08 & 5.12 & -0.59 & 1.57 \\
-1.19 & 4.49 & -3.40 & -1.29 & -0.60 & 0.01 & -0.10 & -0.59 & 7.83 & 2.49 \\
-1.75 & 3.83 & -6.08 & -0.29 & -0.31 & -6.62 & 1.72 & 1.57 & 2.49 & 7.83
\end{bmatrix}
\end{equation*}
\begin{equation*}
\varepsilon \sim  \begin{bmatrix}
        \cU(-5.37, 26.27) \\
        \cU(6.74, 14.16) \\
        \cU(3.22, 17.68) \\
        \cU(-7.48, 28.38) \\
        \cU(-4.89, 25.79) \\
        \cU(-0.21, 16.11) \\
        \cU(-12.14, 32.99) \\
        \cU(-7.74, 28.64) \\
        \cU(0.77, 20.13) \\
        \cU(2.13, 18.77) \\
    \end{bmatrix}.
\end{equation*}


\end{document}